\numberwithin{equation}{section}
\theoremstyle{plain}
\newtheorem{theorem}[equation]{Theorem}
\newtheorem{proposition}[equation]{Proposition}
\newtheorem{corollary}[equation]{Corollary}
\newtheorem{lemma}[equation]{Lemma}
\theoremstyle{definition}
\newtheorem{definition}[equation]{Definition}
\newtheorem{example}[equation]{Example}
\newtheorem{remark}[equation]{Remark}
\newcommand{\hdet}{\operatorname{hdet}}
\newcommand{\beq}{\begin{equation}}
\newcommand{\eeq}{\end{equation}}
\DeclareMathOperator{\Hom}{{Hom}}
\DeclareMathOperator{\End}{{End}}
\DeclareMathOperator{\Ext}{{Ext}}
\DeclareMathOperator{\Tor}{Tor}
\DeclareMathOperator{\Ker}{Ker}
\DeclareMathOperator{\coker}{Coker}
\DeclareMathOperator{\im}{Im}
\DeclareMathOperator{\GK}{GKdim}
\DeclareMathOperator{\soc}{soc}
\DeclareMathOperator{\gldim}{gl.dim}
\newcommand{\mc}{\mathcal}
\newcommand{\Z}{\mathbb{Z}}
\newcommand{\ZZ}{{\mathbb Z}}
\newcommand{\QQ}{{\mathbb Q}}
\newcommand{\mb}{\mathbb}
\newcommand{\ol}{\overline}
\newenvironment{psmatrix}
  {\left(\begin{smallmatrix}}
  {\end{smallmatrix}\right)}
\DeclareMathOperator{\lMod}{\!-Mod}
\DeclareMathOperator{\GKdim}{GKdim}
\newcommand{\diag}{\operatorname{diag}}
\newcommand{\lra}{\longrightarrow}
\newcommand{\del}{\partial}
\newcommand{\on}{\operatorname}
\newcommand{\wh}{\widehat}
\newcommand{\lsoc}{\soc_l}
\newcommand{\rsoc}{\soc_r}
 \newcommand{\C}{\mathbb{C}}
 \newcommand{\M}{\mathbb{M}}
 \newcommand{\N}{\mathbb{N}}
 \newcommand{\op}{^{\mathrm{op}}}
 \newcommand{\tot}{^{\mathrm{tot}}}
\newcommand{\eps}{\epsilon}
\begin{document}
\title{Growth of graded twisted Calabi-Yau algebras}

\author{Manuel L. Reyes}
\address{Department of Mathematics\\ Bowdoin College\\
8600 College Station\\ Brunswick, ME 04011-8486\\ USA}
\email{reyes@bowdoin.edu}

\author{Daniel Rogalski}
\address{UCSD\\ Department of Mathematics\\ 9500 Gilman Dr. \# 0112 \\
La Jolla, CA 92093-0112\\ USA}
\email{drogalsk@math.ucsd.edu}

\date{\today}

\thanks{Reyes was partially supported by the NSF grant DMS-1407152.
Rogalski was partially supported by the NSF grant
DMS-1201572 and the NSA grant H98230-15-1-0317.}

\subjclass[2010]{
Primary:
16E65, %(2000-now) Homological conditions on rings (generalizations of regular, Gorenstein, Cohen-Macaulay rings, etc.)
16P90, %(1991-now) Growth rate, Gelfand-Kirillov dimension
16S38, %(2000-now) Rings arising from non-commutative algebraic geometry
16W50} %(1991-now) Graded rings and modules

\keywords{twisted Calabi Yau algebra, AS regular algebra, mesh relations, GK dimension, matrix-valued Hilbert series,
derivation-quotient algebra, superpotential}

\begin{abstract}
We initiate a study of the growth and matrix-valued Hilbert series of $\N$-graded twisted Calabi-Yau algebras that are homomorphic images of path algebras of weighted quivers, generalizing techniques previously used to investigate Artin-Schelter regular algebras and graded Calabi-Yau algebras. Several results are proved without imposing any assumptions on the degrees of generators or relations of the algebras.
We give particular attention to twisted Calabi-Yau algebras of dimension $d \leq 3$, giving precise
descriptions of their matrix-valued Hilbert series and partial results describing which underlying quivers
yield algebras of finite GK-dimension.
For $d = 2$, we show that these are algebras with mesh relations.
For $d = 3$, we show that the resulting algebras are a kind of derivation-quotient algebra arising from
an element that is similar to a twisted superpotential.
\end{abstract}

 \maketitle

\setcounter{tocdepth}{1} %Include sections but not subsections in table of contents
 \tableofcontents

\section{Introduction}
\label{sec:intro}

Twisted Calabi-Yau algebras, whose definition will be recalled in Section~\ref{sec:CY} below,
form a common generalization of two classes of algebras that are each important in their own right.
The first is the class of Calabi-Yau algebras as defined by Ginzburg~\cite{Gi}, which have developed into
a prominent focus of research in the past several years; the second is the class of Artin-Schelter (AS)
regular algebras~\cite{AS}, which are graded algebras that have long been viewed as an important
type of ``noncommutative polynomial algebra.''
The aim of this paper is to study the growth of $\mb{N}$-graded twisted Calabi-Yau algebras.
Below, we let $k$ denote an arbitrary field; a \emph{graded algebra} always means an $\mb{N}$-graded $k$-algebra.
A graded algebra $A$ is \emph{locally finite} if $\dim_k A_n < \infty$ for all $n \geq 0$ and is
\emph{connected} if $A_0 = k$.

While Calabi-Yau and twisted Calabi-Yau algebras need not be graded, the more tractable subclass
of graded (twisted) Calabi-Yau algebras have been the subject of serious interest for quite some time~\cite{Bo, BSW}.
Let us discuss an important special case that has motivated much of the theory.
Let $Q$ be a finite quiver (possibly with loops and multiple edges).  Our convention is to compose arrows in the path algebra $kQ$ from left to right.  Let $\mu$ be an automorphism of $kQ$.  A \emph{$\mu$-twisted superpotential} $\omega$ on $Q$ is a linear combination of paths of length $d$ which is invariant under the linear map sending each path $\alpha_1 \alpha_2 \cdots \alpha_d$ to $(-1)^{d+1} \mu(\alpha_d) \alpha_1 \alpha_2 \cdots \alpha_{d-1}$.  For each arrow $\alpha$ there is a linear partial derivative map $\del_{\alpha}$ sending the path $\alpha_1 \alpha_2 \cdots \alpha_d$ to $\alpha_2 \cdots \alpha_d$ if $\alpha_1 = \alpha$ and to $0$ otherwise.   Given a $\mu$-twisted superpotential $\omega$ and some $k \geq 0$ the corresponding \emph{derivation-quotient algebra} is $A = \mc{D}(\omega, k) = kQ/I$, where $I$ is generated by all order $k$ mixed partial derivatives of $\omega$ (when $k = 0$, $\omega$ itself generates $I$).  In nice cases, such an algebra $A$ is twisted Calabi-Yau of global dimension $k+2$.  Moreover, it has been proved that under certain restrictions, such as the $m$-Koszul property, twisted Calabi-Yau algebras must arise from twisted superpotentials in this way~\cite{BSW}.  The reader can find detailed examples in \cite{Bo} and \cite{BSW}.
In the case of connected graded algebras (so that the quiver $Q$ has a single vertex), recent work~\cite{MoSm, MoU} has even focused
on classifying those superpotentials whose derivation-quotient algebras are Calabi-Yau of dimension~3.

Our point of view in this paper, and a companion paper \cite{RR1} which has many related results,
is more general.  Rather than considering the special properties of algebras arising from potentials on quivers,
we aim to study twisted Calabi-Yau locally finite graded $k$-algebras with no further restrictions on the
base field or the degrees of generators and relations.  Twisted Calabi-Yau derivation-quotient algebras are of course a special
case, being naturally graded by path length (since we only considered homogeneous potentials).

The paper \cite{RR1} establishes many basic results about locally finite graded twisted Calabi-Yau algebras, including the stability of the twisted Calabi-Yau property under many common constructions such as tensor product, base field extension, and so on.
In particular, we show that the twisted Calabi-Yau property is equivalent to various other formulations we call
 \emph{generalized Artin-Schelter regular}; see \cite[Theorem 5.2, Theorem 5.15]{RR1} and Theorem~\ref{thm:cy-vers-asreg} below.
This result generalizes a known theorem that for locally finite connected graded algebras, twisted Calabi-Yau and Artin-Schelter regular
are equivalent concepts \cite[Theorem 1.2]{RRZ1}.   We also offer in \cite{RR1} a classification of twisted Calabi-Yau algebras of the smallest
dimensions $0$ and $1$, and study the noetherian property for twisted Calabi-Yau algebras of dimension $2$.

This paper will focus on results about the growth of graded locally finite twisted Calabi-Yau algebras.  We assume here that the reader is familiar with the basic ideas of growth of algebras and Gelfand-Kirillov dimension (GK-dimension) as in \cite{KL}; we will briefly review the definitions in Section~\ref{sec:growth}.  In fact, the original definition of AS regularity for a connected graded algebra $A$ included the additional condition that $\GKdim(A) < \infty$.  Under this condition, AS regular algebras are conjectured to be noetherian domains, while AS regular algebras with $\GKdim(A) = \infty$ have exponential growth, and are not noetherian \cite[Theorem 1.2]{SZ}.     We expect a similar dichotomy to hold for locally finite graded twisted Calabi-Yau algebras:  the ones with finite GK-dimension should be much more well-behaved.  Surprisingly, however, little attention seems to have been given to the growth of twisted Calabi-Yau algebras, so we hope to begin to remedy that with this paper.  We start with a general study of the growth of locally finite graded twisted Calabi-Yau algebras (in fact many of our results hold under weaker hypotheses).  Then in the second half of the paper, we prove some basic structural results for locally finite graded twisted Calabi-Yau algebras of dimension $2$ and $3$.  In particular we examine how one can understand which of these have finite GK-dimension.

We now give an overview of the results of the paper.   Let $A$ be a locally finite graded algebra.  The main tool for investigating the growth of a finitely graded twisted Calabi-Yau algebra $A$ is its matrix-valued Hilbert series.  We fix a decomposition $1 = e_1 + e_2 + \cdots + e_n$ of $1$ as a sum of primitive orthogonal idempotents $e_i \in A_0$.  The matrix-valued Hilbert series is the formal series $h_A(t) = \sum_{n \geq 0} [\dim_k (e_i A_n e_j)] t^n$ in $M_n(\mb{Z})[[t]]$.
In Section~\ref{sec:growth}, we generalize some standard results about the Hilbert series of connected graded algebras to the matrix Hilbert series of locally finite graded algebras.    In particular, when the matrix Hilbert series is \emph{rational} of the form
$q(t)^{-1}p(t)$ for matrix polynomials $p(t), q(t) \in M_n(\mb{Z}[t])$, then we show that the growth of $A$ is controlled by the roots of $\det q(t)$ (Proposition~\ref{prop:growth}).

Let $J(A)$ be the graded Jacobson radical of a locally finite graded algebra $A$ and let $S = A/J(A)$, which is a finite-dimensional semisimple algebra.  We focus for most of the paper on \emph{elementary}
algebras, for which $S \cong k^n$  for some $n$.  This is justified in Theorem~\ref{thm:reduce-elem} and Lemma~\ref{lem:cy-reduce-elem}, where we show via standard techniques that one can reduce to the elementary case by extending the base field and passing to a Morita equivalent algebra, both of which preserve the twisted Calabi-Yau property and the growth of an algebra. In the elementary case, $A$ has a presentation $A = kQ/I$ for some uniquely determined \emph{weighted} quiver $Q$, where the arrows are assigned nonnegative degrees (see Lemma~\ref{lem:quiver facts}).

Weighted arrows complicate the theory significantly.  Any graded twisted Calabi-Yau algebra has a \emph{Nakayama bimodule} $U_A$
which is graded invertible $(A, A)$-bimodule (See Definition~\ref{def:graded twisted CY}).  Thus it is important to understand what graded invertible bimodules over an elementary locally finite graded algebra can look like.  We work out the surprisingly complicated answer in
Section~\ref{sec:bimodule}.  Namely, any such graded invertible bimodule $U$ has the form $\bigoplus_{i=1}^n Ae_i(\ell_i)$ as a left module, so $U \cong A$ as \emph{ungraded} left modules, but the different indecomposable projectives making up $A$ may have different graded shifts; moreover $U \cong {}^1 A ^{\mu}$ as a graded bimodule, where the right action is given by some automorphism $\mu: A \to A$ which may not preserve degrees in $A$! When $U = U_A$ is the Nakayama bimodule of an elementary locally finite graded twisted Calabi-Yau algebra $A$, we call $\mu$ the \emph{Nakayama automorphism} of $A$ and the vector $(\ell_1, \dots, \ell_n)$ the \emph{Artin-Schelter index}.  It was a surprise to find that a graded twisted Calabi-Yau algebra might have a Nakayama automorphism that is not a graded automorphism.  When $A$ is indecomposable and the underlying quiver $Q$ has arrows of weight $1$, then
such strange behavior does not happen:  all the $\ell_i$ are equal and $\mu$ is graded.
Allowing for generators in arbitrary degrees is important, for example, if one is interested in invariant theory of graded twisted Calabi-Yau algebras, since invariant rings tend to have generators in high degree.  Some studies of noncommutative invariant theory in this context have recently appeared in~\cite{Weis1}, \cite{Weis2}.

We now state our main theorems.  First, we have a general result on the structure of the matrix Hilbert series of an elementary graded twisted Calabi-Yau algebra.  Given a vector of integers $(\ell_1, \dots, \ell_n)$, we will form the diagonal matrix $L = \diag(\ell_1, \dots, \ell_n)$ and use the matrix exponential to denote the following matrix Laurent polynomial:
\[
t^L = \exp(\log(t) L) =
\begin{pmatrix}
t^{\ell_1} & \cdots & 0 \\
\vdots & \ddots & \vdots\\
0 &  \cdots & t^\ell_n
\end{pmatrix}.
\]

\begin{theorem}
\label{thm:main1}
(Propositions~\ref{prop:growth}, \ref{prop:perfect-hs}, \ref{prop:cygk}, and~\ref{prop:exact})
Let $A$ be a locally finite elementary graded twisted Calabi-Yau algebra with Nakayama automorphism $\mu$ and AS-index $(\ell_1, \dots, \ell_n)$.  Let $P$ be the permutation matrix associated to the action of $\mu$ on the idempotents $e_i$, and
let $L = \diag(\ell_1, \dots, \ell_n)$ and $t^L$ be as above.
  \begin{enumerate}
\item The matrix Hilbert series of $A$ is of the form $h_A(t) = q(t)^{-1}$ for some matrix polynomial $q(t)$.
$\GKdim(A) < \infty$ if and only if all roots of $\det q(t)$ are roots of unity, and otherwise $A$ has exponential growth.
When $\GKdim(A)$ is finite it is bounded above by the multiplicity of $1$ as a root of $\det q(t)$.
\item $q(t)$ satisfies the functional equation $q(t) = (-1)^d P t^L q(t^{-1})^T$, and $q(t)$ commutes with $P t^L$.
\item When $\GKdim(A) < \infty$ and $A$ is noetherian, then GK-dimension for $A$-modules is graded exact and graded finitely partitive.
\end{enumerate}
\end{theorem}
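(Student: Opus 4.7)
The plan is to leverage the finite minimal graded projective resolution of the semisimple quotient $S = A/J(A) \cong k^n$ guaranteed by the twisted Calabi-Yau hypothesis in the elementary case. For part~(1), I would choose a minimal resolution
\[
0 \to P_d \to P_{d-1} \to \cdots \to P_0 \to S \to 0,
\]
in which each $P_i$ is a finite direct sum of graded shifts $Ae_j(-s)$ of indecomposable projectives. Encoding the graded multiplicities of these shifts as a matrix polynomial $M_i(t) \in M_n(\Z[t])$ and using additivity of matrix Hilbert series on exact sequences, one obtains $I_n = h_S(t) = q(t)\, h_A(t)$ with $q(t) = \sum_{i=0}^d (-1)^i M_i(t)$, proving $h_A(t) = q(t)^{-1}$. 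For the growth dichotomy, writing $h_A(t) = \operatorname{adj}(q(t))/\det q(t)$ shows that every entry has denominator dividing the integer polynomial $\det q(t)$. By Kronecker's theorem, polynomial growth of the coefficients forces all roots of $\det q(t)$ to lie on the unit circle and hence to be roots of unity; conversely any root strictly inside the unit disk produces exponential growth in some entry. Expanding $1/\det q(t)$ in partial fractions at roots of unity then yields the bound $\GKdim(A) \leq$ multiplicity of $1$ as a root of $\det q(t)$.

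For part~(2), I would dualize the resolution via $\RHom_A(-,A)$. The twisted Calabi-Yau condition implies $\Ext^i_A(S,A) = 0$ for $i \neq d$ and $\Ext^d_A(S,A) \cong S \otimes_A U_A$, where $U_A \cong {}^1\!A^\mu$ is the Nakayama bimodule; the right action of $\mu$ combined with the isomorphism $U_A \cong \bigoplus Ae_i(\ell_i)$ of left modules produces a permutation of the idempotents encoded by $P$ and a set of graded shifts encoded by $L$. Dualizing sends each $Ae_j(-s)$ to $e_j A(s)$, so the shifted dual complex is again a minimal resolution of $S$ (on the opposite side), and the resulting alternating-sum identity becomes $q(t) = (-1)^d P t^L q(t^{-1})^T$. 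Substituting this equation into itself produces $q(t) = (Pt^L)\, q(t)\, (Pt^L)^{-1}$, which is the claimed commutation.

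For part~(3), assuming $A$ noetherian with $h_A(t) = q(t)^{-1}$ rational and all poles at roots of unity, I would follow the Stephenson--Zhang / Artin--Zhang strategy: every finitely generated graded left $A$-module $M$ has a rational matrix Hilbert series of the form $p_M(t)\, q(t)^{-1}$ (from any finite free presentation), and $\GKdim(M)$ equals the order of the pole of $h_M(t)$ at $t = 1$. Exactness of GK-dimension on a short exact sequence $0 \to M' \to M \to M'' \to 0$ then follows from the additivity $h_M = h_{M'} + h_{M''}$ combined with entrywise nonnegativity of the leading Laurent coefficients at $t = 1$. Finite partitivity is established by induction on GK-dimension, using that the leading coefficient of the pole at $t = 1$ is a positive rational whose denominator is uniformly bounded, forcing any descending chain of submodules of the same GK-dimension to terminate.

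The main obstacle will be the functional equation of part~(2): the careful bookkeeping of the graded shifts $L$, the permutation matrix $P$, the transpose, and the sign $(-1)^d$ must reconcile the fact that the Nakayama automorphism $\mu$ need not preserve the grading with the existence of a well-defined matrix-valued identity, and this relies crucially on the precise structure of graded invertible bimodules worked out in Section~\ref{sec:bimodule}.
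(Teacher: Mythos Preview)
Your proposal is correct and follows essentially the same route as the paper: part~(1) is Proposition~\ref{prop:perfect-hs} together with Proposition~\ref{prop:growth} and Lemma~\ref{lem:rational series}, part~(2) is Proposition~\ref{prop:cygk} (the paper uses the Serre-type duality $\Ext^i_A(Se_r,Se_j)^* \cong \Ext^{d-i}_A(U\otimes_A Se_j, Se_r)$ between simples from \cite[Corollary~4.13]{RR1} rather than your $\RHom_A(-,A)$-dual, but these are equivalent via Theorem~\ref{thm:cy-vers-asreg} and Tor-balancing), and part~(3) is Proposition~\ref{prop:exact}. One small slip in part~(3): you need a finite projective \emph{resolution}, not merely a presentation, to obtain $h_M(t) = q(t)^{-1} p_M(t)$; this is precisely where the noetherian hypothesis and the finite global dimension combine to make every finitely generated graded module perfect.
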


In Section~\ref{sec:dim2} we specialize to the case of twisted Calabi-Yau algebras of dimension $2$,
proving the following results.   For any elementary locally finite graded algebra with underlying weighted quiver $Q$, we let
$N(t) = \sum_{m \geq 0} H_m t^m$ be the weighted incidence matrix of $Q$, where $(H_m)_{ij}$ is the number of
arrows of weight $m$ from $i$ to $j$.

\begin{theorem}
\label{thm:main2}  (Proposition~\ref{prop:gldim2}, Theorem~\ref{thm:GK2})
Let $A$ be a locally finite elementary graded twisted Calabi-Yau algebra of dimension $2$, and keep the notation of Theorem~\ref{thm:main1}.
\begin{enumerate}
\item The matrix Hilbert series of $A$ is $h_A(t) = (I - N(t) + P t^L)^{-1}$.
\item $A \cong kQ/(\omega)$ for an element $\omega = \sum_i \tau(x_i) x_i$, where $\{ x_i \}$ runs over the arrows of $Q$,
and where $\{ \tau(x_i) \}$ is a basis for an arrow space in $kQ$ (as in Definition~\ref{def:arrow space}).  Here, if $x_i$ is an arrow of degree $d$ from $r$ to $j$ then $\tau(x_i)$ is an arrow of degree $\ell_j - d$ from $\mu^{-1}(j)$ to $r$.
\item If all arrows of $Q$ have weight $1$, so that $N = Mt$ where $M = N(1)$ is the usual incidence matrix of $Q$,
then $M$ has spectral radius $\rho(M) \geq 2$.  Moreover, $\GKdim(A) < \infty$ if and only $\rho(M) = 2$, and in this case
$\GKdim(A) = 2$.
\end{enumerate}
\end{theorem}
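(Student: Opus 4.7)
The plan is to exploit the minimal graded projective resolution of the semisimple quotient $S = A/J(A)$. Since $A$ is elementary and twisted Calabi-Yau of dimension~$2$, the left module $S$ admits a minimal graded projective resolution of length~$2$:
\[
0 \to F_2 \to F_1 \to F_0 \to S \to 0.
\]
Here $F_0 = A$, so its matrix Hilbert series is $h_A(t) \cdot I$. The module $F_1$ is determined by the generators of $A$: for $A = kQ/I$, each arrow $\alpha: r \to j$ of degree $d$ contributes an indecomposable summand $Ae_r(-d)$ to the projective cover of the kernel of $F_0 \to S_j$, and assembling these by target vertex yields matrix Hilbert series $h_A(t) \cdot N(t)$. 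Finally, the twisted Calabi-Yau duality forces $F_2$ to be a $\mu$-twisted shift of $A$, whose $j$-th column is the single summand $Ae_{\mu^{-1}(j)}(-\ell_j)$; this gives matrix Hilbert series $h_A(t) \cdot Pt^L$. The Euler characteristic identity $h_{F_0} - h_{F_1} + h_{F_2} = I$ then yields $h_A(t)(I - N(t) + Pt^L) = I$, proving Part~(1).

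For Part~(2), the form of $Pt^L$ shows that $F_2$ has exactly one summand per vertex, so the defining ideal $I$ of $A = kQ/I$ can be encoded by a single element $\omega \in kQ$ whose component at vertex $j$ has degree $\ell_j$. Analyzing the boundary map $F_2 \to F_1$ through the twisted Calabi-Yau self-duality of the resolution identifies this component as $\sum \tau(x_i) x_i$, summed over arrows $x_i$ with target $j$, where $\{\tau(x_i)\}$ is a basis of the arrow space dual to $\{x_i\}$. Degree-counting shows that if $x_i: r \to j$ has degree $d$, then $\tau(x_i)$ must be an arrow from $\mu^{-1}(j)$ to $r$ of degree $\ell_j - d$, so that $\tau(x_i) x_i \in e_{\mu^{-1}(j)} A_{\ell_j} e_j$ as claimed.

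For Part~(3), since all arrows of $Q$ have weight~$1$, the degree balance $\deg(\tau(x_i)) + \deg(x_i) = \ell_j$ forces $\ell_j = 2$ for every $j$; hence $L = 2I$ and $q(t) = I - Mt + Pt^2$, while the functional equation of Theorem~\ref{thm:main1}(2) specializes to $M = PM^T$. For the lower bound $\rho(M) \geq 2$: since $A$ is infinite-dimensional (as any twisted Calabi-Yau algebra of positive dimension with generators must be), the entries of $h_A(t) = q(t)^{-1}$ are non-negative integers not eventually zero, so the radius of convergence of $h_A$ is at most $1$; equivalently, $\det q(t)$ has some root in the closed unit disc. A Perron-Frobenius analysis of $M$, together with the compatibility $M = PM^T$, then shows that such a root can exist only when $\rho(M) \geq 2$. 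For the biconditional, Theorem~\ref{thm:main1}(1) reduces $\GKdim(A) < \infty$ to cyclotomicity of all roots of $\det q(t)$. If $\rho(M) > 2$, the Perron eigenvalue produces a factor $1 - \lambda t + pt^2$ of $\det q(t)$ with $|\lambda| > 2$, whose two roots have distinct moduli and include a real root strictly inside the unit disc; this root cannot be a root of unity. If $\rho(M) = 2$, then a classification of non-negative integer matrices of spectral radius~$2$ (affine Dynkin type data), combined with $M = PM^T$, identifies each factor of $\det q(t)$ as cyclotomic. In this case $(1-t)^2$ divides $\det q(t)$ from the Perron eigenvalue~$2$, so Theorem~\ref{thm:main1}(1) gives $\GKdim(A) \leq 2$, while the quadratic pole of $h_A(t)$ at $t = 1$ simultaneously forces $\GKdim(A) \geq 2$.

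The main obstacle is the spectral radius lower bound $\rho(M) \geq 2$, which requires combining the non-negativity of entries of $h_A(t)$ with the Nakayama compatibility $M = PM^T$ via a Perron-Frobenius argument (likely on an auxiliary matrix such as $P^{-1}M$) to promote the existence of a root of $\det q(t)$ in the closed unit disc into an eigenvalue bound on $M$. A secondary subtlety is the cyclotomic factorization of $\det q(t)$ when $\rho(M) = 2$, which ultimately appeals to the classification of non-negative integer matrices of small spectral radius.
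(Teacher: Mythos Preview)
Your treatment of Parts~(1) and~(2) is essentially the paper's argument: read the shape of the minimal resolution of each $Se_r$ from the generalized AS~regular condition, identify the last term as $Ae_{\mu^{-1}(r)}(-\ell_r)$, and then dualize to see that the $\tau(x_i)$ span an arrow space. That is fine.

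Part~(3), however, has real gaps, and the paper's route is quite different from yours.

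\textbf{You are missing the key structural fact: $M$ is normal.} From the functional equation one gets $M = PM^T$ \emph{and} that $P$ commutes with $M$; together these give $MM^T = (PM^T)M^T = M^T(PM^T) = M^TM$. Once $M$ is normal and commutes with $P$, you can simultaneously unitarily diagonalize $M$ and $P$, and $q(t) = I - Mt + Pt^2$ becomes $\operatorname{diag}(f_i(t))$ with $f_i(t) = 1 - \delta_i t + \zeta_i t^2$. A change of variable $u = \sqrt{\zeta_i}\,t$ reduces each factor to $1 - au + u^2$ with $a$ \emph{real} (this uses $\delta_i = \zeta_i \overline{\delta_i}$, another consequence of $M = PM^T$), and then it is elementary that the roots lie on the unit circle iff $|a| \le 2$. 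This single computation gives the equivalence ``$\GKdim(A) < \infty \iff \rho(M) \le 2$'' directly, with no appeal to any Dynkin-type classification.

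\textbf{Your lower bound argument does not work as stated.} You claim that a root of $\det q(t)$ in the closed unit disc forces $\rho(M) \ge 2$. But the diagonalized computation above shows that if one had $\rho(M) < 2$, every $f_i$ would have both roots \emph{on} the unit circle, hence in the closed disc; so the existence of such a root is not obstructed by $\rho(M) < 2$. The paper instead proves $\rho(M) \ge 2$ by a multiplicity argument: assuming $\GKdim(A) < \infty$, the positive integers $d_i = \eps(Ae_i)/|\eps(\det q)^{-1}|$ form a vector $v$ with $Pv = v$ (using $h_{Ae_{\mu(i)}}\tot = h_{Ae_i}\tot$) and $M^Tv = 2v$ (using additivity of $\eps$ on the minimal resolution of $Se_i$ and Lemma~\ref{lem:strongconnected}(2) to know all $Ae_i$ have the same GK-dimension). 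Since $v$ has strictly positive entries, Perron--Frobenius forces $\rho(M) = 2$. The lower bound $\rho(M) \ge 2$ then follows because in the remaining case $\GKdim(A) = \infty$, the diagonalization already gives $\rho(M) > 2$.

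\textbf{Your appeal to an affine Dynkin classification is not available here.} The matrix $M$ is not symmetric in general (only $M = PM^T$), and there is no off-the-shelf classification of nonnegative integer matrices with $\rho(M) = 2$ in this generality that would hand you cyclotomicity of $\det q(t)$. The normality/diagonalization argument bypasses this entirely.

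Finally, once one knows $2$ is an eigenvalue of $M$ with $Pv = v$, one of the diagonal factors is exactly $(1-t)^2$, and since every $f_i$ is quadratic the maximal pole order at $t=1$ among the entries of $q(t)^{-1}$ is~$2$; this is how the paper pins down $\GKdim(A) = 2$.
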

\noindent In \cite{RR1}, we prove in addition that any locally finite (not necessarily elementary) graded twisted Calabi-Yau algebra $A$ of dimension $2$ is noetherian if and only $\GKdim(A) < \infty$ \cite[Theorem 6.6]{RR1}.  In this paper we also prove a subsidiary
result about the GK-dimension for modules over twisted Calabi-Yau algebras of dimension $2$ (Proposition~\ref{prop:forRR1}) which is needed to complete the proof in \cite{RR1}.

Finally, in Section~\ref{sec:dim3} we consider twisted Calabi-Yau algebras of dimension~$3$.

\begin{theorem}
\label{thm:main4} (Proposition~\ref{prop:gldim3}, Proposition~\ref{prop:diagonalize})
Let $A$ be a locally finite elementary graded twisted Calabi-Yau algebra of dimension $3$, and keep the notation of the earlier theorems.
\begin{enumerate}
\item The matrix Hilbert series of $A$ is $h_A(t) = (I - N(t) + P t^L N(t^{-1})^T  - P t^L)^{-1}$.
\item There is an element $\omega = \sum_{i,j} y_i g_{ij} x_j$, where $\{ x_j \}$ runs over the arrows of $Q$
and $\{y_i\}$ is a basis for an arrow space in $kQ$, such that if $x_i$ is an arrow of degree $d$ from $r$ to $j$ then $y_i$ is an arrow of degree $\ell_j - d$ from $\mu^{-1}(j)$ to $r$, and if $h_i = \sum_j g_{ij} x_j$ and $h'_j = \sum_i y_i g_{ij}$ then
both $\{ h_i \}$ and $\{ h'_j \}$ are minimal sets of generators for the ideal $I$, where $A = kQ/I$.
\item Suppose that $A$ is indecomposable and that all arrows of $Q$ have degree $1$, so that $N = Mt$ for the usual incidence matrix $M$ of $Q$.  Suppose that $3 \leq \GKdim(A)$.  If in addition $M$ is
a normal matrix, then we can give an explicit condition on the eigenvalues of $M$ (involving some special regions of the complex plane) that characterizes $\GKdim(A) < \infty$.  In particular, if $\GKdim(A) < \infty$ then $\GKdim(A) = 3$, the $\ell_i$ are equal to a single AS-index $\ell$ where $\ell =3$ or $\ell = 4$, and the spectral radius $\rho = \rho(M)$ satisfies $\rho = 6- \ell$.
\end{enumerate}
\end{theorem}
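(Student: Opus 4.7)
The plan is to deduce all three parts from the structure of the minimal projective resolution of the trivial module $S = A/J(A)$, combined with the twisted Calabi-Yau duality and the functional equation established in Theorem~\ref{thm:main1}.

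For part~(1), I would build the minimal graded projective resolution of $S$ over $A$. Since $A$ is elementary with weighted quiver $Q$, the start of the resolution reads $P_1 \to P_0 \to S \to 0$ with $P_0 = A$ (contributing $I$ to the Euler characteristic of the Hilbert series) and $P_1 = \bigoplus A e_r(-d)$ indexed by arrows of weight $d$ from $r$ to $j$, contributing $N(t)$. Because $A$ is twisted Calabi-Yau of dimension~3, applying $\RHom_A(-,A)$ to the resolution yields, up to a shift by the Nakayama bimodule, the minimal projective resolution of the right module $S$; thus $P_2$ is the dual of $P_1$ twisted by $U_A$, and $P_3$ is the dual of $P_0$ twisted by $U_A$. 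Since the right action of $A$ on $U_A = {}^1 A^\mu$ is twisted by $\mu$ (encoded by $P$) and the left module structure is shifted by the AS-index (encoded by $t^L$), the contributions to the alternating sum are $P t^L N(t^{-1})^T$ and $P t^L$ respectively. The equality $h_S(t) = I = h_A(t)(I - N(t) + P t^L N(t^{-1})^T - P t^L)$ then yields the formula.

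For part~(2), the differentials in the above minimal resolution are right multiplication by matrices of paths. Writing the map $P_1 \to P_0$ as right multiplication by the column vector $(x_j)$ of arrows, the map $P_2 \to P_1$ is right multiplication by a matrix $(g_{ij})$ whose rows $h_i = \sum_j g_{ij} x_j$ form a minimal generating set of $I$, with prescribed bidegree and vertex bookkeeping determined by the shifts in $P_2$. Dualizing using twisted Calabi-Yau duality and reading off the map $P_3^\vee \otimes U_A \to P_2^\vee \otimes U_A$ shows that the columns $h'_j = \sum_i y_i g_{ij}$ are another minimal generating set of $I$, where $(y_i)$ is the dual arrow space described in the theorem. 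Combining these into the single element $\omega = \sum_{i,j} y_i g_{ij} x_j \in kQ$ packages both descriptions; this is the derivation-quotient / twisted-superpotential interpretation in dimension~3, analogous to the dimension~2 case treated in Theorem~\ref{thm:main2}(2).

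For part~(3), substituting $N(t) = Mt$ and $L = \ell I$ (the equality of all $\ell_i$ following from the indecomposability plus weight-one hypothesis as noted in the excerpt) into part~(1) gives
\[
q(t) = I - Mt + P M^T t^{\ell - 1} - P t^\ell.
\]
The functional equation $q(t) = -P t^\ell q(t^{-1})^T$ and the commutation of $q(t)$ with $Pt^L$ force $P$ to commute with $M$ and with $M^T$; under the additional assumption that $M$ is normal, $M$, $M^T$, $P$ can be simultaneously unitarily diagonalized. So $\det q(t)$ factors as a product of scalar polynomials of the form
\[
f_{\lambda,p}(t) = 1 - \lambda t + p \bar\lambda t^{\ell - 1} - p t^\ell,
\]
one for each paired eigenvalue $\lambda$ of $M$ and $p$ of $P$ (with $|p| = 1$). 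By Theorem~\ref{thm:main1}(1), $\GKdim(A) < \infty$ iff every root of every $f_{\lambda,p}(t)$ is a root of unity; this is the "explicit condition on the eigenvalues" to be spelled out. The Perron--Frobenius eigenvalue $\rho = \rho(M)$ yields a distinguished factor $f_{\rho,1}(t) = (1-t)(1 + (1-\rho)t + t^2 + \cdots)$ after pairing conjugate eigenvalues, and requiring its roots on the unit circle pins down $\rho = 6 - \ell$ with $\ell \in \{3,4\}$ as the only solutions compatible with $\rho \geq 2$ (forced by $\GKdim(A) \geq 3$ and a dimension-two-style lower bound). Finally, the equality $\GKdim(A) = 3$ will follow from part~(1) of Theorem~\ref{thm:main1}: three is the multiplicity of $1$ as a root of $\det q(t)$ when the eigenvalue conditions are met.

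The main obstacle is the last step: classifying which pairs $(\lambda,p)$ with $|p|=1$ and $\ell \in \{3,4\}$ make all roots of $f_{\lambda,p}(t)$ roots of unity, and then translating this into geometric regions in $\mathbb{C}$. One must exploit the palindromic/anti-palindromic symmetry $f_{\lambda,p}(t) = -pt^\ell \overline{f_{\lambda,p}(1/\bar t)}$ coming from the functional equation, which forces roots to come in pairs $(\zeta, 1/\bar\zeta)$; combined with the Perron--Frobenius constraint on $\rho$ and some casework on the degree-$\ell$ cyclotomic polynomials dividing $f_{\lambda,p}$, this should yield the desired characterization and the equality $\rho = 6 - \ell$.
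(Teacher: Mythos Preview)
Your approach to parts~(1) and~(2) is essentially the paper's: build the minimal resolution of each $Se_r$, use the symmetry $n(Se_r,i,j,m)=n(Se_{\mu(j)},3-i,r,\ell_{\mu(j)}-m)$ coming from the Nakayama duality (your ``apply $\RHom_A(-,A)$'' step), read off $P_3=Ae_{\mu^{-1}(r)}(-\ell_r)$ and $H_2=Pt^L N(t^{-1})^T$, and then use the right-sided version of the resolution lemma to see that the $\{y_i\}$ form a second arrow basis and the $\{h'_j\}$ a second minimal generating set. That part is fine.

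The gap is in part~(3), specifically in how you pin down $\ell\in\{3,4\}$ and $\rho=6-\ell$. Your proposed mechanism---``requiring the roots of $f_{\rho,1}$ to lie on the unit circle pins down $\rho=6-\ell$ with $\ell\in\{3,4\}$, compatible with $\rho\geq 2$ forced by $\GKdim(A)\geq 3$ and a dimension-two-style lower bound''---does not work as stated: no such lower bound $\rho\geq 2$ is available here, and even if it were, it would not by itself single out $\ell\le 4$ or force $\rho = 6-\ell$. The paper's argument is sharper and different. From $\GKdim(A)\geq 3$ one concludes (via Theorem~\ref{thm:main1}(1) and the diagonalization) that some $f_i(t)=1-\delta_i t+\bar\delta_i\zeta_i t^{\ell-1}-\zeta_i t^\ell$ has $t=1$ as a root of multiplicity at least~$3$; then $f_i''(1)=0$ forces $\delta_i=\bar\delta_i=\ell/(\ell-2)$. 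Since $\delta_i$ is an eigenvalue of the integer matrix $M$ it is an algebraic integer, hence an integer, which is only possible for $\ell=3$ or $\ell=4$. This is the key arithmetic step, and your proposal misses it.

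The explicit condition on the eigenvalues is also not obtained by ``casework on cyclotomic polynomials dividing $f_{\lambda,p}$.'' After the change of variable $u=\sqrt[\ell]{\zeta_i}\,t$, each factor becomes $g(u)=1-au+\bar a u^{\ell-1}-u^\ell$ with $a=\delta_i\overline{\sqrt[\ell]{\zeta_i}}$, and the set of $a\in\mathbb{C}$ for which all roots of $g$ lie on the unit circle is a hypocycloid region $\Omega_\ell$ (a deltoid for $\ell=3$, an astroid for $\ell=4$), via results of Petersen--Sinclair on conjugate-reciprocal polynomials. The equality $\rho(M)=6-\ell$ then follows because the maximal modulus of points in $\Omega_\ell$ is exactly $6-\ell$, giving $\rho\leq 6-\ell$, while the triple-root computation already exhibited the eigenvalue $\ell/(\ell-2)=6-\ell$. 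Finally $\GKdim(A)=3$ follows since each $f_i$ has degree $\ell\le 4$ and (when $\ell=4$) has a nonzero $t^2$ coefficient only if it is $(1-t)^4$, which is excluded.
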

\noindent When the incidence matrix $M$ of $Q$ is not normal, we are not sure how to characterize when $A$ has finite
GK-dimension in Theorem~\ref{thm:main4}.  In related work in progress~\cite{GR}, the second-named author and Jason Gaddis classify all (unweighted) quivers $Q$ with at most three vertices for which there exists a twisted Calabi-Yau algebra $A = kQ/I$ of dimension $3$ with $\GKdim(A) < \infty$.
In particular, we note that there is an infinite family of examples of Calabi-Yau algebras of dimension $3$ with incidence matrices $\begin{psmallmatrix} 0 & a & 0 \\\ 0 & 0 & b \\ c & 0 & 0\end{psmallmatrix}$ where $a, b, c$ are any positive integers satisfying Markov's equation $a^2 + b^2 + c^2 = abc$.  These examples can be seen to arise through the process of quiver mutation.  There is certainly no upper bound to the spectral radius of such examples, unlike in the case of normal matrices.

The elements $\omega$ that determine the relations in both the dimension 2 and dimension 3 case do not appear to satisfy the usual definition of twisted superpotential in general, but they are clearly analogous in some way.  We think of them as  a kind of generalized twisted superpotential, and the theorems above show that the algebras must arise as generalized derivation-quotient algebras.
It would be interesting to try to formulate a more precise theory of twisted superpotentials in this context of weighted quivers.  Another project, which is work in progress, is to give a classification of all elementary locally finite twisted Calabi-Yau algebras of dimension $2$ (whether of finite GK-dimension or not).  The conjectural answer is that given any unweighted quiver with incidence matrix having spectral radius at least $2$, and an element $\omega$ as in Theorem~\ref{thm:main2}, then $kQ/(\omega)$
should be twisted Calabi-Yau.

\subsection*{Acknowledgments.}
We thank Jason Bell, Jason Gaddis, Colin Ingalls, and Stephan Weispfenning for helpful conversations.

\section{Matrix valued Hilbert series and growth}
\label{sec:growth}

In this section, we develop some results for matrix-valued Hilbert series which are analogues of
some well-known results for usual Hilbert series.  The arguments in the matrix-valued case are
generally similar.

First we recall some of the definitions of growth and Gelfand-Kirillov (GK) dimension.  Let $A$ be a finitely generated (unital) $k$-algebra, and let
$M$ be a finitely generated left $A$-module.  Let $V$ be a finite-dimensional $k$-subspace of $A$ containing 1 that generates $A$ as an algebra, and let $W$ be a finite-dimensional $k$-subspace of $M$ that generates $M$ as an $A$-module.   The \emph{growth}
of $M$ is the function $\mb{N} \to \mb{N}$ given by $n \mapsto \dim_k V^n W$; it is independent of the choice of $V$ and $W$ up to a natural equivalence relation on functions, so that all of the the following definitions are also independent of this choice.
We say that $M$ has \emph{exponential growth} if $\limsup_{n \to \infty} (\dim_k V^n W)^{1/n} > 1$.  If $M$ does not have exponential growth, then we
say that $M$ has \emph{subexponential growth}, and in this case (as long as $M \neq 0$) it follows that
$\limsup_{n \to \infty} (\dim_k V^n W)^{1/n} = 1$.
We say that $M$ has \emph{polynomial growth}
or \emph{finite GK-dimension} if $\limsup_{n \to \infty}  \log_n (\dim_k V^n W) = d < \infty$, with
the value of $d$ being called the GK-dimension of $M$.   The growth of the algebra $A$ and its GK-dimension are defined by applying
the definitions above to $A$ as a module over itself (one may take $W = k 1_A$).  See \cite{KL} for more details on these definitions.

Let $R[t]$ be the ring of polynomials over a ring $R$, let $R[t, t^{-1}]$ be the ring of Laurent polynomials over a ring $R$, let $R[[t]]$ be the ring of formal power series, and let $R((t))$ be the ring of formal Laurent series.   A \emph{matrix polynomial} is an element of the ring $M_n(\mb{Q})[t] \cong M_n(\mb{Q}[t])$.
Similarly, a \emph{matrix Laurent polynomial} is an element of the ring $M_n(\mb{Q})[t, t^{-1}] \cong M_n(\mb{Q}[t, t^{-1}])$.
We extend these definitions to power series: a \emph{matrix power series} is an element of $M_n(\mb{Q})[[t]] \cong M_n(\mb{Q}[[t]])$
and a \emph{matrix Laurent series} is an element of $M_n(\mb{Q})((t)) \cong M_n(\mb{Q}((t)))$.
There is an obvious notion of convergence of a matrix Laurent series:  given $\lambda \in \mb{C}$
and $p(t) = \sum_{n \geq n_0} H_n t^n \in M_m(\mb{Q}((t)))$, then we say that \emph{$p(t)$ converges at $\lambda$} if the sequence $\sum_{i = n_0}^{n} H_i \lambda^i$ converges as a sequence in $\mb{R}^{m^2}$ with its Euclidean topology; we let $p(\lambda)$ denote the matrix to which it converges.

\begin{definition}
Let $p(t)$ be a matrix polynomial.  A \emph{root} of $p(t)$ is a scalar $\lambda \in \mb{C}$ such that $p(\lambda)$ is a singular matrix.  Thus, the roots of $p(t)$ are the same as the roots of the polynomial $\det(p(t)) \in \mb{Q}[t]$.
\end{definition}

Recall that a $k$-algebra $A$ is \emph{$\mb{N}$-graded} if $A = \bigoplus_{n \geq 0} A_n$ as vector spaces, with
$A_i A_j \subseteq A_{i+j}$ for all $i,j$; we say that $A$ is \emph{locally finite} if $\dim_k A_n < \infty$ for all $n$.
In this paper when we refer to a graded algebra we always mean $\mb{N}$-graded.
A left $A$-module $M$ is $\mb{Z}$-graded if $M = \bigoplus_{n \in \mb{Z}} M_n$ with $A_i M_j \subseteq M_{i+j}$
for all $i,j$. For any $\ell \in \Z$, the shifted module $M(\ell)$ is the same module with the degrees shifted
so that $M(\ell)_n = M_{\ell + n}$.
We say that $M$ is \emph{left bounded} if $M_n = 0$ for $n  \ll 0$ and $M$ is \emph{locally finite} if $\dim_k M_n < \infty$ for all $n$.
Note that if $A$ is locally finite, then any finitely generated $\Z$-graded left $A$-module $M$ is left
bounded and locally finite.

Let $A$ be a graded $k$-algebra with $\dim_k A_0 < \infty$. Then one may write $1$ as a sum $1 = e_1 + \cdots + e_r$ where the $e_i$ are pairwise orthogonal primitive idempotents in $A_0$. It is easy to see that the $e_i$ are also primitive as idempotents of $A$; see the proof of \cite[Lemma 2.7]{RR1}.  In general the $e_i$ are not uniquely determined. We must fix a choice of $e_i$ fixed for the following definitions, but
we will describe the extent to which they are independent of the particular choice in Remark~\ref{rem:decomposition} below.

We will frequently make use of module decompositions via idempotents. If $S$ is an algebra containing an
idempotent $e$ and $M$ is a left $S$-module, then we have the vector space decomposition $M = eM \oplus (1-e)M$.
Furthermore, given idempotents $e,f \in S$, if $M$ is a left $S$-module, right $S$-module, or $(S,S)$-bimodule
then we have the following respective identifications:
\begin{align*}
eM &= \{x \in M \mid x = ex\}, \\
Mf &= \{x \in M \mid x = xf\}, \\
eMf &= \{x \in M \mid x = exf\} =  eM \cap Mf.
\end{align*}
We refer readers to~\cite[Section~21]{FC} for further details about idempotent decompositions.

\begin{definition}
Let $A$ be a locally finite graded algebra with a fixed decomposition $1 = e_1 + \cdots + e_r$ where the $e_i \in A_0$ are pairwise orthogonal primitive idempotents.  Let $M$ be a left bounded, locally finite $\mb{Z}$-graded left $A$-module.  The \emph{(vector) Hilbert series} of $M$
is the column vector in $\mb{Z}((t))^r$ given by $h_M(t) = (s_1(t), \dots, s_r(t))^T$, where $s_i(t) = \sum_{n \in \mb{Z}} \dim_k (e_i M)_n t^n$.  The \emph{total Hilbert series} of $M$ is $h_M\tot(t) = \sum_{n \in \mb{Z}} (\dim_k M_n) t^n \in \mb{Z}((t))$.  Clearly the coefficient of $t^n$ in the total Hilbert series is the sum of the coefficients of $t^n$ in all of the coordinates of the vector Hilbert series.

If $M$ is a $\mb{Z}$-graded left bounded locally finite  $(A, A)$-bimodule, we can consider an even finer invariant:
The \emph{(matrix) Hilbert series} of $M$ is the matrix Laurent series $h_M(t)  \in  M_r(\mb{Z}((t)))$  whose $(i,j)$ entry $[h_M(t)]_{i,j}$ is given by $\sum_{n \in \mb{Z}} \dim_k (e_i M e_j)_n t^n$.  Again, the coefficient of $t^n$ in the total Hilbert series of $M$ is given
by summing the coefficients of $t^n$ in all of the entries of the matrix Hilbert series.  Also, the $j$th column of the matrix Hilbert series
of $M$ is the vector Hilbert series of the left module $Me_j$.
\end{definition}

Of course, one may consider the matrix Hilbert series of the algebra $A$ itself thought of as an $(A, A)$-bimodule in the usual way.
We are especially interested in the matrix Hilbert series of graded algebras $A$ in this paper and of the vector Hilbert series of graded left $A$-modules $M$.  Thus when we just refer to the Hilbert series of the algebra $A$ or of a module $M$ with no modifiers, we will mean the matrix Hilbert series (or vector Hilbert series, respectively).

\begin{remark}\label{rem:decomposition}
Because the primitive idempotent decomposition $1 = e_1 + \cdots + e_r \in A_0$ used in the
definition above is not unique, one may worry to what extent the corresponding Hilbert series are
non-unique.
First, notice that the total Hilbert series $h_M\tot(t)$ is completely independent of the $\{e_i\}$.
Now suppose that $1 = f_1 + \cdots + f_m$ is another decomposition into orthogonal primitive
idempotents in $A_0$. Because $\bigoplus A_0e_i = A_0 = \bigoplus A_0f_i$, the Krull-Schmidt theorem implies
that $m =r$ and, up to a permutation, we have $A_0 e_i \cong A_0 f_i$ as left $A_0$-modules for
all~$i$.
It follows~\cite[Exercise~21.15]{FC} that there is a unit $u \in A_0$ such that $f_i = u e_i u^{-1}$
for all $i$. Then any locally finite left $A$-module $M$ satisfies $f_i M = u e_i u^{-1} M = u e_i M$,
so that
\[
\dim_k(f_iM)_n = \dim_k(u e_i M)_n = \dim_k (e_iM)_n.
\]
Thus up to a permutation of vector entries, the two vector Hilbert series of $M$ are equal. If
$M$ is in fact a locally finite graded $(A,A)$-bimodule, a similar analysis yields
$\dim_k (e_i M e_j)_n = \dim_k (f_i M f_j)_n$ for all $i$ and $j$, so that the two matrix-valued Hilbert
series for $A$ are equal after a suitable permutation of the rows and columns.
\end{remark}

\begin{remark}
\label{rem:graded growth}
Suppose that the locally finite graded algebra $A$ is finitely generated as an algebra and that $M$ is a $\mb{Z}$-graded finitely generated left $A$-module.  Then $M$ is left bounded and locally finite, say $M = \bigoplus_{n \geq n_0} M_n$, and the growth and GK-dimension of $M$ are both determined by the dimensions $\dim_k M_n$ of the graded pieces of $M$ in the following way: $M$ has subexponential growth
if and only if $\lim \sup_{n \to \infty} (\dim_k M_n)^{1/n} \leq 1$, and the GK-dimension of $M$ is equal to
$\lim \sup_{n \to \infty} \log_n (\sum_{i=n_0}^n \dim_k M_i)$. See~\cite[Lemma~ 6.1]{KL}.
\end{remark}

Based on the above, we say that an integer Laurent series $h(t) = \sum_{n \geq n_0} a_n t^n$ with nonnegative
coefficients (or any left bounded $\mb{Z}$-graded vector space $V$ with $a_n = \dim_k V_n$) has \emph{subexponential growth}
if $\lim \sup_{n \to \infty} (a_n)^{1/n} \leq 1$, and we define its \emph{GK-dimension} to be
$\lim \sup_{n \to \infty} \log_n \left( \sum_{i=n_0}^n a_i \right)$.

We will be interested in matrix Laurent series that are analogous to rational Laurent series.
It will often be convenient for us to consider matrix-valued series that can be expressed as quotients
$q(t)^{-1} p(t)$ for some matrix polynomials $p,q \in M_n(\QQ)[t]$. This raises the question of
whether such an expression can also be written in the form $r(t) s(t)^{-1}$ for matrix polynomials
$r$ and $t$; fortunately, these two properties are equivalent.
In the result below, we will say that a matrix polynomial $q(t) \in M_n(\QQ)(t)$ is \emph{Laurent invertible}
if it is invertible as an element of the ring $M_n(\QQ)((t)) \cong M_n(\QQ((t)))$ of matrix Laurent
series.

\begin{lemma}
Let $h(t) \in M_n(\QQ((t))) \cong M_n(\QQ)((t))$. The following are equivalent:
\begin{enumerate}[label=(\alph*)]
\item $h(t) = q(t)^{-1} p(t)$ for some $p,q \in M_n(\QQ)[t]$ such
that $q(t)$ is Laurent invertible;
\item $h(t) = r(t) s(t)^{-1}$ for some $r,s \in M_n(\QQ)[t]$ such
that $s(t)$ is Laurent invertible;
\item $h(t) \in M_n(\QQ(t)) \cong M_n(\QQ)(t)$ is a matrix of rational functions.
\end{enumerate}
\end{lemma}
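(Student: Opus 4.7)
The plan is to route the equivalences through the intermediate condition that $\det q(t) \neq 0$ in $\QQ[t]$. My first step is to verify that for any $q(t) \in M_n(\QQ[t])$, the following are equivalent: $q(t)$ is Laurent invertible (a unit in $M_n(\QQ((t)))$), $q(t)$ is invertible in $M_n(\QQ(t))$, and $\det q(t) \neq 0$. This holds because both $\QQ(t)$ and $\QQ((t))$ are fields in which invertibility of a square matrix is detected by its determinant, and because the determinant of $q(t)$ computed in $\QQ[t]$ agrees with its computation in either larger ring. By uniqueness of inverses in a ring, when these conditions hold the inverse of $q(t)$ in $M_n(\QQ(t))$ and in $M_n(\QQ((t)))$ coincide, and both are given by the classical adjugate formula $q(t)^{-1} = (\det q(t))^{-1} \operatorname{adj}(q(t))$, where $\operatorname{adj}(q(t)) \in M_n(\QQ[t])$.

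With this identification in hand, (a) $\Rightarrow$ (c) becomes immediate: rewriting $h(t) = (\det q(t))^{-1} \operatorname{adj}(q(t)) p(t)$ displays $h(t)$ as a matrix of rational functions, since $\operatorname{adj}(q(t))$ and $p(t)$ are matrix polynomials and $\det q(t)$ is a nonzero scalar polynomial. The implication (b) $\Rightarrow$ (c) is handled symmetrically, using $q(t)^{-1} = \operatorname{adj}(q(t))(\det q(t))^{-1}$ on the right.

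For the converse directions (c) $\Rightarrow$ (a) and (c) $\Rightarrow$ (b), I would clear denominators in the obvious way: given $h(t) \in M_n(\QQ(t))$, choose a nonzero $f(t) \in \QQ[t]$ that is a common denominator for all the entries of $h(t)$, so that $f(t) h(t) \in M_n(\QQ[t])$. The scalar matrix $f(t) I$ has determinant $f(t)^n \neq 0$, hence is Laurent invertible by the initial equivalence. Writing
\[
h(t) = (f(t) I)^{-1}(f(t) h(t)) = (f(t) h(t))(f(t) I)^{-1}
\]
now yields both (a) and (b) at once, since $f(t) I$ is central.

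The proof presents no substantial obstacle; the only point that requires care is the initial identification of the three notions of invertibility for a matrix polynomial, since it is this identification that justifies freely applying the adjugate formula regardless of which ambient ring one views $q(t)^{-1}$ as living in.
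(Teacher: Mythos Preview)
Your proof is correct and follows essentially the same route as the paper: both use the adjugate formula $q(t)^{-1} = (\det q(t))^{-1}\operatorname{adj}(q(t))$ to show (a)$\Rightarrow$(c) and its symmetric analogue for (b)$\Rightarrow$(c), and both clear a common scalar denominator $D(t)$ to write $h(t) = (D(t)I_n)^{-1}(D(t)h(t)) = (D(t)h(t))(D(t)I_n)^{-1}$ for the converse. Your preliminary paragraph making explicit the equivalence of Laurent invertibility with $\det q(t) \neq 0$ is a slight elaboration, but the paper implicitly uses the same fact when it argues that $\det q(t)\det r(t) = 1$ forces $D(t) = \det q(t)$ to be invertible in $\QQ(t)$.
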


\begin{proof}
Assume~(a) holds.  Let $r(t)$ be the inverse of $q(t)$ in $M_n(\mb{Q}((t)))$.  Then
since $q(t) r(t) = I_n$ is the identity matrix, we have $\det (q(t)) \det (r(t)) = 1$.  In particular,
the polynomial $D(t) = \det(q(t)) \in \QQ[t]$ has inverse $\frac{1}{D} \in \QQ(t)$.
Let $c(t) \in M_n(\QQ[t])$ denote the adjugate matrix of $q(t) \in M_n(\QQ[t])$;
it follows~\cite[Proposition~XIII.4.16]{Lang} that $q(t)^{-1} = \frac{1}{D} c(t)$
is a matrix of rational functions. Thus $q(t)^{-1}p(t) \in M_n(\QQ(t))$, establishing~(c).
A symmetric argument yields (b)$\implies$(c).

Conversely, suppose that~(c) holds. Since the entries of $h(t)$ are ratios of polynomials
in $\QQ[t]$, we may fix a common denominator $0 \neq D(t) \in \QQ[t]$ for every entry
of $h(t)$. So $p(t) = D \cdot h(t) \in \M_n(\QQ[t])$ is a matrix polynomial, and
\[
h(t) = D^{-1} \cdot p(t) = (D(t) \cdot I_n)^{-1} p(t) = p(t) (D(t) \cdot I_n)^{-1}
\]
so that~(a) and~(b) both hold.
\end{proof}

\begin{definition}
We say that a matrix Laurent series $h(t) \in M_n(\mb{Q}((t)))$ is \emph{rational} if it
satisfies the equivalent conditions above.
\end{definition}

When a locally finite $\mb{N}$-graded algebra $A$ has a rational Hilbert series $p(t) q(t)^{-1}$, the
growth of the algebra is closely related to the properties of the roots of $p(t)$ and $q(t)$, as we will see below.
Before discussing the matrix case further, we review the case where $n = 1$.
Given a nonzero rational power series $r(t) = p(t)q(t)^{-1} \in \mb{Q}((t))$, with $p(t) \in \mb{Q}[t]$ and $q(t) \in \mb{Q}[t]$, note that since $\mb{Q}[t] = \mb{Q}[1-t]$ we can also expand $r(t)$ as a Laurent series in powers of $(1-t)$,
say
\[
r(t) =  a_n(1-t)^n + a_{n+1}(1-t)^{n+1} + \cdots
\]
 where $n \in \mb{Z}$ and $0 \neq a_n \in \mb{Q}$.  Then following \cite{ATV2}, $a_n$ is called the \emph{multiplicity} of $r(t)$ and we write it as $\eps(r(t))$.  By convention, we also set $\eps(0) = 0$.
In particular, if $M$ is a graded module with a rational total Hilbert series $h_M\tot(t) = r(t)$, then we define the multiplicity
$\eps(M)$ of $M$ to be $\eps(r(t))$.

The following is well-known in the theory of GK dimension.
\begin{lemma}
\label{lem:rational series}
Let $0 \neq h(t) = p(t)q(t)^{-1}$ where $p(t) \in \mb{Z}[t, t^{-1}]$, $q(t) \in \mb{Z}[t]$ with $q(0) = \pm 1$, and $p(t), q(t)$ are relatively prime.
Suppose that its Laurent series expansion $h(t) = \sum_{n \geq n_0} a_n t^n \in \Z((t))$ centered at zero has nonnegative coefficients.
Let
\[
d = \lim \sup_{n \to \infty} \log_n \left( \sum_{i=n_0}^n a_i \right)
\]
 denote the GK dimension of $h(t)$. Then:
\begin{enumerate}
\item The following are equivalent:
\begin{enumerate}
\item $h(t)$ has subexponential growth;
\item The roots of $q(t)$ all lie on the unit circle in $\mb{C}$;
\item The roots of $q(t)$ are all roots of unity;
\item $d < \infty$.
\end{enumerate}
\item If $d < \infty$, then $d$ is equal to the multiplicity of $t = 1$ as a root of $q(t)$ (equivalently,
the order of the pole of $h(t)$ at $t = 1$).
\item If $d < \infty$, the multiplicity $\eps(h(t))$ of $h(t)$ is positive and is an integer multiple of $\eps(q(t))^{-1}$.
\end{enumerate}
\end{lemma}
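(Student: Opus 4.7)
The plan is to extract every part of the lemma from the partial fraction decomposition of $h(t)$ over $\C$.  Since $q(0) = \pm 1$, none of the complex roots of $q(t)$ vanish, so we may write $q(t) = \pm \prod_i (1 - \alpha_i t)^{m_i}$ with distinct nonzero $\alpha_i \in \C$ (so $\lambda_i = \alpha_i^{-1}$), and coprimality of $p$ and $q$ yields a partial fraction decomposition
\[
h(t) \;=\; r(t) + \sum_{i,j} \frac{c_{ij}}{(1 - \alpha_i t)^j}
\]
for some $r(t) \in \QQ[t,t^{-1}]$ and scalars $c_{ij} \in \C$ with $c_{i, m_i} \neq 0$ for each $i$.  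Expanding $(1 - \alpha t)^{-j} = \sum_n \binom{n+j-1}{j-1} \alpha^n t^n$ gives the master formula
\[
a_n \;=\; \sum_{i,j} c_{ij} \binom{n+j-1}{j-1} \alpha_i^n \qquad (n \gg 0),
\]
from which every statement of the lemma will be deduced.

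For part~(1), the key algebraic input is Kronecker's theorem: a nonzero algebraic integer whose Galois conjugates all lie in the closed unit disc must be a root of unity.  Because $q(0) = \pm 1$, the reciprocal polynomial $t^{\deg q} q(t^{-1})$ is monic up to sign in $\Z[t]$, so each $\alpha_i$ is an algebraic integer whose Galois conjugates appear among the other $\alpha_j$.  From the master formula the implications (c)$\Rightarrow$(b), (c)$\Rightarrow$(a), and (c)$\Rightarrow$(d) are immediate, since polynomials in $n$ times powers of unit-modulus numbers yield polynomial bounds on $|a_n|$.  Nonnegativity of $a_n$ combined with the hypothesis $\sum_{i \leq n} a_i = O(n^d)$ forces $a_n = O(n^d)$, giving (d)$\Rightarrow$(a).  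Finally, subexponential growth of the master formula forces $\max_i |\alpha_i| \leq 1$ (otherwise the largest $|\alpha_i|$ would dictate a genuinely exponential rate, by the same periodicity argument used in the next paragraph); Kronecker then promotes this bound to ``every $\alpha_i$ is a root of unity,'' giving (a)$\Rightarrow$(c) and closing the cycle.

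For parts~(2) and~(3), the crux is to show that $t = 1$ is a root of $q$ of the maximal multiplicity $M := \max_i m_i$.  The master formula gives the leading asymptotic
\[
a_n = \frac{n^{M-1}}{(M-1)!} b_n + O(n^{M-2}), \qquad b_n := \sum_{i : m_i = M} c_{i, M} \alpha_i^n,
\]
where $b_n$ is periodic in $n$ of period dividing $r := \lcm(\text{orders of the } \alpha_i)$.  I claim that nonnegativity of $a_n$ combined with periodicity forces $b_n \geq 0$ for \emph{every} $n$, since otherwise some residue class modulo $r$ would yield infinitely many negative $a_n$.  Linear independence of the characters $n \mapsto \alpha_i^n$ forces $b_n \not\equiv 0$, so the period average $\frac{1}{r} \sum_{n=0}^{r-1} b_n$ is strictly positive.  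By orthogonality of characters this average equals $c_{1, M}$ when $\alpha = 1$ is among the maximal-multiplicity poles and vanishes otherwise; hence $\alpha = 1$ \emph{is} such a pole, meaning $m_1 = M$ and $c_{1, M} > 0$.  Summing the master formula then yields $\sum_{n \leq N} a_n = \Theta(N^M)$, proving $d = m_1$ and establishing~(2).  For~(3), I expand $h(t) = p(t)/((1-t)^M \tilde q(t))$ with $\tilde q(1) \neq 0$ around $t = 1$ to obtain $\eps(h(t)) = p(1)/\tilde q(1) = p(1)\cdot \eps(q(t))^{-1}$; since $p(1) \in \Z$ this is an integer multiple of $\eps(q(t))^{-1}$, and positivity follows from positivity of the leading asymptotic $\sum_{n \leq N} a_n \sim \eps(h(t)) N^M / M!$.

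The main obstacle is the identification $m_1 = M$ in the previous paragraph: nonnegativity of $a_n$ is a surprisingly rigid constraint, and the periodicity-plus-character-independence argument needed to pass from ``$a_n \geq 0$'' to ``$b_n \geq 0$ pointwise'' must be applied with careful attention to the error term $O(n^{M-2})$ so as to rule out cancellation among maximal-multiplicity characters.  All remaining steps---the partial fraction setup, the Kronecker application for~(1), and the local expansion at $t = 1$ for~(3)---are routine manipulations.
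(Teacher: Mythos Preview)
Your argument is essentially correct and is considerably more self-contained than the paper's proof, which largely cites \cite[Lemma~2.1, Corollary~2.2]{SZ} for (a)$\Leftrightarrow$(b)$\Leftrightarrow$(c) and \cite[Proposition~2.21]{ATV2} for parts~(2) and~(3).  Your partial-fraction/character-theoretic argument for part~(2)---showing that nonnegativity of $a_n$ forces $b_n \geq 0$ pointwise, whence the period average $c_{1,M}$ is strictly positive and so $t=1$ is a pole of maximal multiplicity---is a clean direct proof of the key fact that the paper obtains only by citation.  For part~(3) you compute $\eps(h(t)) = p(1)\,\eps(q(t))^{-1}$ directly, whereas the paper argues that any element of $\Z[t,t^{-1}]$ has integer $(1-t)$-expansion; both reach the same conclusion, though you should note that $p(1)\neq 0$ holds precisely because $p,q$ are coprime and $(1-t)\mid q(t)$.

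There is one genuine slip in your justification of (a)$\Rightarrow$(b).  You write that if $\max_i |\alpha_i| > 1$ then ``the largest $|\alpha_i|$ would dictate a genuinely exponential rate, by the same periodicity argument used in the next paragraph.''  But that periodicity argument presupposes that the $\alpha_i$ are roots of unity, which is exactly what you are trying to prove; at this stage the leading sum $\sum_{|\alpha_i|=\rho} c_{i,M'}\,\theta_i^n$ (with $|\theta_i|=1$) is only almost-periodic, not periodic.  The fix is easy: since $p$ and $q$ are coprime, the nearest singularity of the rational function $h(t)$ to the origin is at distance $1/\rho$, so Cauchy--Hadamard gives $\limsup a_n^{1/n} = \rho$ directly.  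Alternatively, one can invoke the $L^2$-orthogonality of the characters $n\mapsto\theta_i^n$ to see that the leading sum cannot tend to zero.  With this repaired, your proof stands.
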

\begin{proof}
This is basically a combination of \cite[Proposition 2.21]{ATV2} and \cite[Lemma 2.1, Corollary 2.2]{SZ}.
First if $q(0) = -1$, by replacing $p$ and $q$ by $-p$ and $-q$ the hypotheses are preserved, and so
we can assume that $q(0) = 1$.

(1) Suppose that $h(t)$ has subexponential growth.  Then the results \cite[Lemma 2.1, Corollary 2.2]{SZ} show that the roots of $q(t)$ are all on the unit circle.
Since $q(0) = 1$, we can write $q(t) = \prod_i (1 - r_i t)$; if the roots of $q(t)$ are all on the unit circle, then since $q(t) \in \mb{Z}[t]$ we get that the leading coefficient of $q(t)$ is $\pm 1$.
As noted in \cite{SZ}, this implies that the roots of $q(t)$ are all roots of unity (by a theorem of Kronecker, which states that given a monic integer polynomial whose roots are all in the unit disk, the roots must all be roots of unity).
Next, assuming that the roots of $q(t)$ are all roots of unity, the proof of \cite[Proposition 2.21]{ATV2} shows that
$h(t) = b_{-d} (1-t)^{-d} + b_{-d+1}(1-t)^{-d+1} + \cdots $ where $0 < b_{-d} = \eps(h(t))$, and where the order $d$ of the pole of $h(t)$ at $t = 1$ is equal to the GK-dimension of $h(t)$.  This order is the same as the multiplicity of $t =1$ as a root of $q(t)$ since
$p(t)$ and $q(t)$ are relatively prime.  (Although this proposition from \cite{ATV2} is stated for the Hilbert series of a module over
a regular algebra, the proof only uses the properties of the Hilbert series we assume in our hypotheses.)  Finally it is trivial that if the GK-dimension of $h(t)$
is a finite number $d$, then $h(t)$ has subexponential growth.

(2) This was proven in the course of the proof of (1).

(3) It was established that $\eps(h(t)) > 0$ in the proof of (1).  Note that both $t = 1 -(1-t)$ and $t^{-1} = (1 - (1-t))^{-1} = (1 + (1-t) + (1-t)^2 + \cdots)$ have integer coefficients when expressed as series in powers of $(1-t)$.  Then the same is true of any power of $t$ and hence any element of $\mb{Z}[t, t^{-1}]$.  Thus $\eps(p(t)) \in \mb{Z}$.  It is also clear that $\eps(p(t)q(t)^{-1}) = \eps(p(t))\eps(q(t))^{-1}$ since multiplicity is multiplicative.  Thus $\eps(h(t))$ is an integer multiple of $\eps(q(t))^{-1}$ as required.
\end{proof}

We now give some extensions of the previous results to the matrix case.

\begin{proposition}
\label{prop:growth}
Let $A$ be a locally finite $\mb{N}$-graded algebra with fixed decomposition $1 = e_1 + \cdots + e_r$ of $1$ as a sum of pairwise orthogonal primitive idempotents in $A_0$.   Suppose that
$q(t) \in M_r(\mb{Z}[t])$ has $(\det q)(0) = \pm 1$ and let $c(t) \in M_r(\ZZ[t])$ be the adjugate matrix of $q(t)$,
so that $q(t)^{-1} = (\det q(t))^{-1} c(t)$ in the ring $M_r(\mb{Z}[[t]])$.
\begin{enumerate}
\item Suppose that $M$ is a nonzero graded left $A$-module with vector Hilbert series of the form $q(t)^{-1} v(t)$,
where $v(t) \in \mb{Z}[t, t^{-1}]^r$ is a column vector.   If $\GKdim(M) < \infty$, then $\GKdim(M)$ is the maximal order of $1$ as a pole of the set of rational functions $h_i =  (\det q(t))^{-1} \sum_j c_{ij}(t) v_j(t)$ over all $1 \leq i \leq r$.
\item Assume that $A$ has a rational matrix-valued Hilbert series $h_A(t) = q(t)^{-1} p(t)$, where
$p(t) \in M_r(\mb{Z}[t])$.   If $\GKdim(A)  < \infty$, then $\GKdim(A)$ is equal to the maximal order of $1$ as a pole of the rational functions
\[
(h_A(t))_{il} = (\det q(t))^{-1} (c(t) p(t))_{il} = (\det q(t))^{-1} \sum_j c_{ij}(t) p_{jl}(t)
\]
 over all $1 \leq i, l \leq r$.    In particular, if $d \leq \GKdim(A) < \infty$ then $\det q(t)$ must vanish at $t = 1$ to order at least~$d$.
\item Assume that $h_A(t) = q(t)^{-1} p(t)$ as in (2), where the matrix polynomials $p(t)$ and $q(t)$ have no roots in common.
Then the set $\mc{R}$ of roots of $q(t)$ is equal to the union of the set of poles of the matrix entries of
$h_A(t)$.  The series $h\tot_A(t)$ is also a rational function and its set of poles is contained in $\mc{R}$.
In particular, $\GKdim(A) < \infty$ if and only if every root of $q(t)$ is a root of unity, if and only if every root of $q(t)$ is on the unit circle; otherwise,
$A$ has exponential growth.
\end{enumerate}
\end{proposition}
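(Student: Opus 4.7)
The plan is to reduce each assertion to the scalar statement of Lemma~\ref{lem:rational series} by analyzing the entries and columns of the matrix Hilbert series, using the adjugate identity $q(t)^{-1} = (\det q(t))^{-1} c(t)$ in $M_r(\QQ(t))$. For part~(1), the components of $h_M(t) = q(t)^{-1} v(t)$ are the scalar rational functions $h_i(t) = (\det q(t))^{-1}\sum_j c_{ij}(t) v_j(t)$, each a power series with nonnegative integer coefficients since it records the graded dimensions of $e_i M$. After reducing each $h_i(t)$ to a coprime fraction whose denominator in $\ZZ[t]$ has constant term $\pm 1$, Lemma~\ref{lem:rational series} applies. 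By Remark~\ref{rem:graded growth}, $\GKdim(M)$ equals the GK-dimension of $h_M\tot(t) = \sum_i h_i(t)$, and by Lemma~\ref{lem:rational series}(2) this equals the pole order of $h_M\tot$ at $t=1$. Each $h_i(t)$ has coefficients bounded above by those of $h_M\tot(t)$ and hence inherits subexponential growth, so Lemma~\ref{lem:rational series} yields a Laurent expansion $h_i(t) = b_{-d_i}^{(i)}(1-t)^{-d_i}+\cdots$ at $t=1$ with strictly positive leading coefficient $b_{-d_i}^{(i)}$. Consequently no cancellation can occur in the leading term of the sum, so the pole order of $h_M\tot$ at $t=1$ equals $\max_i d_i$, yielding the stated formula.

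Part~(2) will follow from part~(1) by applying it to each summand of the decomposition $A = \bigoplus_l Ae_l$ of graded left modules. The vector Hilbert series of $Ae_l$ is the $l$-th column of $h_A(t) = q(t)^{-1}p(t)$, so its $i$-th component is $(\det q)^{-1}\sum_j c_{ij}p_{jl} = (h_A(t))_{il}$. Since the GK-dimension of a finite direct sum of graded modules is the maximum of the GK-dimensions of the summands, $\GKdim(A) = \max_l \GKdim(Ae_l)$, which gives the asserted formula and the lower bound on the order of vanishing of $\det q$ at $t=1$.

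For part~(3), the key preliminary step is to show that the union of the poles of the entries of $h_A(t)$ coincides with the set $\mathcal{R}$ of roots of $q(t)$. The inclusion in $\mathcal{R}$ is clear from the adjugate formula. Conversely, if $\alpha \in \mathcal{R}$ were not a pole of any entry, then $h_A(\alpha)$ would be defined, and evaluating $q(t)h_A(t) = p(t)$ at $\alpha$ would give $q(\alpha)h_A(\alpha) = p(\alpha)$, forcing the columns of $p(\alpha)$ into the image of the singular matrix $q(\alpha)$ and so making $p(\alpha)$ singular---contradicting the coprimality assumption. The rationality of $h_A\tot$ and containment of its poles in $\mathcal{R}$ follow because $h_A\tot$ is a sum of the entries. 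For the dichotomy: if every root of $q$ is a root of unity, each entry's poles are all roots of unity, so Lemma~\ref{lem:rational series} yields subexponential growth for each entry and hence for $h_A\tot$. Conversely, if $\GKdim(A)<\infty$, each entry has coefficients bounded above by those of $h_A\tot$ and so has subexponential growth, so Lemma~\ref{lem:rational series} applied entry-by-entry forces every pole---and, by the set equality above, every element of $\mathcal{R}$---to be a root of unity. If instead some root of $q$ is not on the unit circle, the corresponding entry fails to have subexponential growth and hence has exponential growth, which $A$ inherits. The main obstacle is the no-cancellation observation for leading Laurent coefficients at $t=1$, which crucially depends on the positivity of the multiplicity furnished by Lemma~\ref{lem:rational series}(3).
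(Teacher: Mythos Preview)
Your proposal is correct and follows essentially the same approach as the paper: reduce to the scalar Lemma~\ref{lem:rational series} by analyzing the entries $h_i(t) = (\det q)^{-1}\sum_j c_{ij} v_j$, pass to lowest terms so that the denominator divides $\det q$ (hence has constant term $\pm 1$), and then read off GK-dimension as pole order at $t=1$; parts~(2) and~(3) likewise match the paper's arguments almost line for line, including the evaluation of $q(t) h_A(t) = p(t)$ at a putative non-pole root of $q$ to derive a contradiction. The only cosmetic difference is in part~(1): the paper simply quotes that the GK-dimension of a finite direct sum of graded vector spaces is the maximum of the summands' GK-dimensions (the argument of \cite[Proposition~3.2]{KL}), whereas you route this through $h_M\tot = \sum_i h_i$ and use the positivity of the leading Laurent coefficient at $t=1$ (Lemma~\ref{lem:rational series}(3)) to rule out cancellation---this is a perfectly valid and slightly more self-contained way to reach the same conclusion.
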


\begin{proof}
(1)  The vector Hilbert series of $M$ is
\[
h_M(t) = q(t)^{-1} v(t) = (\det q(t))^{-1} c(t) v(t) = (h_1(t), h_2(t), \dots, h_r(t))^T
\]
by definition.  Write $h_i(t) = f_i(t)g_i(t)^{-1}$ with $f_i(t) \in \ZZ[t, t^{-1}]$, $g_i(t) \in \ZZ[t]$ where
$f_i(t), g_i(t)$ have no common factors for each $i$, and where necessarily $g_i(t)$ is a factor of $\det q(t)$ in $\ZZ[t]$.
 Since the constant term of $(\det q(t))$ is $1$, $g_i(t)$ has constant term $\pm 1$. Then Lemma~\ref{lem:rational series} applies to each nonzero $h_i(t)$; if $\GKdim(M) < \infty$, then each of the $h_i(t)$ has polynomially bounded growth and so its GK-dimension is the maximum order of $1$ as a pole of $h_i(t)$.  Since $M \neq 0$, at least one of the $h_i(t) \neq 0$.
The GK-dimension of $h_M(t)$ is equal to the maximum of the GK-dimensions of the nonzero $h_i(t)$ (for example,
by the same proof as \cite[Proposition 3.2]{KL}), and the result follows.

 (2).  Since $A \cong Ae_1\oplus \cdots \oplus Ae_r$, we have that $\GK(A)$ is the maximum of $\GK(Ae_l)$ over $1 \leq l \leq r$.
Moreover, each $\GK(Ae_l)$ is a module whose vector Hilbert series is $q(t)^{-1} v_l(t)$ where $v_l(t)$ is the
$l$th column of $p(t)$.  The first statement now follows from (1).

Now note that each rational function
$(h_A(t))_{il} = (\det q(t))^{-1} \sum_j c_{ij}(t) p_{jl}(t)$ may be written in lowest terms as $f(t) g(t)^{-1}$ with $f(t) \in \mb{Z}[t, t^{-1}]$
and $g(t) \in \mb{Z}[t]$ a factor of $\det q(t)$, so the order of the pole of $(h_A(t))_{il}$ at $t = 1$ is at most the order of vanishing of $\det q(t)$ at $t = 1$.  This proves the second statement.

(3)  We consider $h_A(t) = q(t)^{-1}p(t) = (\det q(t))^{-1} p(t) c(t)$ as a matrix-valued function whose entries $(h_A(t))_{il}$
are meromorphic (indeed, rational) functions of a complex variable $t$.  That is, if $\mb{F}$ is the field
of meromorphic functions, we work in $M_{r}(\mb{F})$.   It is clear that if $M,N \in M_r(\mb{F})$ where no entry of $M$ or $N$ has a pole at $\lambda \in \mb{C}$, then every entry of the product $MN$ also has no pole at $\lambda$, so the evaluations $M(\lambda)$, $N(\lambda)$, and $[MN](\lambda)$ make sense, and
$[MN](\lambda) = M(\lambda)N(\lambda)$.

Note that the roots of the matrix polynomial $q(t)$ are the same as the usual roots of the polynomial $\det q(t)$.
Now if $\lambda \in \C$ is not a root of $q(t)$, then every matrix entry of $h_A(t) =  (\det q(t))^{-1} p(t) c(t)$ has no pole
at $\lambda$.
Conversely, suppose that $\lambda \in \C$ is not a pole of any entry of $h_A(t)$.   Then $h_A(\lambda)$ is defined.
 Since by assumption $p(t), q(t) \in M_r(\mb{Q}[t])$, these matrix polynomials have no poles anywhere.
Now in $M_r(\mb{F})$ we have $h_A(t) q(t) = p(t)$, and so $h_A(\lambda) q(\lambda) = p(\lambda)$ in $M_r(\mb{C})$.  If $\lambda$ is a root of $q(t)$, then $q(\lambda)$ is singular.  This forces $p(\lambda)$ to be singular as well, so that $\lambda$ is also a root of $p(t)$, which contradicts the hypothesis that $p(t)$ and $q(t)$ have no
common roots.  Thus $\lambda$ is not a root of $q(t)$.

We have now proved that the roots of $q(t)$ are the same set $\mc{R}$ as the union of the poles of the matrix entries $(h_A(t))_{il}$.   As $h\tot_A(t)$ is the
sum of these entries over all $i, l$, we see that it is a rational function with poles contained in $\mc{R}$.

Now $A$ has finite GK-dimension if and only if each matrix entry $(h_A(t))_{il}$ is a series with finite GK-dimension.  Each of these
entries $(h_A(t))_{il}$ can we written as a rational function $f(t)g(t)^{-1}$ where $f(t), g(t) \in \mb{Z}[t]$ have no common factors and where $g(t)$ is a factor of $\det q(t)$ in $\mb{Z}[t]$.  Thus $g(0) = \pm 1$ and by Lemma~\ref{lem:rational series}, $(h_A(t))_{il}$ has
finite GK-dimension if and only if every root of $g(t)$ is a root of unity, if and only if every root of $g(t)$ is on the unit circle.  Since the union of
the poles of the $(h_A(t))_{il})$ is $\mc{R}$, we see that $A$ has finite GK-dimension if and only if every element of $\mc{R}$ is a root of unity.
Otherwise, by Lemma~\ref{lem:rational series}, some matrix entry of $h_A(t)$ must have exponential growth, and thus $A$
has exponential growth.
\end{proof}

\begin{example}
\label{ex:matrix-vs-total}
Recall that for a graded $k$-algebra $B$ and a finite group $G$ acting on $B$ by graded automorphisms, we have the skew group algebra $B \# kG$.  This algebra is equal to $B \otimes_k kG$ as a vector space, but with multiplication $(a \otimes g) * (b \otimes h) = ag(b) \otimes gh$
for $a, b \in B$, $g, h \in G$.  The algebra $B \# kG$ is again graded, where $kG$ has degree $0$.

Now let $G = \mb{Z}_2 = \langle \sigma \rangle$ act on $k[x, y]$ where $\sigma(x) = x, \sigma(y) = -y$, and let $A = k[x,y] \# kG$.  Since the polynomial ring $k[x,y]$ has Hilbert series $1/(1-t)^2$, it is obvious that $h\tot_A(t) = 2/(1-t)^2$. 
It is straightforward to calculate the matrix Hilbert series of $A$, as follows.
We identify $k[x,y]$ and $kG$ with subalgebras of $A$.   We have $A_0 = kG = ke_1 + ke_2$ where $1 = e_1 + e_2$ is a decomposition into primitive orthogonal idempotents; in this case, if $G = \{e, g\}$ with $e$ the identity element and $g^2 = e$, then we may take $e_1 = (e+g)/2$, $e_2 = (e-g)/2$.  A short calculation shows that $e_1 A e_1 = k[x, y^2] e_1$ and $e_2 A e_1 = y k[x, y^2] e_1$, and similarly
$e_2 A e_2 = k[x, y^2] e_2$, $e_1 A e_2 = y k[x, y^2] e_2$.  Thus we have
\[
h_A(t) = \begin{pmatrix} \frac{1}{(1-t)(1- t^2)} & \frac{t}{(1-t)(1-t^2)}  \\ \frac{t}{(1-t)(1-t^2)}  & \frac{1}{(1-t)(1-t^2)}  \end{pmatrix}.
\]
A direct check shows that we also have $h_A(t) = q(t)^{-1}$, where
$q(t) = \begin{psmatrix} 1-t & -t+t^2 \\ -t+t^2 & 1-t \end{psmatrix}$ and $\det q(t) = (1-t^2)(1-t)^2$.
Thus we may apply Proposition~\ref{prop:growth}(2)(3) to $A$, with $p(t) = I$.
This example shows that in Proposition~\ref{prop:growth}(2), $\det q(t)$ may well vanish to a greater order at $t = 1$ than
$\GKdim(A)$.  In our example, $\det q(t)$ has 1 as a root of multiplicity 3, but $\GKdim(A) = 2$ because each matrix entry of $h_A(t)$ has a pole
at $t =1$ only of multiplicity $2$.  The reason for this is cancellation that happens when putting each $(h_A(t))_{il} = (\det q(t))^{-1} \sum_j c_{ij}(t) p_{jl}(t)$ in lowest terms.  Similarly, in Proposition~\ref{prop:growth}(3) the set $\mc{R}$ of roots of $q(t)$ may be bigger than the set of poles of the series $h\tot_A(t)$.  In our example, $\mc{R} = \{ 1, -1 \}$ whereas $h\tot_A(t)$ has a pole only at $t = 1$.  This is caused by cancellation  that happens after adding together the matrix entries of $h_A(t)$ to get the total Hilbert series.

In fact, it is well known that $A$ is a twisted Calabi-Yau algebra of global dimension $2$; for example,
see \cite[Theorem 4.1(c)]{RRZ1}. The matrix Hilbert series of $A$ can also be calculated from the theory of Section~\ref{sec:dim2} below.
\end{example}

\section{Elementary algebras and quivers}
\label{sec:elemquiv}

The basic ideas in this section are standard, but we give full details since the case of algebras that are not necessarily generated in degree~1 seems to be less well-known.

Let $A$ be a locally finite $\mb{N}$-graded $k$-algebra.  Let $J = J(A)$ be the graded Jacobson radical, that is, the intersection of all graded maximal right ideals of $A$.  Then $A_{\geq 1} \subseteq J$ and so $J = A_{\geq 1} \oplus J(A_0)$, where $J(A_0)$ is the usual Jacobson radical
of the finite-dimensional algebra $A_0$; see~\cite[Corollary~A.II.6.5]{NV1}. Thus $S = A/J \cong A_0/J(A_0)$ is a finite-dimensional semisimple algebra.  By Wedderburn's theorem, $S$ is a product of matrix rings over division rings which are finite-dimensional over $k$.
Following similar terminology in the case of finite-dimensional algebras~\cite[p.~65]{ARS}, we will say that $A$ is \emph{elementary} if $S$ is isomorphic to a product $k^n$ of copies of the base field $k$.   This is not usually a serious restriction; for instance, if $k$ is an algebraically closed field and $S$ is
commutative, then the semisimple algebra $S$ is a product of copies of $k$, so that $A$ is elementary.
For many purposes, in particular for the study of twisted Calabi-Yau algebras, it is easy to reduce to this case, as we will explain in Section~\ref{sec:CY}.

Suppose that $A$ as above is elementary.
The $n$ orthogonal primitive idempotents of $S = k^n$ can be lifted (not necessarily uniquely) to an orthogonal family of
primitive idempotents with $1 = e_1 + \cdots + e_n$ in $A_0$; see~\cite[Corollary~21.32]{FC}.
Note that  writing $1 = e_1 + \cdots + e_n$ as a sum of primitive pairwise orthogonal idempotents in $A_0$, then $T = ke_1 + \cdots + ke_n$ is a subalgebra of $A$ isomorphic to $k^n$.  Thus when $A$ is elementary, we can identify $T$ with $S$ via the map $T \to A \to A/J(A) = S$ which is an isomorphism of $k$-algebras.
In this way we may identify $S$ with a subalgebra of $A$, which again is not necessarily unique. Nevertheless, this identification is frequently useful; for example, this allows us to refer to left $S$-submodules of $A$, which are $k$-subspaces of $A$ closed under left multiplication by all of the $e_i$.   Since $A = \bigoplus_{i=1}^n Ae_i$,
each $Ae_i$ is a graded projective left $A$-module, and it is an indecomposable module since $e_i$ is a primitive idempotent.  As $i$ varies we get $n$ distinct simple left modules $Se_i = Ae_i/Je_i$, and every graded simple left module is isomorphic to a shift of one of the $Se_i$.

Continue to assume that $A$ is elementary.  Let $P$ and $Q$ be left bounded graded projective $A$-modules.  We claim that there is a graded isomorphism $P \cong Q$ if and only if there is a graded isomorphism of left $S$-modules $P/JP \cong Q/JQ$.  The forward direction is immediate; for the converse, by projectivity the isomorphism $f: P/JP \cong Q/JQ$ can be lifted to a graded $A$-module map $\widetilde{f}: P \to Q$, which is surjective by the graded Nakayama Lemma \cite[Lemma 2.2]{RR1}.  Since the short exact sequence $0 \to K \to P \overset{\widetilde{f}}{\to} Q \to 0$ is split, the sequence $0 \to K/JK \to P/JP \to Q/JQ \to 0$ is also exact, forcing $K/JK = 0$ and hence $K = 0$ by Nakayama's lemma again; then $\widetilde{f}$ is an isomorphism, proving the claim.  Since $S$ is semisimple, there is a graded isomorphism $P/JP \cong \bigoplus_i Se_{m_i}(\ell_i)$, and hence by the claim there is an isomorphism $P \cong \bigoplus_{i} A e_{m_i}(\ell_i)$.  The list of pairs $(m_i, \ell_i)$ is unique up to permutation, using the Krull-Schmidt theorem for $S$-modules.  In particular, the $Ae_i$ are the only indecomposable
left bounded graded projective modules.

In the next result, we give some basic information on the shapes of the minimal projective resolutions of modules over
an elementary algebra.  For a more detailed review of this topic, see \cite[Section 2]{RR1}.
All complexes will be homological by convention in this paper. By contrast, the complexes in~\cite{RR1} were cohomological (cochain complexes), and we have made appropriate notational changes when applying results from that paper.  Let $\cdots \to P_n \to \cdots \overset{d_2}{\to} P_1 \overset{d_1}{\to} P_0 \overset{\epsilon}{\to} M \to 0$ be a projective resolution of the graded module $M$ over the graded algebra $A$.  The resolution is called \emph{graded} if all $P_i$ are graded projective modules and all maps $d_i$ as well as the augmentation map $\epsilon$ are homogeneous of degree $0$.   If $P$ is graded projective and $f: P \to M$ is a graded surjection, we call this a \emph{minimal} surjection if $f$ induces an isomorphism
$P/JP \to M/JM$, or equivalently, if $\ker f \subseteq JP$.  By the graded Nakayama lemma, every left bounded graded module admits a minimal surjection from a graded projective module.  A graded resolution as above is \emph{minimal} if all of the $d_i$ as well as $\epsilon$ are minimal surjections onto their images, or equivalently if $\ker d_i \in JP_i$ for all $i$ and $\ker \epsilon \in JP_0$.   Then every left bounded graded module has a minimal graded projective resolution, which is unique up to isomorphism of complexes.

\begin{proposition}
\label{prop:resolution form1}
Let $A$ be a finitely graded $k$-algebra with $S = A/J(A)$.
Assume that $A$ is elementary with $S = k^n$ and write $1 = e_1 + \cdots + e_n$ as a sum of primitive pairwise orthogonal idempotents $e_i \in A_0$.
Let $P_{\bullet}$ be the minimal projective resolution of the finitely generated graded left module $M$, and
assume that each $P_i$ is finitely generated.  Then
$P_i = \bigoplus_{j, m} [A e_j(-m)]^{n(M,i,j,m)}$ where
\[
n(M, i,j,m) = \dim_k \Tor_i^A(e_jS, M)_m = \dim_k \Ext^i_A(M, Se_j)_{-m}.
\]
\end{proposition}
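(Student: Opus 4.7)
The plan is to combine the classification of indecomposable graded projectives recalled just before the proposition with the standard observation that in a minimal graded projective resolution the differentials vanish after applying $(-) \otimes_A S$ or $\Hom_A(-, S)$, so that multiplicities of summands can be read off by computing $\Tor$ and $\Ext$ against simple modules.

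First, for each $i$ the finitely generated graded projective module $P_i$ decomposes uniquely as $P_i \cong \bigoplus_{j,m} [Ae_j(-m)]^{n(M,i,j,m)}$ for some nonnegative integer multiplicities $n(M,i,j,m)$; this is exactly the Krull-Schmidt-type statement for graded projectives over an elementary algebra described in the paragraph preceding the proposition (unique decomposition follows from the fact that $P \cong Q$ iff $P/JP \cong Q/JQ$ together with Krull-Schmidt for $S$-modules). It remains to identify these multiplicities with the claimed dimensions of $\Tor$ and $\Ext$.

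Second, minimality of the resolution is exactly the condition $\operatorname{im}(d_{i+1}) \subseteq JP_i$ for all $i \geq 0$ (with the convention $d_0 = \epsilon$). The simple right $A$-module $e_j S$ is annihilated on the right by $J$, and the simple left $A$-module $Se_j$ is annihilated on the left by $J$. Consequently the complexes $e_j S \otimes_A P_\bullet$ and $\Hom_A(P_\bullet, Se_j)$ have zero differentials, so
\[
\Tor_i^A(e_j S, M) \cong e_j S \otimes_A P_i, \qquad \Ext^i_A(M, Se_j) \cong \Hom_A(P_i, Se_j).
\]

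Third, I would compute the contribution of each indecomposable summand. Since $e_j S e_k \cong \delta_{jk}\, k$ sits in degree zero, one has $e_j S \otimes_A Ae_k(-m) \cong (e_j S e_k)(-m)$, which is one-dimensional and concentrated in degree $m$ precisely when $j = k$; summing over all summands of $P_i$ yields $\dim_k \Tor_i^A(e_j S, M)_m = n(M,i,j,m)$. Similarly, a graded homomorphism $Ae_k(-m) \to Se_j$ is determined by the image of the generator $e_k \in (Ae_k(-m))_m$, which must land in $(Se_j)_0$, so $\Hom_A(Ae_k(-m), Se_j)$ is $\delta_{jk}\,k$ concentrated in internal degree $-m$; summing gives $\dim_k \Ext^i_A(M, Se_j)_{-m} = n(M,i,j,m)$. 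The argument is mostly bookkeeping once one notices that minimality kills the differentials against simple coefficients; the only mild obstacle is tracking grading shifts carefully, which is what produces the opposite signs $m$ and $-m$ in the two formulas.
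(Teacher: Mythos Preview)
Your proposal is correct and follows essentially the same approach as the paper: both use minimality to kill the differentials after tensoring with or mapping into simples, then read off the multiplicities from $e_jS \otimes_A Ae_k(-m)$ and $\Hom_A(Ae_k(-m), Se_j)$ using $\dim_k e_jSe_k = \delta_{jk}$. The only cosmetic difference is that the paper first computes $\Tor_i^A(S,M)$ and $\Ext^i_A(M,S)$ and then passes to the idempotent pieces $e_j(-)$ and $(-)e_j$, whereas you work directly with $e_jS$ and $Se_j$ from the start; the bookkeeping is the same.
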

\begin{proof}
(See~\cite[Lemma~2.6]{RR1} for a similar result.)
First, we can write each of the projective modules $P_i$ as $P_i = \bigoplus_{j, m} [A e_j(-m)]^{n(M,i,j,m)}$ for some numbers $n(M,i,j,m)$, because
$P_i$ is a direct sum of finitely many indecomposable graded projectives, each of which is a shift of one of the $Ae_j$.

Because $P_{\bullet}$ is a minimal resolution, $S \otimes_A P_{\bullet}$ is a complex whose morphisms are all $0$.  Thus
\[
\Tor_i^A(S, M) \cong S \otimes_A P_i = P_i/JP_i = \bigoplus_{j, m} [S e_j(-m)]^{n(M,i,j,m)}.
\]
Note that $\Tor_i^A(e_j S, M)$ is the $i$th homology of $e_j S \otimes_A P_{\bullet} = e_j(S \otimes_A P_{\bullet})$ and so
\[
\Tor_i^A(e_j S, M) = \bigoplus_{r,m} [e_j S e_r(-m)]^{n(M,i,r,m)}.
\]
Then since $A$ is elementary, we have $\dim_k e_j S e_r = \delta_{jr}$ and thus
\[
\dim_k \Tor_i^A(e_j S, M)_m =  \dim_k \bigoplus_{r, p} [e_j S e_r(-p)]_m^{n(M,i,r,p)} = n(M,i,j,m).
\]
Similarly, $\Hom_A(P_{\bullet}, S)$ is a complex with $0$ morphisms and hence
\begin{align*}
\Ext^i_A(M,S) &\cong \Hom_A(P_i, S) =\Hom_A(P_i/JP_i, S) \\
&=  \Hom_A(\bigoplus_{j, m} [S e_j(-m)]^{n(M,i,j,m)}, S) = \bigoplus_{j, m} e_j S(m)^{n(M,i,j,m)}.
\end{align*}
Now $\Ext^i_A(M, Se_j)$ is the $i$th homology of $\Hom_A(P_{\bullet}, Se_j) = \Hom_A(P_{\bullet}, S) e_j$
(see \cite[Lemma~2.8]{RR1}) and so we have
\[
\dim_k \Ext^i_A(M, Se_j)_{-m} =  \dim_k \bigoplus_{r, p} [e_r S e_j(p)]_{-m}^{n(M,i,r,p)} = n(M,i,j,m),
\]
as claimed.
\end{proof}

Next, we recall how any elementary $k$-algebra can be presented as a path algebra of a weighted quiver with relations.
The reader can find general background on quivers in \cite{ASS}.  For us, a \emph{weighted quiver} $Q$ is a finite directed graph with a nonnegative integer (the \emph{degree} or \emph{weight}) associated to each arrow.  The (weighted) path algebra $kQ$ is the usual path algebra with the unique $\mb{N}$-grading coming from
giving each arrow a degree equal to its weight.
Following \cite{ASS}, for an arrow $\alpha \in Q$ pointing
from vertex $i$ to vertex $j$, we call $i$ the \emph{source} of the arrow and $j$ the \emph{target}, and write $s(\alpha) = i$ and $t(\alpha) = j$.
We use the convention that arrows are composed in $kQ$ from left to right.
The grading of the weighted path algebra is such that its graded components $(kQ)_n$ are given by the span of all
paths of total degree~$n$.

In $kQ$ we have the canonical decomposition of $1 = e_1 + \cdots + e_n$ of $1$ as  a sum of primitive
orthogonal idempotents, where $e_i$ is the trivial path at vertex $i$.  We let $S = \bigoplus k e_i$ denote the
span of these idempotents, which forms a subalgebra of $kQ$, called the \emph{vertex space}.
We define the \emph{arrow space} $V$ of $kQ$ to be the $k$-span of the arrows in $Q$.
It is straightforward to see that $V$ is a graded $(S,S)$-subbimodule of the weighted path algebra $kQ$.

It is well known (see~\cite[Proposition~III.1.3]{ARS}, for instance) that the path algebra is isomorphic to a tensor algebra
in the following way. Given an integer $n \geq 0$, let $V^{\otimes n} = V \otimes_S \otimes \cdots \otimes_S V$
denote the $n$-fold tensor power of the $(S,S)$-bimodule $V$, with the convention that $V^{\otimes 0} = S$.
Then we may form the tensor algebra $T_S(V) = \bigoplus_{n = 0}^\infty V^{\otimes n}$.
One can check that $V^{\otimes n}$ is isomorphic as an $(S,S)$-bimodule to the $k$-linear span $P_n$ of all paths
with length (\emph{not} degree!)\ equal to~$n$ in $kQ$, where $P_0 = S$. But then
\[
kQ = \bigoplus_{n \geq 0} P_n \cong \bigoplus_{n \geq 0} V^{\otimes n} = T_S(V)
\]
as $(S,S)$-bimodules, and it is not hard to show that this is in fact an isomorphism of $k$-algebras that fixes $S$.
If one endows $kQ$ with the weight grading and $T_S(V)$ with the grading inherited from $V$, then this
is a graded isomorphism.
In particular, it follows that $kQ$ possesses the universal property~\cite[Lemma~III.1.2]{ARS} of the
tensor algebra $T_S(V)$.

\begin{lemma}\label{lem:path algebra}
Let $Q$ be a weighted quiver, and let $kQ$ be the associated graded path algebra. Let $Q_0$ denote
the subquiver of $Q$ consisting of the same vertices as $Q$, but only those edges of weight~0. Then:
\begin{enumerate}
\item The degree zero subalgebra of $kQ$ is $(kQ)_0 = kQ_0$.
\item $kQ$ is locally finite if and only if $Q_0$ is acyclic.
\item If $kQ$ is locally finite, then $J(kQ) = J(kQ_0) \oplus (kQ)_{\geq 1}$ is the $k$-linear span of all paths
of length $\geq 1$.
\end{enumerate}
\end{lemma}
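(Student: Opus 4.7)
The plan is to dispatch the three parts in order, using mostly direct manipulation of the path-algebra basis together with one appeal to the decomposition $J(A) = J(A_0) \oplus A_{\geq 1}$ already cited from~\cite{NV1}. Throughout I would stress the distinction between the \emph{length} of a path (its number of arrows) and its \emph{degree} (the sum of the weights of its arrows).

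For part~(1), I would use that $kQ$ has a $k$-basis indexed by the paths of $Q$, with the degree of each basis path equal to the sum of the weights of its constituent arrows. Since all weights are nonnegative, a path is of total degree $0$ if and only if every arrow appearing in it has weight $0$, i.e.\ lies in $Q_0$. This identifies $(kQ)_0$ with $kQ_0$ on the level of bases, and hence as $k$-subalgebras of $kQ$.

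For part~(2), the forward direction is immediate from~(1): local finiteness forces $\dim_k kQ_0 = \dim_k (kQ)_0 < \infty$, and for a finite quiver this happens exactly when $Q_0$ has only finitely many paths, equivalently when $Q_0$ is acyclic. For the converse, assuming $Q_0$ is acyclic, I would bound $\dim_k (kQ)_n$ for each $n \geq 1$ by decomposing every basis path of degree~$n$ as an alternating concatenation of maximal subpaths lying in $Q_0$ separated by arrows of positive weight. Since $Q$ is a finite quiver, there are only finitely many positive-weight arrows, and any path of total degree~$n$ can contain at most $n$ of them; between two consecutive positive-weight arrows (and at either end) the weight-zero stretch ranges over the finite set of paths in $Q_0$. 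This produces only finitely many basis paths of degree~$n$.

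For part~(3), the decomposition $J(kQ) = J(kQ_0) \oplus (kQ)_{\geq 1}$ is already given by the general result cited from~\cite[Corollary~A.II.6.5]{NV1}, so it remains to identify $J(kQ_0)$. Since $Q_0$ is a finite acyclic quiver, $kQ_0$ is a finite-dimensional $k$-algebra; the two-sided ideal $I_0$ spanned by all paths of length $\geq 1$ in $Q_0$ is nilpotent (once the length exceeds the longest path in $Q_0$, all products vanish), while the quotient $kQ_0/I_0$ is spanned by the trivial-path idempotents and is isomorphic to $k^n$, which is semisimple. Hence $J(kQ_0) = I_0$. Combining with the decomposition above expresses $J(kQ)$ as the $k$-linear span of all paths of length $\geq 1$ in $Q$, since such a path either has positive degree (contributing to $(kQ)_{\geq 1}$) or has degree~$0$ but positive length (contributing to $J(kQ_0) = I_0$).

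The only real obstacle is the finite-dimensionality estimate in~(2); the rest is bookkeeping once one is careful not to conflate length and degree.
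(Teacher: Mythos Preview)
Your proof is correct and largely matches the paper's. Parts~(1) and~(3) are essentially identical to the paper's treatment; in~(3) you supply the detail that $J(kQ_0)$ is the span of nontrivial paths (via nilpotence of that ideal and semisimplicity of the quotient), which the paper simply asserts.

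The only substantive difference is in the converse direction of~(2). The paper argues via the tensor-algebra description $kQ \cong T_S(V)$: it observes that for $N$ larger than the number of arrows in $Q_0$, every path of length $N$ must contain a positive-weight arrow, so $V^{\otimes N}$ has no degree-zero part, and then invokes a general fact from the companion paper~\cite{RR1} that this condition forces $T_S(V)$ to be locally finite. Your argument is more direct and self-contained: you bound $\dim_k (kQ)_n$ by decomposing each degree-$n$ path into at most $n$ positive-weight arrows interleaved with paths from the finite acyclic quiver $Q_0$. Your route avoids the external reference and is arguably cleaner for this specific statement; the paper's route has the advantage of fitting into a broader framework they use elsewhere.
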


\begin{proof}
(1) As mentioned above, $(kQ)_0$ is the $k$-linear span of all paths of total degree~0. This includes the
trivial paths $e_i$, and any nontrivial paths of degree~0 are paths of arrows lying in $Q_0$. It follows
that $(kQ)_0 = kQ_0$.

(2) If $kQ$ is locally finite, then $(kQ_0) = kQ_0$ is finite-dimensional, which implies that $Q_0$ is acyclic.
Conversely, suppose that $Q_0$ is acyclic. Let $N$ be an integer larger than the number of arrows in $Q_0$.
As stated above, the graded $(S,S)$-bimodule $V^{\otimes N}$ can be identified with the span of all paths of length~$N$ in $kQ$.  Because $Q_0$ is acyclic, it contains no paths of length $N$; equivalently, every path of length $N$ in $Q$ must contain an arrow outside of $Q_0$. Thus $V^{\otimes N}$ has non nonzero elements of degree~$0$.
It follows as in the discussion preceding \cite[Lemma 6.9]{RR1} that $T_S(V)$ is locally finite. Thus $kQ \cong T_S(V)$ is locally finite.

(3) Assuming that $kQ$ is locally finite, its graded Jacobson radical decomposes as
\[
J(kQ) = J((kQ)_0) \oplus (kQ)_{\geq 1} = J(kQ_0) \oplus (kQ)_{\geq 1},
\]
where the second equality follows from part~(1).
The first summand is the span of all nontrivial paths of total degree~$0$, and the second summand is the span of all paths of total degree~$\geq 1$.
The direct sum of these two subspaces is now readily seen to be the span of all paths of length~$\geq 1$.
\end{proof}

We now discuss how any elementary locally finite graded $k$-algebra $A$ can be minimally presented as a factor of a path algebra of a weighted quiver, depending on some non-canonical choices in $A$. The construction of the quiver is facilitated by the following.

\begin{definition}\label{def:arrow space}
Let $A$ be an elementary locally finite graded algebra. A \emph{vertex basis} for $A$ is an
(ordered) collection of orthogonal primitive idempotents $\{e_1, \dots, e_n \}$ in $A_0$ such that
$1 = \sum e_i$. The corresponding \emph{vertex space} is $ke_1 + \cdots + ke_n$; as stated earlier,
this is a subalgebra of $A_0$ that can be identified with $S = A/J(A)$.
Given such a vertex space, an \emph{arrow space} is a graded $(S,S)$-subbimodule $V$ of $A$
such that
\[
J(A) = V \oplus J(A)^2
\]
as $(S,S)$-bimodules. An \emph{arrow basis} for $V$ is a basis $\mc{B}$ that is
a union of homogeneous bases for each of the summands in the graded vector space decomposition
$V = \bigoplus_{i,j} e_i V e_j$;
that is, $\mc{B} = \{\alpha_1, \dots, \alpha_r\}$ is an (ordered) basis for $V$ such that each
$\alpha_m \in e_i A e_j$ for
some $i,j$.
Fixing  a vertex basis and arrow basis, the \emph{associated weighted quiver} $Q$ has
vertices $1, \dots, n$, and $r$ arrows we label $\wh{\alpha}_m$, where $\wh{\alpha}_m$ is an
arrow from $i \to j$, with the same weight as the degree of $\alpha_m \in A$.
\end{definition}

We note if $A$ is a locally finite elementary algebra and $S \subseteq A$ is any fixed vertex space,
there always exists an arrow space $V$ for $A$. Indeed, note that $S \cong k^n$ is a separable $k$-algebra,
and thus the enveloping algebra $S^e  = S \otimes S\op$ is semisimple. The graded $(S,S)$-bimodule $A$ is
$k$-central, and thus can be viewed as a graded $S^e$-module in the usual way (see~\cite[Section~1]{RR1},
for instance). Thus the inclusion $J(A)^2 \subseteq J(A)$ of graded $S^e$-submodules is split, and there
is an $S^e$-submodule $V$ such that $J(A) = V \oplus J(A)^2$. But this $V$ is simply a graded $k$-central
$(S,S)$-subbimodule.

Recall that the $k$-algebra $A$ is \emph{indecomposable}
if it cannot be written as a product of two nontrivial algebras; this is equivalent to $A$ having no nontrivial central idempotents.  An $\mb{N}$-graded algebra $A$ is indecomposable if and only if it is graded indecomposable, that is, $A$ cannot be written as a product of two nontrivial graded algebras, or equivalently, $A$ has no homogeneous nontrivial central idempotents \cite[Lemma~2.7]{RR1}.

\begin{lemma}
\label{lem:quiver facts}
Suppose that $A$ is a finitely graded elementary $k$-algebra with fixed choice of vertex basis and compatible arrow basis.  Let $Q$
be the associated weighted quiver as described above, and denote $J = J(kQ)$.
\begin{enumerate}
\item There is a graded isomorphism $kQ/I \cong A$, for some homogeneous ideal $I \subseteq J^2$.
The quiver $Q$ is the unique weighted quiver up to isomorphism satisfying this condition.
\item For vertices $i$ and $j$ in $Q$, the number of arrows of weight $m$ from $j$ to $i$ in $Q$ is equal to $\dim_k \Ext^1_A(Se_i, Se_j)_{-m}$.
\item The quiver $Q$ is connected (as an undirected graph) if and only if $A$ is indecomposable as an algebra.
\end{enumerate}
\end{lemma}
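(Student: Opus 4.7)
The plan is to first construct a graded $k$-algebra surjection $\varphi\colon kQ \to A$ using the universal property of $kQ \cong T_S(V)$: the inclusions of the chosen vertex space $S$ and arrow space $V$ into $A$ extend uniquely to a graded homomorphism fixing $S$ and sending each basis arrow $\wh{\alpha}_m$ to $\alpha_m \in V \subseteq A$. Surjectivity follows from the graded Nakayama lemma~\cite[Lemma 2.2]{RR1}: the defining property $J(A) = V \oplus J(A)^2$ implies that $S$ and $V$ generate $A$ as an algebra. To see that the kernel $I$ satisfies $I \subseteq J(kQ)^2$, I would use that $\varphi$ restricts to isomorphisms $S \to S$ and $V \to V$ while $J(kQ) = V \oplus J(kQ)^2$ by Lemma~\ref{lem:path algebra}. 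Decomposing any element of $I$ via $kQ = S \oplus V \oplus J(kQ)^2$ and projecting onto each summand of $A = S \oplus V \oplus J(A)^2$ shows that both the $S$- and $V$-components of any element of $I$ must vanish, yielding the presentation in~(1).

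For~(2) (and the uniqueness claim in~(1) as a byproduct), I would apply Proposition~\ref{prop:resolution form1} to the simple left $A$-module $Se_i$. Its minimal projective cover is $P_0 = Ae_i$, and the first syzygy is $Je_i$. Because $I \subseteq J(kQ)^2$, the image of $Ve_i$ in $A$ is a bimodule complement of $J(A)^2 e_i$ inside $Je_i$, hence a minimal set of generators of $Je_i$ as a graded left $A$-module. An arrow from $j$ to $i$ of weight $m$ corresponds to a basis element in $(e_jVe_i)_m$, contributing exactly one summand $Ae_j(-m)$ to $P_1$. Proposition~\ref{prop:resolution form1} then identifies this count with $\dim_k \Ext^1_A(Se_i, Se_j)_{-m}$, which is intrinsic to $A$ and so determines $Q$ up to isomorphism.

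For~(3), one direction is easy: if the undirected graph of $Q$ decomposes as $Q = Q_1 \sqcup Q_2$, then $f = \sum_{i \in Q_1} e_i$ is a homogeneous central idempotent in $kQ$, and since every path (hence every element of $I \subseteq J^2$) is confined to one component, $f$ descends to a nontrivial central idempotent of $A$. Conversely, suppose $Q$ is connected and $f$ is a central idempotent of $A$; by~\cite[Lemma 2.7]{RR1} we may assume $f \in A_0$ homogeneous of degree~$0$. Primitivity of each $e_i$ in $A$ forces $fe_i \in \{0, e_i\}$, partitioning the vertex set as $E_1 \sqcup E_2$ with $E_k = \{i : fe_i = ke_i\}$ (for the obvious labels). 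For any arrow $\alpha \in e_i V e_j$ with $i \in E_1$, $j \in E_2$, centrality yields $\alpha = fe_i\alpha = \alpha fe_j = 0$, contradicting the injection $V \hookrightarrow A$; the symmetric conclusion rules out arrows from $E_2$ to $E_1$. Connectedness of the undirected $Q$ then forces one of the $E_k$ to be empty, so $f \in \{0,1\}$.

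The main obstacle I foresee is the bookkeeping in~(2): one must check that the arrow basis $Ve_i$ lifts to precisely a minimal generating set for the first syzygy $Je_i$, with the correct multiplicities in each degree and at each vertex, in order to match the output of Proposition~\ref{prop:resolution form1}. This is exactly where the defining property $V \cap J(A)^2 = 0$ of an arrow space, which forced $I \subseteq J(kQ)^2$ in part~(1), does its essential work.
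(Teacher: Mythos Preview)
Your argument is correct and, for parts~(1) and~(2), essentially identical to the paper's: universal property plus the decomposition $kQ = S \oplus \wh{V} \oplus J(kQ)^2$ for~(1), and Proposition~\ref{prop:resolution form1} applied to the first syzygy $Je_i$ with $Je_i/J^2e_i \cong Ve_i$ for~(2).

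For part~(3) you take a slightly different route. The paper passes to the quotient $A/J(A)^2 \cong kQ/J^2$ and invokes \cite[Lemma~II.1.7]{ASS} to say that this truncated path algebra has no nontrivial degree-$0$ central idempotents when $Q$ is connected. You instead argue directly in $A$: after reducing to a degree-$0$ central idempotent $f$ via \cite[Lemma~2.7]{RR1}, primitivity of the $e_i$ forces $f = \sum_{i \in E_1} e_i$ for some subset $E_1$, and then centrality of $f$ on arrows (which are nonzero in $A$ because $V \hookrightarrow A$) shows there are no arrows between $E_1$ and its complement. This is precisely the computation that underlies the cited lemma from \cite{ASS}, so the two arguments are the same at heart; yours is more self-contained, while the paper's is marginally shorter by outsourcing the verification. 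Either way the proof goes through.
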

\begin{proof}
(1) Let the trivial path at vertex $i$ in $Q$ be $\wh{e_i}$.  By the universal property of $kQ$, there exists a unique graded $k$-algebra homomorphism $\phi: kQ \to A$ such that $\phi(\wh{e_i}) = e_i$ for all $i$ and $\phi(\wh{\alpha_i}) = \alpha_i$ for all $i$.  Let $V = k\alpha_1 + \cdots + k \alpha_r$ be the arrow space.  As in the proof of \cite[Lemma~2.3]{RR1}, we have $A = S + V + V^2 + \cdots$ as graded $k$-spaces.    By construction, the image of $\phi$ contains $S$ and $V$, so $\phi$ is surjective.  In fact, by construction $\phi$ induces a graded bijection from the $k$-span of the paths of length $\leq 1$ in $Q$ to $S \oplus V$, and $A = S \oplus V \oplus J^2(A)$.  So $\ker \phi \subseteq J^2$, since $J^2$ is the $k$-span of paths in $Q$ of length at least $2$ by Lemma~\ref{lem:path algebra}(3).
The uniqueness of $Q$ follows directly from part~(2) of the statement, which will be proved next.

(2) The minimal left projective resolution of $Se_i$ begins with
\[
P_1 \to P_0 \overset{\epsilon}{\to} Se_i \to 0.
\]
Clearly $P_0 = Ae_i$, and Proposition~\ref{prop:resolution form1} states $P_1 = \bigoplus_{j, m} A e_j(-m)^{n(Se_i, 1, j,m)}$
for
\[
n(Se_i, 1, j, m) = \dim_k \Ext^1_A(Se_i, Se_j)_{-m}.
\]
  The projective $P_1$ minimally surjects onto $\ker \epsilon = Je_i$, and so we must also have
\[
Je_i/J^2e_i \cong P_1/JP_1 = \bigoplus_{j,m} Se_j(-m)^{n(Se_i, 1, j, m)}
\]
as left modules. Because $\dim_k e_j S e_i = \delta_{ij}$, it follows that
\[
n(Se_i, 1, j, m) = \dim_k (e_jJ e_i/e_j J^2 e_i)_m.
\]
Since the arrow space $V$ satisfies $J = V \oplus J^2$, we have $e_jJe_i/e_jJ^2e_i \cong e_jVe_i$.
Thus
\[
n(Se_i, 1, j, m) = \dim_k (e_j V e_i)_m
\]
is also the number of arrows of weight $m$ from $j$ to $i$ in $Q$.

(3)  Write $A \cong kQ/I$ for the weighted quiver $Q$ and $I \subseteq J^2$, as in part (1).   Let $e \in A$ be
a nontrivial central idempotent.  By the proof of \cite[Lemma 2.7]{RR1}, if $e = e_0 + e_1 + \cdots$ with $e_i \in A_i$, then $e_0$ is also a nontrivial central idempotent in $A$.   Since $I \subseteq J^2$, we have $kQ/J^2 \cong A/J^2(A)$ as rings, and thus $kQ/J^2$ has a nontrivial central idempotent of degree $0$.  The latter ring can be identified with the path algebra of $Q$ with relations given by every path of length $2$.  It is easy to see that such a ring has no nontrivial central idempotents of degree $0$ when $Q$ is connected; see \cite[Lemma II.1.7]{ASS} for the idea (having weighted arrows makes no difference to the proof).
 \end{proof}

\begin{definition}
Given a locally finite elementary $k$-algebra $A$, we call the weighted quiver $Q$ in the theorem above the
\emph{underlying (weighted) quiver} of the algebra $A$.  We define the (weighted) incidence matrix of $Q$ to be
$N(t) = \sum_{m \geq 0} V_m t^m \in M_n(\mb{Z}[t])$, where $V_m \in M_n(\mb{Z})$ is a matrix with $(V_m)_{ij}$ equal to the number of arrows from $i$ to $j$ of weight $m$.  The usual unweighted incidence matrix of $Q$ is $M \in M_n(\mb{Z})$ where $M_{ij}$ is the number of
arrows of any weight from $i$ to $j$, so that $M = N(1)$.
\end{definition}

\begin{example}
Let $Q$ be a quiver with two vertices $1, 2$ and one arrow $\alpha$ from $1$ to $2$, so $kQ = k e_1 + k \alpha + k e_2$.  Then $kQ$ has the obvious vertex basis $\{e_1, e_2 \}$ and arrow basis $\{ \alpha \}$, but the algebra $A = kQ$ also
has vertex basis $\{e_1 + c \alpha, e_2 - c \alpha \}$ for any $c \in k$ (with the same arrow basis).

On the other hand, even when a vertex basis is fixed, a path algebra $kQ$ will generally have many arrow bases, in particular
when the quiver has multiple arrows.  For example, let $Q$ be a quiver with one vertex and two loops $x, y$ of weights $1$ and $2$ respectively, with vertex basis $ \{e_1 \}$, so that $kQ \cong k\langle x,y \rangle$.   Of course $\{x, y \}$ is an arrow basis, but so is $\{x, y + x^2 \}$.
\end{example}

In the next result, we relate the beginning of the minimal projective resolutions of the simple modules over a factor of a finitely graded elementary $k$-algebra $A$ to the generators and relations of an algebra.  Again, this is a rather standard affair, but because we work in the setting of weighted quivers we give details for lack of an exact reference.
Let $I$ be a ideal of an algebra $A$.  A subset $X \subseteq I$ of an algebra is said to \emph{minimally generate} the ideal $I$ if no proper subset of $X$ generates $I$ as a $2$-sided ideal.  For a fixed decomposition $1 = e_1 + \cdots + e_n$ of $1$ as a sum of primitive orthogonal idempotents in $A$, we say that a subset $Y$ of $A$ is \emph{$S$-compatible}  if each $y \in Y$ is in $e_j A_i e_k$ for some $i, j, k$.

\begin{lemma}
\label{lem:minres}
Let $Q$ be a weighted quiver and let $A = kQ/I$ for some homogeneous $I \subseteq J^2(kQ)$.  Write $e_i$ for the trivial path at vertex $i$ in $kQ$, identify it with its image in $A$ which we also write as $e_i$, and let $S = ke_1 + \cdots + ke_n$.  Let $X = \{x_1, \dots, x_p\} \subseteq J(kQ)$ be a $k$-linearly independent $S$-compatible set and  let $V = kX = kx_1 + \cdots + k x_p$.  Let $G = \{ g_m \mid 1 \leq m \leq b \}$ be an $S$-compatible set of elements in $I$.  Write $\deg g_i= s_i$, and suppose that we can write $g_i = \sum_j g_{ij} x_j$ for some elements $g_{ij} \in kQ$.   Let $\overline{x}$ be the image in $A$ of an element $x$ of $kQ$.

\begin{enumerate}
\item The complex
\begin{equation}
\label{eq:begin}
\xymatrix{
\displaystyle{\bigoplus_{i = 1}^b Ae_{m_i}(-s_i)} \ar[rr]^{\delta_2 = \left(\begin{smallmatrix}\overline{g_{ij}} \end{smallmatrix}\right)}
& & \displaystyle{\bigoplus_{j=1}^p Ae_{k_j}(-d_j)} \ar[rr]^{\delta_1 =  \left(\begin{smallmatrix} \overline{x_1} \\ \vdots \\ \overline{x_p} \end{smallmatrix}\right)}
& & A \to S \to 0
}
\end{equation}
(where the free modules consist of row vectors and the maps are right multiplication by the indicated matrices)
is the beginning of a minimal graded projective resolution of the left $A$-module $S$ if and only if $V$ is an arrow space for $kQ$
and the $ \{ g_i \}$ minimally generate the ideal $I$ of relations.

\item
Similarly, if $X \subseteq J(kQ)e_r$ and $G \subseteq kQ e_r$,
the complex
\begin{equation}
\label{eq:begin2}
\xymatrix{
\displaystyle{\bigoplus_{1 = 1}^b Ae_{m_i}(-s_i)} \ar[rr]^{\delta_2 = \left(\begin{smallmatrix}\overline{g_{ij}} \end{smallmatrix}\right)}
&& \displaystyle{\bigoplus_{j=1}^p Ae_{k_j}(-d_j)} \ar[rr]^{\delta_1 =  \left(\begin{smallmatrix} \overline{x_1} \\ \vdots \\ \overline{x_p} \end{smallmatrix}\right)}
&&  Ae_r \to Se_r \to 0
}
\end{equation}
is the beginning of a minimal projective resolution of the simple left $A$-module $Se_r$ if and only if
$V e_r\oplus J^2(KQ)e_r = J(KQ)e_r$ as $k$-spaces, and there is an $S$-compatible set $H$ which minimally
generates the ideal $I$ of relations, such that  $H \cap kQ e_r = G$.
\end{enumerate}
\end{lemma}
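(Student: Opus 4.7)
The plan for Part~(1) is to verify the complex is the beginning of a minimal projective resolution of $S$ by checking three conditions: (a) $\delta_1$ is a minimal surjection onto $\ker(A\to S) = J(A)$; (b) $\image\delta_2 = \ker\delta_1$; and (c) $\delta_2$ is a minimal surjection onto $\ker\delta_1$. The augmentation $A\to S$ is automatically a minimal surjection, since $\ker = J(A) \subseteq J\cdot A$. For (a), the hypothesis $I \subseteq J(kQ)^2$ yields $J(A)/J(A)^2 \cong J(kQ)/J(kQ)^2$, and the induced map $P_1/JP_1 \to J(A)/J(A)^2$ sends the $j$th standard generator to $\overline{x_j}$; hence (a) is equivalent to $\{x_j\}$ being a $k$-basis for $J(kQ)/J(kQ)^2$, that is, to $V = kX$ being an arrow space for $kQ$.

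For (b) and (c), the key is to identify $\ker\delta_1$ with $I/IJ$ by computing $\Tor_1^{kQ}(A,S)$ in two ways. Because $kQ$ is hereditary, once $V$ is an arrow space the sequence $0 \to \bigoplus kQ e_{k_j}(-d_j) \to kQ \to S \to 0$ is the minimal projective resolution of $S$ over $kQ$, so our complex $P_1 \to A \to S \to 0$ arises by applying $A \otimes_{kQ} -$, giving $\ker\delta_1 = \Tor_1^{kQ}(A,S)$. Computing the same Tor via the projective right-$kQ$-resolution $0 \to I \to kQ \to A \to 0$ (available because $kQ$ is hereditary, so $I$ is right-projective), and using that $I \subseteq J$ annihilates $S$, gives $\Tor_1^{kQ}(A,S) = I \otimes_{kQ} S = I/IJ$. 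Tracing through the identifications, the isomorphism $\ker\delta_1 \cong I/IJ$ carries $(\overline{a_j})$ to $\sum a_j x_j + IJ$, so $\image\delta_2$ corresponds to the left $A$-submodule of $I/IJ$ generated by the classes of $\{g_i\}$.

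Condition (b) then reduces to the assertion that $\{g_i\}$ generates $I/IJ$ as a left $A$-module if and only if $\{g_i\}$ generates $I$ as a two-sided ideal. The forward direction is direct: any two-sided combination $\sum\alpha_k g_{i_k}\beta_k$ simplifies modulo $IJ$ using $g_i\cdot J \subseteq IJ$ and the decomposition $kQ = S \oplus J$. The reverse direction is a graded Nakayama iteration: $I = I' + IJ^n$ for all $n$, where $I' := \sum kQ\,g_i\,kQ$ is the two-sided ideal generated by $\{g_i\}$; Lemma~\ref{lem:path algebra}(2) supplies $(J^n)_d = 0$ for $n$ sufficiently large, forcing $I_d = I'_d$ in every degree. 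Condition (c) is equivalent to $\{g_i\}$ projecting to a $k$-basis of $(I/IJ)/(J\cdot(I/IJ)) \cong I/(IJ+JI)$, which (using $S$-compatibility to reduce bimodule generation to $k$-linear spanning) is the standard characterization of a minimal generating set of $I$ as a two-sided ideal. Together (b) and (c) yield the stated condition.

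Part~(2) reduces to Part~(1) by an extend-and-restrict argument. For $(\Leftarrow)$, extend $V$ to an arrow space $V'$ of $kQ$ by adjoining graded $S$-bimodule complements of $J^2 e_{r'}$ in $Je_{r'}$ for each $r' \neq r$ (which exist because $S^e$ is semisimple), and extend $G$ to the minimal $S$-compatible generating set $H$ provided by the hypothesis. Applying Part~(1) produces the minimal projective resolution of $S$; since $S = \bigoplus Se_{r'}$, this resolution splits as a direct sum of minimal resolutions of the $Se_{r'}$ indexed by target vertex (the summand $Ae_{k_j}(-d_j)$ at $P_1$ belongs to the $Se_{r'}$-piece when $x_j \in Je_{r'}$, and similarly at $P_2$). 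The $r$-summand is exactly the complex of Part~(2), since $H \cap kQe_r = G$ and $X$ is the $r$-column of the extended arrow basis. For $(\Rightarrow)$, apply the analysis of Part~(1) restricted to the $r$-column: minimality of $\delta_1$ gives $V \oplus J^2 e_r = Je_r$, and minimality of $\delta_2$ forces $G$ to project to a basis of $Ie_r/(IJe_r + JIe_r)$; the direct sum decomposition $I/(IJ+JI) = \bigoplus_{r'} Ie_{r'}/(IJe_{r'} + JIe_{r'})$ then allows one to extend this basis to a minimal $S$-compatible generating set $H$ with $H \cap kQe_r = G$. The anticipated main obstacle is the Nakayama iteration in the $(\Leftarrow)$ direction of (b): showing $\bigcap_n IJ^n = 0$ in each fixed degree is not automatic in the weighted setting, but is resolved by Lemma~\ref{lem:path algebra}(2): if $L$ is the longest path length in the acyclic $Q_0$, any length-$n$ path in $Q$ has degree $\geq n/(L+1)$, so $(IJ^n)_d = 0$ once $n > d(L+1)$.
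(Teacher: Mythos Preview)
Your proof is correct, and for Part~(2) it follows essentially the same extend-and-restrict strategy as the paper. For Part~(1), however, you take a genuinely different route.

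The paper argues directly: it introduces the set $Y = \{(h_1,\dots,h_p) \in \bigoplus kQe_{k_j}(-d_j) \mid \sum h_j x_j \in I\}$ and states a chain of four equivalent conditions, culminating in ``$\{f_i\}$ generates $I$ as a two-sided ideal $\Leftrightarrow$ $\ker\delta_1 = \sum_i A(\overline{f_{i1}},\dots,\overline{f_{ip}})$'', leaving the verifications as ``straightforward arguments.'' Minimality is then read off by removing redundant generators. This is elementary and self-contained.

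Your approach is more structural: you identify $\ker\delta_1$ with $I/IJ$ as a left $A$-module, then reduce (b) and (c) to standard Nakayama-type statements about generation and minimal generation of $I$ in terms of the quotient $I/(IJ + JI)$. The $\Tor$ framing is a nice way to motivate the isomorphism $\ker\delta_1 \cong I/IJ$, though strictly speaking the left $A$-module structure does not come for free from the second resolution (since $I$ is only a left $kQ$-module, not a left $A$-module); you really need the explicit map $(\overline{a_j}) \mapsto \sum a_j x_j + IJ$ that you write down, together with the observation $I\cdot I \subseteq IJ$, and you should verify it is an isomorphism directly. This uses that $\bigoplus_j kQe_{k_j}(-d_j) \to J$ is an isomorphism when $V$ is an arrow space, which in turn uses that $kQ$ is hereditary and a Hilbert series comparison --- facts the paper does not need.

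What your approach buys is a cleaner conceptual picture: the space $I/(IJ+JI)$ is exactly the ``space of minimal relations,'' and (b),(c) become transparent once you see $\ker\delta_1/J\ker\delta_1 \cong I/(IJ+JI)$. The paper's approach buys directness and avoids the hereditary hypothesis on $kQ$. Your careful handling of the graded Nakayama iteration via Lemma~\ref{lem:path algebra}(2) is a point the paper glosses over entirely, so in that respect your argument is more complete.
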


\begin{proof}
(1)
It is clear that for the first complex to be exact at $A = \bigoplus_{i=1}^n Ae_i$ we need $\delta_1$ to be a minimal surjection
onto $J(A)$.  By Nakayama's lemma, this is if and only if $\overline{x_1}, \dots, \overline{x_p}$ have images in $J(A)/J^2(A)$ which
are a $k$-basis.  Since $I \subseteq J^2(kQ)$, we have an identification $J(A)/J^2(A) = J(kQ)/J^2(kQ)$ and it is equivalent to
demand that $x_1, \dots, x_p$ have images in $J(kQ)/J^2(kQ)$ which are a $k$-basis.  This is the same as the
the condition that $V$ is an arrow space in $kQ$.

Assuming that $V$ is an arrow space, we claim that $\delta_2$ is a minimal surjection onto $\ker \delta_1$ if and only if the set $\{ g_i \}$ minimally generates the ideal $I$ of relations.  This is proved by a similar argument as in the connected graded case, as can be found in \cite[Lemma~2.1.3]{Ro}, but we give details here for the reader's convenience.

Consider $\ker \delta_1 = \{ (h_1, \dots, h_p)  \mid \sum_j h_j \overline{x_j}=0 \}$, which is a left $A$-submodule  
of $\bigoplus_{j=1}^p Ae_{k_j}(-d_j)$, and define $Y = \{ (h_1, \dots, h_p) \mid \sum_j h_j x_j \in I \} \subseteq \bigoplus_{j=1}^p kQe_{k_j}(-d_j)$.
Suppose that we are given a set of homogeneous elements
$\{ f_{ij} \mid 1 \leq i \leq b, 1 \leq j \leq p \} \subseteq kQ$, such that $f_i = \sum_j f_{ij} x_j \in I$ for all $i$, where
the set of elements $\{f_i \}$ is $S$-compatible.
Straightforward arguments prove that the following are all equivalent:
\begin{enumerate}
\item $\{ f_i \}$ generates $I$ as a 2-sided ideal of $kQ$.
\item $I = I x_1 + \cdots + I x_p + \textstyle \sum_i kQ f_i$.
\item $Y = I^{\oplus n} + \sum_i kQ (f_{i1}, \dots, f_{ip})$.
\item $\ker \delta_1 =  \sum_i A (\overline{f_{i1}}, \dots, \overline{f_{ip}})$.
\end{enumerate}
It follows that the set $\{ g_i \}$ generates $I$ as a $2$-sided ideal if and only if $\im \delta_2 = \ker \delta_1$.
Then $\{ g_i \}$ minimally generates $I$ if and only if $\delta_2$ is a minimal surjection onto $\ker \delta_1$.

(2)  The same argument as in the first part clearly shows that exactness at $Ae_r$
is equivalent to $V$ satisfying $V \oplus J^2(kQ)e_r = J(kQ)e_r$.   Assuming this holds,
given a complex as in \eqref{eq:begin2}, taking a direct sum of this complex with
the beginning of minimal projective resolutions of the simple modules $Se_j$ for all $j \neq r$ gives a potential minimal projective
resolution of $S$ as in \eqref{eq:begin}, which is the beginning of a minimal projective resolution of $S$ if and only if the complex in \eqref{eq:begin2} is the beginning of a minimal projective resolution of $Se_r$.  By the result of the first part,
this is if and only if the $\{ g_i \}$ are the part of a $S$-compatible minimal generating set of $I$ consisting of those relations
ending at vertex $r$, as required.
\end{proof}

\section{Matrix Hilbert series of homologically smooth elementary algebras}

In this section, we show how the results on matrix Hilbert series from the previous section apply to elementary algebras whose simple modules have nice projective resolutions.

We begin with some necessary definitions.
Recall that a module $M$ over an algebra $A$ is called \emph{perfect} if $M$ has a finite length projective resolution $0 \to P_d \to P_{d-1} \to \cdots \to P_{0} \to M \to 0$ where each $P_i$ is a finitely generated projective $A$-module.   If
$A$ is $\mb{N}$-graded and $M$ is a graded $A$-module, perfect is equivalent to \emph{graded perfect}, where we demand that
the $P_i$ are graded projective and the maps in the resolution are homogeneous of degree $0$ \cite[Lemma 2.4]{RR1}.

For a $k$-algebra $A$, let $A^e = A \otimes_k A^{\op}$ be its \emph{enveloping algebra}.  Note that $(A, A)$-bimodules are the same thing as left $A^e$-modules; in particular $A$ has a natural left $A^e$-structure given by its canonical $(A, A)$-bimodule structure.
An algebra $A$ is said to be \emph{homologically smooth} if $A$ is perfect as a left $A^e$-module.  If $A$ is $\mb{N}$-graded, then $A$ is homologically smooth if and only if it is \emph{graded} homologically smooth: that is, there is a graded projective resolution $0 \to P_d \to \cdots \to P_1 \to P_0 \to A \to 0$ where each $P_i$ is a finitely generated graded projective $A^e$-module.

Recall that a finite-dimensional semisimple $k$-algebra $B$ is \emph{separable} if $B \otimes_k L$ remains semisimple for all field extensions $k \subseteq L$.  In \cite{RR1} we show the following, based on a result due to Jeremy Rickard.

\begin{lemma}\label{lem:smooth separable}
(\cite[Theorem~3.10]{RR1})
Let $A$ be a locally finite $\mb{N}$-graded $k$-algebra with $S = A/J(A)$.  Then $A$ is homologically smooth over $k$ if and only if
$S$ is separable as a $k$-algebra and $S$ is a perfect left $A$-module.
\end{lemma}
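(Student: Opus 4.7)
The plan is to prove the two directions separately. For the forward implication I need to establish both that $S$ is separable as a $k$-algebra and that $S$ is perfect as a left $A$-module; I treat these as independent sub-claims.

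To see that $S$ is perfect over $A$, I would take a finite graded projective resolution $P_\bullet \to A$ of $A$ by finitely generated projective $A^e$-modules, and apply $-\otimes_A S$. The key observation is that $A^e = A \otimes_k A^\op$ is free as a right $A$-module via its second tensor factor, so each $P_i$ is flat as a right $A$-module and the tensored complex $P_\bullet \otimes_A S \to S$ remains exact. Each term $P_i \otimes_A S$ is a direct summand of $(A^e)^{n_i} \otimes_A S \cong (A \otimes_k S)^{n_i}$, which is a finitely generated free left $A$-module because $\dim_k S < \infty$. This produces a finite resolution of $S$ by finitely generated graded projective left $A$-modules.

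To see that $S$ is separable, I would follow the Rickard approach and pass from perfection of $A$ over $A^e$ to projectivity of $S$ over $S^e$ via the surjection $A^e \twoheadrightarrow S^e$ induced by $A \to S$. Forming the complex $S \otimes_A P_\bullet \otimes_A S$, each term is a direct summand of $S \otimes_A (A^e)^{n_i} \otimes_A S \cong (S \otimes_k S)^{n_i} = (S^e)^{n_i}$, yielding a bounded complex of finitely generated projective $S^e$-modules. The delicate step is to track the homology of this complex carefully and deduce that $S$ has finite projective dimension as an $S^e$-module; combined with the semisimplicity of $S$, this forces $S$ to be projective over $S^e$, which is the definition of separability.

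For the reverse implication, assume $S$ is separable and perfect as a left $A$-module. Wedderburn--Malcev (applicable since $S$ is separable) lets me identify $S$ with a subalgebra of $A_0 \subseteq A$, so the relative bar complex
\begin{equation*}
\cdots \to A \otimes_S A \otimes_S A \to A \otimes_S A \to A
\end{equation*}
is well-defined and exact. Separability of $S$ provides a separability idempotent in $S^e$ that exhibits each $A^{\otimes_S n}$ as a direct summand of $A^{\otimes_k n}$, hence as a projective $A^e$-module. The finite projective resolution of $S$ as a left $A$-module then supplies the control needed to truncate this bar complex to a bounded complex of finitely generated projective $A^e$-modules resolving $A$, so that $A$ is perfect over $A^e$.

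The main obstacle is the separability half of the forward direction. The obvious derived-tensor strategy $S \otimes_A^L A \otimes_A^L S$ does not immediately give an exact sequence of $S^e$-modules because $S$ is generally not flat over $A$, so extra work is required to track the homology and upgrade ``finite projective dimension'' to ``projective'' over $S^e$. This is precisely the technical point that lies at the heart of \cite[Theorem 3.10]{RR1}.
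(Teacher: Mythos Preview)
The paper gives no proof of this lemma; it simply cites \cite[Theorem~3.10]{RR1}. So there is no argument in the present paper to compare against, and your sketch is being measured against what such a proof would have to contain.

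Your forward argument that $S$ is perfect over $A$ is correct and standard. You also correctly flag that the separability half is the genuinely difficult part and defer it to the Rickard-type argument in \cite{RR1}; that is honest, though it means this direction is not actually proved in your write-up.

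The reverse direction, however, has a real gap. The relative bar terms $A^{\otimes_S (n+2)}$ are projective over $A^e$ when $S$ is separable, but for $n \geq 1$ they are \emph{not} finitely generated as $A^e$-modules: the $A^e$-action touches only the two outer tensor factors, so $A \otimes_S A \otimes_S A$ is finitely generated over $A^e$ only if the middle $A$ is finite-dimensional over $k$. Consequently, no truncation or syzygy of the bar complex will produce a bounded complex of \emph{finitely generated} projectives. The phrase ``the finite projective resolution of $S$ \ldots supplies the control needed to truncate'' does not name an actual construction. What is needed instead is to build the $A^e$-resolution directly from the one-sided data: use separability to make the minimal left resolution $P_\bullet \to S$ into a complex of $(A,S)$-bimodules with each $P_i \cong A \otimes_S V_i$ for finite-dimensional $S^e$-modules $V_i$, and then pass to the complex with terms $A \otimes_S V_i \otimes_S A$, which are finitely generated projective over $A^e$. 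Checking that this (or a totalization of it) resolves $A$ is the substance of the argument in \cite{RR1}, and it is not a consequence of truncating the bar complex.
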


\noindent
Now note that if $A$ is a locally finite elementary graded $k$-algebra, so that $S = A/J \cong k^{n}$, then
$S$ is obviously separable over $k$.  So in the elementary case, homological smoothness of $A$ over $k$ is equivalent to $S$ being perfect, or equivalently, to $Se_i$ being perfect for all $i$ where $1 = e_1 + \cdots + e_n$ is an idempotent decomposition.
We see next that this condition also guarantees that the matrix Hilbert series of an elementary algebra has a nice form.
\begin{proposition}
\label{prop:perfect-hs}
Let $A$ be a locally finite elementary $k$-algebra, with fixed decomposition $1 = e_1 + \cdots + e_n$ of $1$ as a sum of primitive pairwise orthogonal idempotents.   Assume that $A$ is homologically smooth over $k$.
  \begin{enumerate}
\item
The matrix Hilbert series of $A$ is of the form $h_A(t) = q(t)^{-1}$ for some matrix polynomial $q(t) \in M_n(\mb{Z}[t])$.   If
$D(t) = \det q(t) \in \mb{Z}[t]$, then $D(0) = \pm 1$.

\item  If $M$ is a perfect $\mb{Z}$-graded left $A$-module, then the vector Hilbert series $h_M(t)$ has the form $(r_1(t) D(t)^{-1}, r_2(t) D(t)^{-1}, \dots, r_n(t) D(t)^{-1})^T$ for some Laurent polynomials $r_i(t) \in \mb{Z}[t, t^{-1}]$.

\item If $M$ is a perfect graded left $A$-module with total Hilbert series $h(t) = h\tot_M(t)$ and with $\GKdim(M) = m < \infty$,
then $\eps(h(t))$ is positive and an integer multiple of $\eps(D(t))^{-1}$.

\item Let $0 \to M \to N \to P \to 0$ be a short exact sequence of perfect graded $A$-modules of finite GK-dimension, with graded homomorphisms.
Then
\[
\GKdim(N) = \max(\GKdim(M), \GKdim(P)).
\]
Moreover, if $\GKdim(M) = \GKdim(P)$ then the multiplicities satisfy $\eps(N) = \eps(M) + \eps(P)$.
\end{enumerate}
\end{proposition}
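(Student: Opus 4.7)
The plan is to deduce all four parts by systematically using additivity of Hilbert series on exact sequences of finite-dimensional vector spaces, with the matrix identity $h_A(t) = q(t)^{-1}$ from part~(1) serving as the foundation for everything else.

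For (1), since $A$ is elementary and homologically smooth, Lemma~\ref{lem:smooth separable} yields that $S$ is perfect as a left $A$-module, so each simple $Se_i$ admits a finite minimal graded projective resolution $P^{(i)}_\bullet \to Se_i$ by Proposition~\ref{prop:resolution form1}. Each $P^{(i)}_j$ is a finite sum of shifted projectives $Ae_k(-m)$, whose vector Hilbert series is $t^m$ times the $k$th column of $h_A(t)$; hence $h_{P^{(i)}_j}(t) = h_A(t)\, n^{(i)}_j(t)$ for some column vector $n^{(i)}_j(t) \in \mb{Z}[t]^n$. Additivity of dimensions on each graded piece of the resolution gives
\[
h_{Se_i}(t) = \sum_j (-1)^j h_{P^{(i)}_j}(t) = h_A(t)\, v^{(i)}(t)
\]
with $v^{(i)}(t) \in \mb{Z}[t]^n$. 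Since $h_{Se_i}(t)$ is the $i$th standard basis vector, assembling the $v^{(i)}$ as columns of a matrix polynomial $q(t) \in M_n(\mb{Z}[t])$ produces $h_A(t)\, q(t) = I$, and hence $h_A(t) = q(t)^{-1}$. Evaluating at $t = 0$ gives $h_A(0)\, q(0) = I$ in $M_n(\mb{Z})$; taking determinants forces $D(0) \cdot \det h_A(0) = 1$, whence $D(0) = \pm 1$.

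For (2), the same Euler characteristic argument applied to a finite minimal graded projective resolution of the perfect module $M$ yields $h_M(t) = h_A(t)\, v(t)$ for some $v(t) \in \mb{Z}[t, t^{-1}]^n$ (Laurent polynomials allowed since $M$ may have generators in arbitrary degrees). Writing $q(t)^{-1} = D(t)^{-1} c(t)$ via the adjugate matrix $c(t) \in M_n(\mb{Z}[t])$, we obtain $h_M(t) = D(t)^{-1} c(t)\, v(t)$, whose $i$th entry is $r_i(t) D(t)^{-1}$ with $r_i(t) = \sum_j c_{ij}(t)\, v_j(t) \in \mb{Z}[t, t^{-1}]$. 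For (3), summing these entries gives $h\tot_M(t) = R(t)/D(t)$ with $R(t) = \sum_i r_i(t) \in \mb{Z}[t, t^{-1}]$. Putting this in lowest terms $h\tot_M(t) = f(t)/g(t)$ with $g(t) \in \mb{Z}[t]$ dividing $D(t)$ (so $g(0) = \pm 1$ since $D(0) = \pm 1$), Lemma~\ref{lem:rational series}(3) gives $\eps(h\tot_M) > 0$ and $\eps(h\tot_M) \in \eps(g)^{-1} \mb{Z}$. Since $D = g \cdot (D/g)$ with $D/g \in \mb{Z}[t]$, multiplicativity of $\eps$ yields $\eps(D)/\eps(g) = \eps(D/g) \in \mb{Z}$, so $\eps(g)^{-1} \mb{Z} \subseteq \eps(D)^{-1}\mb{Z}$ and the claim follows.

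For (4), additivity of total Hilbert series on short exact sequences gives $h\tot_N(t) = h\tot_M(t) + h\tot_P(t)$. Setting $m = \GKdim(M)$ and $p = \GKdim(P)$, Lemma~\ref{lem:rational series}(1)(2) produces expansions $h\tot_M(t) = \eps(M)(1-t)^{-m} + \cdots$ and $h\tot_P(t) = \eps(P)(1-t)^{-p} + \cdots$ in powers of $(1-t)$, where the leading coefficients are both strictly positive by part~(3). If $m \neq p$, the higher-order pole dominates the sum; if $m = p$, the leading coefficients combine to $\eps(M) + \eps(P) > 0$ so no cancellation occurs. In every case the pole at $t = 1$ of $h\tot_N$ has order exactly $\max(m, p)$, giving $\GKdim(N) = \max(m, p)$ by Lemma~\ref{lem:rational series}(2), and in the equal-dimension case we read off $\eps(N) = \eps(M) + \eps(P)$. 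The only real subtlety throughout is the positivity of $\eps(M)$ and $\eps(P)$ supplied by part~(3), without which the leading terms in (4) could cancel and the conclusion could fail.
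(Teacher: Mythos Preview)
Your proof is correct and follows essentially the same approach as the paper: in each part you use additivity of (vector or total) Hilbert series along a finite projective resolution to reduce to the matrix identity $h_A(t)q(t)=I$, then exploit the adjugate formula and Lemma~\ref{lem:rational series} exactly as the authors do. The only cosmetic difference is that you evaluate at $t=0$ before taking determinants in part~(1), whereas the paper takes determinants in $M_n(\mb{Z}[[t]])$ first; both give $D(0)=\pm 1$ immediately.
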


\begin{proof}
(1)  As noted above, homological smoothness guarantees that each simple module $Se_r$ is perfect.
The minimal graded projective resolution of the simple module $S e_r$ has the form $P_{\bullet} \to S e_r \to 0$,
where $P_i = \bigoplus_{j,m} [ Ae_j(-m)]^{n(Se_r, i, j, m)}$, in the notation of Proposition~\ref{prop:resolution form1}.
By assumption, the numbers $n(Se_r, i, j,m)$ are finite and are $0$ for $i \gg 0$.

Now each $Ae_j$ has a vector Hilbert series
$h_{Ae_j}(t) = (h_{e_1 A e_j}(t), \dots, h_{e_n A e_j}(t))^T$.  From the projective resolution of $Se_r$ we determine
that the $r$th standard basis vector $\mathbf{e}_r = (0, \dots, 1, \dots, 0)^T$ can be written as
\[
\mathbf{e}_r  = \sum_i (-1)^i \sum_{j,m} n(Se_r, i, j, m) t^m h_{Ae_j}(t) = \sum_j q_{jr}(t) h_{Ae_j}(t),
\]
where $q_{jr}(t) = \sum_{i, m} (-1)^i n(Se_r, i, j, m)t^m \in \mb{Z}[t]$.

Now letting $q(t) = (q_{jr}(t)) \in M_n(\mb{Z})$, and letting $h_A(t)$ be the matrix Hilbert series of $A$,
the $(m,r) $-entry of $h_A(t) q(t)$ is $\sum_{j=1}^n h_{e_m Ae_j}(t) q_{jr}(t)$, which as we saw above, is also equal to the $m$th coordinate of the $r$th standard basis vector, and so is equal to $\delta_{mr}$.  Thus $h_A(t) q(t) = I$ and $h_A(t) = q(t)^{-1}$ as claimed.

Since $h_A(t) q(t) = I$ holds in the matrix ring $M_n(\mb{Z}[[t]])$, taking determinants yields
$\det h_A(t) \det q(t) = 1$.  Since $\det h_A(t)$ and $\det q(t)$ are in $\mb{Z}[[t]]$, this forces both $\det h_A(t)$ and $\det q(t)$
to have constant term $\pm 1$.

(2) Let $P_{\bullet} \to M$ be a minimal graded projective resolution of $M$, and write $P_i = \sum_{j,m} [Ae_j(-m)]^{n(M, i,j,m)}$, where again the numbers $n(M,i,j,m)$ are finite since $M$ is perfect.

The same argument as in part (1) shows that the vector Hilbert series of $M$ satisfies
\[
h_M(t) = \sum_i (-1)^i \sum_{j,m} n(M,i,j,m) t^m h_{Ae_j}(t) = \sum_j p_j(t) h_{Ae_j}(t)
\]
where $p_j(t) = \sum_{i,m} (-1)^i n(M,i,j,m)t^m \in \mb{Z}[t,t^{-1}]$.

Now since $h_{Ae_j}(t)$ is equal to the $j$th column of the matrix Hilbert series $h_A(t) = q(t)^{-1}$, we
get
\[
h_M(t) = q(t)^{-1} \begin{pmatrix} p_1(t) \\ p_2(t) \\ \vdots \\ p_n(t) \end{pmatrix}.
\]
In particular, since $q(t)^{-1} = c(t) D(t)^{-1}$ where $c(t) \in \mb{Z}[t]$ is the adjugate matrix of $q(t)$,  we get that
the $j$th entry of $h_M(t)$ is $r_j(t) D(t)^{-1}$ where $r_j(t) = \sum_{i=1}^n c_{ji}(t) p_i(t) \in \mb{Z}[t, t^{-1}]$.

(3) The total Hilbert function of $M$ can be found by summing the terms of the vector Hilbert series calculated in part (2),
and thus it has the form $h\tot_M(t) = h(t) = r(t)D(t)^{-1}$.   By cancelling common factors in $\mb{Z}[t]$, this has an expression as $h(t) = f(t)g(t)^{-1}$ with $f,g \in \Z[t]$ relatively prime, and we still have $g(0) = \pm 1$.
By Lemma~\ref{lem:rational series}(1), since $\GKdim(M) < \infty$ all roots of $g(t)$ are roots of unity.  Then by Lemma~\ref{lem:rational series}(3), $\eps(h(t))$ is positive and an integer multiple of $\eps(g(t))^{-1}$.

Write $D(t) = g(t) s(t)$ where $s(t) \in \mb{Z}[t]$.  As we saw in the proof of Lemma~\ref{lem:rational series}, $\eps(s(t)) \in \mb{Z}$ since $s(t)$ has integer coefficients.  Then
$\eps(D(t)) = \eps(g(t)) \eps(s(t))$ and so $\eps(g(t))^{-1}$ is an integer multiple of $\eps(D(t))^{-1}$.  It follows that $\eps(h(t))$ is also an integer multiple of $\eps(D(t))^{-1}$ as required.

(4)  If $M$ is a perfect graded $A$-module, as already noted in part (3) its total Hilbert series $h(t)$ is rational and can
be written in lowest terms as $h(t) = f(t)g(t)^{-1}$ where $f(t) \in \mb{Z}[t, t^{-1}]$, $g(t) \in \mb{Z}[t]$, $g(0) = \pm 1$ and $f(t), g(t)$ have no common factors.  By Lemma~\ref{lem:rational series}(2) if $\GK(M) = m < \infty$, then $m$ is
the order of the pole of $h(t)$ at $t = 1$.  Then by Lemma~\ref{lem:rational series}(3),
\[
h(t) = \eps(M) (1-t)^{-m} + a_{-m+1} (1-t)^{-m+1} + \cdots,
\]
where the multiplicity of $M$ is $\eps(M) = \eps(h(t)) > 0$.   Now if $0 \to M \to N \to P \to 0$ is an exact sequence of perfect modules of finite GK-dimension with graded homomorphisms, then we get $h\tot_N(t) = h\tot_M(t) + h\tot_P(t)$.  Applying the observations above for each of $M$, $N$, and $P$,
it is easy to see that $\GK(N) = \max(\GKdim(M), \GKdim(P))$ as claimed.  Also, in case $\GK(M) = \GK(P) = m$
then $\GK(N) = m$ also and so $\eps(N) = \eps(M) + \eps(P)$ is immediate.
\end{proof}

To end this section we discuss how in most cases, to prove results about locally finite graded algebras $A$ that are not necessarily elementary, particularly regarding GK-dimension, one can often pass to the elementary case in a standard way.  The key result is the following theorem.
For the definition of graded Morita equivalence, we refer readers to~\cite[p.~496]{Sierra}.

\begin{theorem}
\label{thm:reduce-elem}
Let $A$ be a locally finite graded $k$-algebra, with $S = A/J(A)$.  Assume that $S$ is separable over $k$.
\begin{enumerate}
\item There is a finite degree separable field extension $k \subseteq L$ such that
$S \otimes_k L$ is isomorphic to a product of matrix rings over $L$, and $A' = A \otimes_k L$ is a locally finite $L$-algebra
with $J(A') = J(A) \otimes_k L$, $A'/J(A') \cong S \otimes_k L$,
and $\GKdim_L(A') = \GKdim_k(A)$.
\item There is a full idempotent $e \in A'_0$ such that $A'' = eA'e$ is graded Morita equivalent to $A'$ and $A''$ is an elementary locally finite graded $L$-algebra with  $\GKdim_L(A'') = \GKdim_L(A')$.
\end{enumerate}
\end{theorem}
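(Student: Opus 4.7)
For part~(1), my plan is to use the theory of separable algebras directly. Since $S$ is a finite-dimensional separable $k$-algebra, by Wedderburn's theorem $S \cong \prod M_{n_i}(D_i)$ for some finite-dimensional separable division $k$-algebras $D_i$. Each $D_i$ has a splitting field that is finite and separable over $k$; taking $L$ to be any finite separable extension containing compositely all these splitting fields gives $S \otimes_k L \cong \prod M_{m_j}(L)$ as $L$-algebras. I would then set $A' = A \otimes_k L$ and check the asserted properties. Local finiteness is immediate since $\dim_L A'_n = \dim_k A_n < \infty$. For GK-dimension, $\GKdim_L(A') = \GKdim_k(A)$ is a standard fact about finite base extension (see~\cite[Proposition 8.2.14]{KL}, which applies via local finiteness).

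The delicate assertion in part~(1) is $J(A') = J(A) \otimes_k L$. Recalling from the excerpt that $J(A) = J(A_0) \oplus A_{\geq 1}$, the analogous decomposition for $A'$ reduces this to showing $J(A'_0) = J(A_0) \otimes_k L$. The ideal $J(A_0) \otimes_k L$ is nilpotent (since $J(A_0)$ is), hence contained in $J(A'_0)$; conversely, the quotient $A'_0/(J(A_0) \otimes_k L) \cong S \otimes_k L$ is semisimple because $S$ is separable and $L/k$ is separable, forcing $J(A'_0) \subseteq J(A_0) \otimes_k L$. Combining these gives equality, and the induced isomorphism $A'/J(A') \cong S \otimes_k L$ follows.

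For part~(2), my plan is to lift a complete set of orthogonal primitive idempotents representing the isomorphism classes of simple $S \otimes_k L$-modules. Writing $S \otimes_k L \cong \prod_{j=1}^m M_{m_j}(L)$, pick a rank-one idempotent $\bar f_j$ in each matrix factor and set $\bar e = \sum_j \bar f_j$, a full idempotent in $A'/J(A')$. Since idempotents lift modulo the nil ideal $J(A'_0)$ (using~\cite[Corollary~21.32]{FC} cited in the excerpt), there exist orthogonal idempotents $e_1,\dots,e_m \in A'_0$ lifting the $\bar f_j$; set $e = \sum e_j$. Then $e$ is full in $A'$ because $\bar e$ generates $S \otimes_k L$ as a two-sided ideal, and therefore $A' e A' + J(A') = A'$, which forces $A' e A' = A'$ by the graded Nakayama lemma (\cite[Lemma~2.2]{RR1}) applied to $A'/A'eA'$. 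Hence $A'' = eA'e$ is graded Morita equivalent to $A'$ via the bimodules $A'e$ and $eA'$, and this equivalence preserves GK-dimension because $A'$ and $A''$ are mutually isomorphic to direct summands of finite sums of each other's bimodules, implying their growth functions are equivalent.

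Finally, to see $A''$ is elementary, I would compute $J(A'') = e J(A') e$ (a standard consequence of Morita theory via corner rings) so that $A''/J(A'') \cong e(A'/J(A'))e \cong \bar e(S \otimes_k L)\bar e \cong \prod_{j=1}^m \bar f_j M_{m_j}(L) \bar f_j \cong L^m$, as desired. Local finiteness of $A''$ in each degree is clear from $A''_n \subseteq A'_n$. I expect the main subtlety to be verifying that the corner ring $A''$ inherits exactly the asserted semisimple quotient and that Morita equivalence truly preserves GK-dimension in the graded locally finite setting; both should follow from tracking through the standard Morita machinery carefully, but they are the places where hypotheses on separability and local finiteness are being used most essentially.
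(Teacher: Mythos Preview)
Your proposal is correct and follows essentially the same approach as the paper's proof, which is quite terse: the paper cites Bourbaki for the existence of the splitting field $L$, cites \cite[Lemma~3.7]{RR1} for the identification $J(A') = J(A) \otimes_k L$, and cites \cite[Section~I.6]{ASS} for the existence of the full idempotent making $eA'e$ elementary. You have simply unpacked these references into explicit arguments (the nilpotent-plus-semisimple computation of $J(A'_0)$, the idempotent lifting and graded Nakayama argument for fullness, and the corner-ring computation of $J(A'')$), and for the GK-dimension claim the paper observes directly that the total Hilbert series of $A$ over $k$ and of $A'$ over $L$ coincide.
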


\begin{proof}
(1) Because $S$ is a separable $k$-algebra, there exists a finite separable extension $L/k$ such that $S \otimes_k L$ is
isomorphic to a product of matrix rings over $L$; see~\cite[\S13, Th\'{e}or\`{e}me~1]{BourbakiVIII}.
By~\cite[Lemma 3.7]{RR1}, the algebra $A' = A \otimes L$ has $J(A') = J(A) \otimes_k L$ and
$A'/J(A') = S \otimes_k L$.  It is clear that $\GKdim_L(A') = \GKdim_k(A)$;  in fact, the total Hilbert
series of $A$ over $k$ is the same as the total Hilbert series of $A'$ over $L$.

(2)  Since $S' = A'/J(A')$ is a product of matrix rings over its base field $L$, it is standard that there is a full idempotent $e \in A'_0$ such that $A'' = e A' e$ is an elementary $L$-algebra \cite[Section I.6]{ASS}.  The rings $A''$ and $A'$ are Morita
equivalent since $e$ is a full idempotent.  Morita equivalence preserves GK-dimension, so that $\GKdim_L(A') = \GKdim_L(A'')$.
\end{proof}

We give an example of how the result above can be used to reduce to the elementary case in the proofs of some important basic properties.  Let us recall some standard notions in the theory of GK-dimension.  Let $A$ be a locally finite graded $k$-algebra.  We say that GK-dimension is \emph{graded exact} over $A$ if, given a short exact sequence $0 \to M \to N \to P \to 0$ in the category of finitely generated graded left $A$-modules, then $\GKdim(N) = \max(\GKdim(M), \GKdim(P))$.   We say that GK-dimension is \emph{graded finitely partitive} over $A$ if, given a finitely generated graded $A$-module $M$, there is a number $n_0 = n_0(M)$ such that if there exists a descending chain $M = M_0 \supsetneq M_1 \supsetneq \cdots \supsetneq M_n$ of graded submodules $M_i$ with $\GKdim(M_i/M_{i+1}) = \GKdim(M)$ for all $i$, then $n \leq n_0$.
\begin{proposition}
\label{prop:exact}
Let $A$ be a locally finite graded homologically smooth $k$-algebra.
\begin{enumerate}
\item Either $\GK(A) < \infty$ or else $A$ has exponential growth.
\item If $A$ is noetherian, then $\GK(A) < \infty$ and GK-dimension over $A$ is graded exact and graded finitely partitive.
\end{enumerate}
\end{proposition}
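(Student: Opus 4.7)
The plan is to reduce to the elementary case via Theorem~\ref{thm:reduce-elem} and then extract the conclusions from the matrix-valued Hilbert series results of Propositions~\ref{prop:growth} and~\ref{prop:perfect-hs}. One first observes that homological smoothness, noetherianness, GK-dimension, the dichotomy between polynomial and exponential growth, and the module-theoretic properties of graded exactness and graded finite partitivity are all stable under finite separable base field extension and under graded Morita equivalence. Thus, replacing $A$ by the elementary algebra $A'' = eA'e$ furnished by Theorem~\ref{thm:reduce-elem}, we may assume $A$ is elementary.

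Part~(1) follows almost directly: Proposition~\ref{prop:perfect-hs}(1) gives $h_A(t) = q(t)^{-1}$ for some $q(t) \in M_n(\Z[t])$ with $(\det q)(0) = \pm 1$. Writing this as $q(t)^{-1} p(t)$ with $p(t) = I_n$ (which has no roots), Proposition~\ref{prop:growth}(3) applies to yield the dichotomy: either every root of $\det q(t)$ is a root of unity and $\GKdim(A) < \infty$, or $A$ has exponential growth.

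For part~(2), assume in addition that $A$ is noetherian. The crux is to show $\GKdim(A) < \infty$; by part~(1) it suffices to rule out exponential growth. Here I would adapt the Stephenson--Zhang argument \cite[Theorem~1.2]{SZ}: use a root of $\det q(t)$ lying off the unit circle to produce an infinite strictly ascending chain of right ideals of $A$, contradicting the noetherian hypothesis. This is the main obstacle of the proof, as it lies outside the scope of the Hilbert-series machinery already developed and requires genuinely new input.

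Once $\GKdim(A) < \infty$ is in hand, the remaining properties follow more routinely. By Lemma~\ref{lem:smooth separable}, each simple module $Se_i$ is perfect and hence has finite projective dimension; noetherianness then guarantees that minimal graded projective resolutions of finitely generated modules have finitely generated terms, and a standard Ext-vanishing argument bounds $\gldim A \leq \max_i \pd_A(Se_i) < \infty$, so every finitely generated graded $A$-module is perfect. Graded exactness is then a direct application of Proposition~\ref{prop:perfect-hs}(4). For graded finite partitivity, fix a finitely generated $M$ with $d = \GKdim(M)$ and any descending chain $M = M_0 \supsetneq M_1 \supsetneq \cdots \supsetneq M_n$ with $\GKdim(M_i/M_{i+1}) = d$ for each $i < n$. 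Graded exactness forces $\GKdim(M_i) = d$ for all $i < n$. By Proposition~\ref{prop:perfect-hs}(3), every nonzero perfect graded module of GK-dimension $d$ has multiplicity bounded below by the fixed positive constant $\eps_0 := 1/|\eps(\det q(t))|$, and iterating the multiplicity-additivity statement of Proposition~\ref{prop:perfect-hs}(4) down the chain yields $\eps(M) \geq \sum_{i=0}^{n-1}\eps(M_i/M_{i+1}) \geq n\,\eps_0$, giving $n \leq \eps(M)/\eps_0$, a bound independent of the chain.
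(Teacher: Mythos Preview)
Your proposal is correct and follows essentially the same route as the paper: reduce to the elementary case via Theorem~\ref{thm:reduce-elem}, invoke Propositions~\ref{prop:perfect-hs}(1) and~\ref{prop:growth}(3) for the dichotomy in~(1), cite Stephenson--Zhang to rule out exponential growth in the noetherian case, and then use Proposition~\ref{prop:perfect-hs}(3),(4) for exactness and partitivity once every finitely generated module is perfect. The only cosmetic difference is that the paper cites \cite[Theorem~1.2]{SZ} as a black box (it applies verbatim to any locally finite graded algebra of exponential growth, so no adaptation is actually needed), and the paper phrases the reduction contrapositively (``if $A$ fails exactness/partitivity then so does $A''$'') rather than asserting stability directly.
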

\begin{proof}
By Lemma~\ref{lem:smooth separable}, $S = A/J(A)$ is separable.  Thus we can use Theorem~\ref{thm:reduce-elem} to pass to an algebra $A''$ which is elementary.  By construction $A'' = e(A \otimes_k L)e$ for a full idempotent $e \in (A \otimes L)_0$, where $k \subseteq L$ is a finite extension of fields.   Note that if $A$ is noetherian, so is $A' = A \otimes_k L$, since it is a finitely generated $A$-module; then $A''$ is also noetherian since it is Morita equivalent to $A'$.  As noted in Theorem~\ref{thm:reduce-elem} already, $\GKdim_L(A'') = \GKdim_k (A)$.  In addition, since $A$ is homologically smooth, so is $A''$ \cite[Corollary 3.14, Proposition 3.17]{RR1}.

If $A$ is not graded finitely partitive, there is a graded $A$-module $M$ such that for each $n >0$ there is a descending chain
$M = M_0 \supsetneq M_1 \supsetneq \cdots \supsetneq M_n$ of graded submodules $M_i$ with $\GKdim(M_i/M_{i+1}) = \GKdim(M)$ for all $i$.  Since $- \otimes_k L$ and $e(-)$ are exact functors which preserve GK-dimension, we easily get
that $A''$ has a module $M'' = e(M \otimes_k L)$ with the same properties, so $A''$ is not graded finitely partitive.  A similar argument shows  that if $A$ is not graded exact, then $A''$ is not graded exact.

By the observations above, we see that it is enough to prove the proposition for the elementary algebra $A''$.
Changing notation back, we assume that $A$ is elementary for the rest of the proof.

(1)  By Proposition~\ref{prop:perfect-hs}(1) and Proposition~\ref{prop:growth}(3), either $\GKdim(A) < \infty$ or else $A$ has exponential growth.

(2) If $A$ has exponential growth, it is not noetherian by a result of Stephenson and Zhang \cite[Theorem 1.2]{SZ}.  Thus we
must have $\GKdim(A) < \infty$.  Since $A$ is noetherian and of finite global dimension, the perfect $A$-modules are the same as the finitely generated $A$-modules.  Also, each finitely generated $A$-module $M$ has $\GKdim(M) \leq \GKdim(A) < \infty$.  Thus Proposition~\ref{prop:perfect-hs}(4) shows that GK-dimension is exact on short exact sequences of finitely generated graded modules.   The graded finitely partitive property is an easy consequence of Proposition~\ref{prop:perfect-hs}(3) and (4).
\end{proof}

\section{The structure of graded invertible bimodules}
\label{sec:bimodule}

Let $A$ be a $k$-algebra.  Let $U$ be an $(A, A)$-bimodule.  When studying a bimodule over a $k$-algebra, we always assume the bimodule is $k$-central, that is $au = ua$ for all $u \in U$ and
$a \in k$.  Recall that an $(A, A)$-bimodule $U$ is \emph{invertible} if there is an $(A, A)$-bimodule $V$ with
\begin{equation}
\label{eq:inv}
U \otimes_A V \cong A \cong V \otimes_A U
\end{equation}
 as $(A, A)$-bimodules.  If $A$ is a graded $k$-algebra, a graded $(A, A)$-bimodule is \emph{graded invertible} if there is another graded $(A, A)$-bimodule $V$ such that \eqref{eq:inv} holds with graded isomorphisms.
 By~\cite[Lemma~2.10]{RR1}, a graded $(A,A)$-bimodule is graded invertible if and only if it is
invertible when considered as an ungraded bimodule.

Given a $k$-algebra endomorphism $\mu: A \to A$, we can define an $(A, A)$-bimodule $^1 A^{\mu}$, which is the same as $A$ as a left module, but with right $A$-action $a * x = a\mu(x)$.
Given another endomorphism $\tau: A \to A$, it is easy to check that $^1 A^{\mu} \otimes_A {} ^1 A^{\tau} \cong {} ^1 A^{\mu \tau}$ as $(A, A)$-bimodules.   In particular, if $\mu$ is an automorphism then $^1 A^{\mu}$ is invertible, with inverse $^1 A^{\mu^{-1}}$.  Similarly, if $A$ is graded and $\mu$ is a graded automorphism, then $^1 A^{\mu}$ is graded invertible.

For a connected graded $k$-algebra $A$, generated by degree one elements, it is well known that every graded invertible bimodule is graded isomorphic to $^1 A^{\mu}(\ell)$ for some graded automorphism $\mu$ and some shift $\ell$.  On the other hand, graded invertible bimodules over a general locally finite graded $k$-algebra, even an elementary algebra, can have a surprisingly complicated structure.   The details are given in the next result, which will be important for
our applications to Calabi-Yau algebras later.

\begin{proposition}
\label{prop:graded bimodule}
Let $A$ be a locally finite graded elementary $k$-algebra with graded Jacobson radical $J$ and semisimple part $S = A/J$.
Choose a fixed decomposition $1 = e_1 + \cdots + e_n$ of $1$ as a sum of primitive orthogonal idempotents $e_i \in A_0$.
Let $U$ be a graded invertible bimodule over $A$.
\begin{enumerate}
\item There exists a (not necessarily graded) automorphism $\mu: A \to A$ such that $U \cong {}^1 A^{\mu}$ as ungraded bimodules, and such that there is permutation of $\{1, \dots, n \}$, which we also write as $\mu$, with $\mu(e_i) = e_{\mu(i)}$ for all $i$.   The permutation $\mu$ is uniquely determined by $U$.

\item There is a unique sequence of integers $\underline{\ell} = (\ell_1, \dots, \ell_n)$ such that $U \cong \sum_{i=1}^n Ae_i(\ell_i)$ as graded left modules and $U \cong \sum_{i=1}^n e_iA(\ell_{\mu(i)})$ as graded right modules.

\item The restriction of $\mu$ to $e_i Ae_j$ yields an isomorphism $e_i A e_j \to e_{\mu(i)} A e_{\mu(j)}$ of graded vector spaces which is homogeneous of degree $\ell_{\mu(j)} - \ell_{\mu(i)}$.

\item $U \otimes_A Se_j \cong Se_{\mu(j)}(\ell_{\mu(j)})$, as graded left $A$-modules, for all $j$.

\item Suppose that $A_0 = S$ and that $A$ is generated as a $k$-algebra by elements of degree $0$ and $1$ (equivalently,
the underlying weighted quiver of $A$ has all arrows of weight $1$).   Suppose in addition that $A$ is indecomposable
as an algebra.  Then all of the $\ell_i$ are equal to a single $\ell$, the automorphism $\mu: A \to A$ is a graded automorphism; and $U \cong {}^1 A^{\mu}(\ell)$.
\end{enumerate}
\end{proposition}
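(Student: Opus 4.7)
The plan is to exploit the fact that $U \otimes_A -$ is a graded autoequivalence of $A\lmod$ that must respect the graded $\Ext$ groups between the simple modules, which by Lemma~\ref{lem:quiver facts}(2) encode the arrows of~$Q$.

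Since $U$ is graded invertible, the functor $F = U \otimes_A -$ is a graded autoequivalence of the category of graded left $A$-modules, and part~(4) shows that $F(Se_j) \cong Se_{\mu(j)}(\ell_{\mu(j)})$. Because graded autoequivalences preserve graded $\Ext$-groups (with shifts tracked in the usual way), for all vertices $i,j$ and every $n \in \mb{Z}$ we obtain an isomorphism
\[
\Ext^1_A(Se_i, Se_j)_n \cong \Ext^1_A\bigl(Se_{\mu(i)}(\ell_{\mu(i)}), Se_{\mu(j)}(\ell_{\mu(j)})\bigr)_n = \Ext^1_A(Se_{\mu(i)}, Se_{\mu(j)})_{n + \ell_{\mu(j)} - \ell_{\mu(i)}}.
\]
Under the hypothesis that all arrows of $Q$ have weight~$1$, Lemma~\ref{lem:quiver facts}(2) says that each $\Ext^1_A(Se_i, Se_j)$ is concentrated in graded degree~$-1$, where its dimension counts the arrows from $j$ to $i$. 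If $Q$ contains such an arrow, the left-hand side is nonzero at $n = -1$; the displayed isomorphism then forces the right-hand side to be nonzero in degree $-1 + \ell_{\mu(j)} - \ell_{\mu(i)}$, and since that $\Ext$-group is itself concentrated in degree~$-1$, we conclude $\ell_{\mu(i)} = \ell_{\mu(j)}$.

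Because $A$ is indecomposable, Lemma~\ref{lem:quiver facts}(3) gives that the underlying quiver $Q$ is connected as an undirected graph; iterating the preceding identity along adjacencies of $Q$ yields $\ell_{\mu(i)} = \ell_{\mu(j)}$ for all $i,j$, and since $\mu$ is a permutation, the $\ell_i$ are all equal to a common value~$\ell$. With this in hand, part~(3) of the proposition says that the restriction of $\mu$ to each $e_iAe_j$ is homogeneous of degree $\ell_{\mu(j)} - \ell_{\mu(i)} = 0$, so $\mu$ itself is a graded automorphism. Finally, part~(2) gives $U \cong \bigoplus_i Ae_i(\ell) = A(\ell)$ as graded left modules, and combining this with the ungraded bimodule isomorphism $U \cong {}^1A^\mu$ from part~(1) upgrades to a graded bimodule isomorphism $U \cong {}^1A^\mu(\ell)$.

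The main technical point I expect to be the obstacle is establishing the graded $\Ext$ comparison while keeping careful track of shifts; this is routine once one knows that $F$ is a graded autoequivalence, which follows from the graded invertibility of~$U$. Granted that bookkeeping, the essential content is a rigidity statement: in a connected quiver with all arrows of weight one, any graded autoequivalence permuting simples can only shift them all by a single common integer.
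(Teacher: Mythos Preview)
Your argument is correct and takes a genuinely different route from the paper's proof of part~(5). The paper argues via the graded ring isomorphism $A\op \cong \End_A(U)$: writing $U \cong \bigoplus_i Ae_i(\ell_i)$ gives $\End_A(U)\op \cong \bigoplus_{i,j} e_iAe_j(\ell_j-\ell_i)$, and since this algebra must have the same underlying weighted quiver as $A$, each nonzero piece $e_iA_1e_j(\ell_j-\ell_i)$ must sit in degree~$1$, forcing $\ell_i=\ell_j$ whenever $Q$ has an arrow $i\to j$. You instead use that $U\otimes_A-$ is a graded autoequivalence which, by part~(4), sends $Se_j\mapsto Se_{\mu(j)}(\ell_{\mu(j)})$, and then compare graded $\Ext^1$ between simples using Lemma~\ref{lem:quiver facts}(2). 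Both arguments reduce to the same rigidity: an arrow $j\to i$ forces $\ell_{\mu(i)}=\ell_{\mu(j)}$, and connectedness of $Q$ finishes. Your approach is a bit more conceptual and avoids the explicit $\End$ computation, at the cost of relying on part~(4) (which in turn invokes the bimodule identity $U\otimes_A S\cong S\otimes_A U$ from \cite[Lemma~2.11]{RR1}); the paper's approach is more self-contained within the proposition. The final ``upgrade'' to $U\cong{}^1A^\mu(\ell)$ is slightly quick in both versions: what makes it work is that the specific left-module isomorphism $\psi\colon U\to A$ constructed in the proof of~(1) is already homogeneous of degree $\ell_{\mu(j)}$ on each $Ue_j$, so once all $\ell_{\mu(j)}=\ell$ it becomes a graded bimodule isomorphism $U\to{}^1A^\mu(\ell)$.
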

\begin{proof}
(1) Since $U$ is graded invertible, $U$ is finitely generated projective as a left $A$-module.  By the remarks at the
beginning of Section~\ref{sec:elemquiv}, we have $U \cong \bigoplus_{i = 1}^m A e_{d_i}(\ell_i)$, as graded left modules, for some uniquely determined $1 \leq d_i \leq n$ (possibly with repeats) and shifts $\ell_i \in \mb{Z}$.   In addition, $U$ is invertible in the ungraded sense and so must be a generator of the category $A \lMod$ of left $A$-modules.  Ignoring the grading for the moment, then $A e_i$ is a surjective image of $U^{\oplus r}$ for some $r$; say $\phi \colon U^{\oplus r} \to Ae_i$ is a surjection, where we can think of $\phi$ as an element of  $\bigoplus_{j = 1}^r \bigoplus_{k = 1}^m \Hom_A(Ae_{d_k}, Ae_i)$.   For $h \neq i$, since
$Ae_h$ and $Ae_i$ are not isomorphic by the assumption that $A$ is elementary, then there can
be no surjective homomorphism $Ae_h \to Ae_i$, and thus any element in $\Hom_A(Ae_h, Ae_i)$ has image in the unique maximal left submodule $Je_i$ of $Ae_i$.    Since $\phi$ is surjective, it follows that $i = d_k$ for some $k$.  In other words,
every indecomposable projective occurs at least once as summand of $U$, so the $d_i$ include every integer from $1$ to $n$,
and thus $m \geq n$.  On the other hand, since $U$ is graded invertible, and hence $U \otimes_A -$ is a graded Morita equivalence, there is a graded isomorphism of rings $A^{\op} \cong \End_A(U)$.  If $f_i \in \End_A(U)$ is projection onto the $i$th summand of $U$ followed by inclusion,
then $1 = f_1 + \cdots + f_m$ and the $f_i$ are orthogonal idempotents in $\End_A(U)$.   Since $A^{\op} \cong \End_A(U)$, there must be at most $n$ idempotents in any decomposition of $1$ into a sum of orthogonal idempotents, and thus $m \leq n$.  This forces $m = n$.  Thus the $d_i$ include each integer from $1$ to $n$ exactly once, so that we can relabel now to assume that $U \cong \bigoplus_{i = 1}^n A e_i(\ell_i)$.

Now $U = \bigoplus_{i=1}^n U e_i$ as graded left modules.  Since we just saw that $U$ is a direct sum of $n$ indecomposable
graded projective left $A$-modules, each $Ue_i$ must be indecomposable.  As we remarked at the beginning of Section~\ref{sec:elemquiv}, the modules $Ae_i$ are the only graded indecomposable projectives, and so
there must be a permutation $\mu$ of $\{1, \dots, n \}$
and isomorphisms $\psi_j: U e_j \cong  A e_{\mu(j)} (\ell_{\mu(j)} )$, as graded left modules, for all $j$.
Let $x_j \in Ue_j$
be the homogeneous generator of this module such that $\psi_j(x_j) = e_{\mu(j)} \in A e_{\mu(j)} (\ell_{\mu(j)})$.
Note that $\deg(x_j) = -\ell_{\mu(j)}$.

Now we fix an (ungraded) isomorphism $\psi: U \to A$ of left modules which is given by following the graded isomorphism
$\bigoplus \psi_j: \bigoplus_{j=1}^n Ue_j \to \bigoplus_{j=1}^n A e_{\mu(j)} (\ell_{\mu(j)})$ with the natural identification
$\bigoplus_{j=1}^n A e_{\mu(j)} (\ell_{\mu(j)}) \to \bigoplus_{j=1}^n A e_{\mu(j)} $, which is a map of degree
$\ell_{\mu(j)}$ in the $j$th component.

To produce the automorphism $\mu$, we use a similar idea as in \cite[Lemma 2.9]{MM}.   Because $\psi: U \to A$ is an isomorphism of left modules,
there is an $(A, A)$-bimodule isomorphism $U \cong {}^1 A^{\mu}$ where $\mu: A \to A$ is the endomorphism defined by $\mu(a) = \psi(\psi^{-1}(1) * a)$, with $*$ indicating the right $A$-module action on $U$.  Note that since $\psi$ is not necessarily graded, $\mu$ is not necessarily a graded endomorphism.

Let $V$ be the inverse of $U$, so that $U \otimes_A V \cong A \cong V \otimes_A U$ as graded $(A, A)$-bimodules.  Then $V$ is also graded invertible and so the same arguments as above give $V \cong {}^1 A^{\tau}$ for some endomorphism $\tau: A \to A$.
Thus $A \cong {}^1 A^{\mu} \otimes_A {}^1 A^{\tau} \cong {}^1 A^{\mu\tau}$ as ungraded bimodules.  Consider some bimodule
isomorphism $\theta: A \to {}^1 A^{\mu \tau}$, and let $a = \theta(1)$.  Then $a$ generates ${}^1 A^{\mu \tau}$ as a left module
and so there is $b \in A$ such that $ba = 1$.  Similarly $a$ is a right generator and so there is $c \in A$ such that $a \mu(\tau(c)) = 1$.
Thus $a$ has a right and left inverse and so is a unit in $A$.  Now for all $b \in A$ we have $b = b(1) = (1)b$ and so applying $\theta$ we get $ba = a\mu(\tau(b))$.  This shows that $\mu\tau(b) = a^{-1} b a$ and we conclude that $\mu \tau$ is an inner automorphism of $A$.
A symmetric argument using that $A \cong V \otimes_A U$ shows that $\tau \mu$ is an inner automorphism.  In particular,
$\mu$ is an automorphism of $A$.

By the construction of $\psi$, we have $\psi^{-1}(1) = x_1 + \cdots + x_n$ with each $x_i$ a generator of $Ue_i$ as defined above.  Then $\mu(e_j) = \psi((\sum x_i) * e_j) = \psi(x_j) = e_{\mu(j)}$ by definition.   The uniqueness of the permutation
$\mu$ is clear from the isomorphisms  $U e_j \cong  A e_{\mu(j)} (\ell_{\mu(j)})$.

(2) We have already found integers $\ell_i$ such that $U \cong \bigoplus_{i=1}^n Ae_i(\ell_i)$ as graded left $A$-modules, and the uniqueness of the integers $\ell_i$ follows from the remarks at the beginning of Section~\ref{sec:elemquiv}.
An analogous argument on the other side gives $U \cong \bigoplus_{i=1}^n e_iA(\ell'_i)$ for some uniquely determined integers $\ell'_i$, as graded right $A$-modules.

Since we work with left modules by default in this paper, we use the notation $\Hom_{A^{\op}}(M, N)$ to indicate a Hom group between right modules $M_A$ and $N_A$.
Since $U$ is graded invertible, the natural
map $A \to \Hom_{A^{\op}}(U, U) = \End_{A^{\op}}(U)$ is a graded isomorphism (of rings, and of $(A, A)$-bimodules).  Thus we have
\begin{align*}
A \cong  \End_{A^{\op}}(U) &= \Hom_{A^{\op}}\left( \bigoplus_{j=1}^n e_jA(\ell'_j), U \right) \\
&= \bigoplus_{j=1}^n \Hom_{A^{\op}}(e_jA(\ell'_j), U) \cong \bigoplus_{j=1}^n Ue_j(-\ell'_j),
\end{align*}
as graded left $A$-modules.  In part (1) we already saw that $U e_j \cong  A e_{\mu(j)} (\ell_{\mu(j)} )$, so we conclude
that $A \cong \bigoplus_{j=1}^n A e_{\mu(j)} (\ell_{\mu(j)} - \ell'_j)$ as graded left $A$-modules.  This forces
$\ell'_j = \ell_{\mu(j)}$ as claimed.

(3)  Suppose that $a \in e_i A e_j$ is homogeneous with $\deg(a) = m$.  Since $a \in e_i A$,
$\mu(a) = \psi( (\sum_h x_h) * a) = \psi(x_i * a)$.  Now since $\deg(x_i) = -\ell_{\mu(i)}$, as noted in part (1),
$\deg(x_i * a) = m - \ell_{\mu(i)}$.  But since $a \in A e_j$, we also have $x_i * a \in U e_j$.  Since $\psi: U \to A$ is homogeneous
of degree $\ell_{\mu(j)}$ when restricted to $Ue_j$, $\deg(\psi(x_i *a)) = m + \ell_{\mu(j)} - \ell_{\mu(i)}$.  The result follows.

(4) We have $U \otimes_A S \cong S \otimes_A U$, as graded $(A, A)$-bimodules, by \cite[Lemma 2.11]{RR1}.
Now $U \otimes_A Se_j \cong U \otimes_A S \otimes_A Ae_j \cong S \otimes_A U \otimes_A Ae_j = S \otimes_A Ue_j$.  But
we have seen that $Ue_j \cong A e_{\mu(j)} (\ell_{\mu(j)} )$, and so
$S \otimes_A Ue_j \cong Se_{\mu(j)} (\ell_{\mu(j)} )$.

(5) Since $U \cong \sum_{i=1}^n Ae_i(\ell_i)$ as graded left $A$-modules, we have
\[
\End_A(U) = \Hom_A\left( \bigoplus_{i = 1}^m A e_i(\ell_i), \bigoplus_{j = 1}^m A e_j(\ell_j) \right)  
\cong \left(\bigoplus_{i,j} e_i A e_j  (\ell_j -\ell_i)\right)^{\op}
\]
 as graded algebras; we denote the algebra above as $B = \bigoplus_{i,j} e_i A e_j  (\ell_j -\ell_i)$.
As already noted, since $U$ is graded invertible the natural map $A^{\op} \to \End_A(U)$ is a graded isomorphism; thus $B \cong A$ as graded rings.
By Hypothesis, $J = J(A) = A_{\geq 1}$, so $A = A_0 \oplus J$ and $J = A_1 \oplus A_{\geq 2} = J^2$ as vector spaces; in other words, $A_0$ is
a vertex space for $A$ and $A_1$ is an arrow space, so the associated quiver $Q$ has all arrows of weight $1$.
Now the ring $B$ is clearly essentially the same as the ring $A$, with some elements possibly having shifted degrees.
Clearly $J(B) = \bigoplus_{i,j} e_i J e_j  (\ell_j -\ell_i)$ and it follows
that $X = \bigoplus_{i,j} e_i A_0 e_j  (\ell_j -\ell_i) = B_0$ is a vertex
space for $B$ and $V = \bigoplus_{i,j} e_i A_1 e_j  (\ell_j -\ell_i)$ is an arrow space for $B$.  Since $A$ and $B$ are graded isomorphic, they
have the same associated quiver, so we see that $e_i A_1 e_j(\ell_j -\ell_i)$ must be of degree $1$ as long as it is nonzero.
Thus if $Q$ is the associated quiver of $A$, then whenever there is an arrow $i \to j$ in $Q$ we must have $\ell_i = \ell_j$.  Since $A$ is indecomposable as an algebra, $Q$ is connected by Lemma~\ref{lem:quiver facts}.   It follows
that the numbers $\ell_i$ are all equal to a single number $\ell$.  By part (3), this implies that $\mu$ is a graded automorphism.  Since we now have $U \cong A(\ell)$ as graded left (or right) $A$-modules, we must also have $U \cong {}^1 A^{\mu}(\ell)$ as graded $(A, A)$-bimodules.
\end{proof}

It is easy to find examples where $A$ is decomposable as an algebra and a graded invertible bimodule over $A$ has shifts $\ell_i$ as in part (2) of the proposition which are not all equal.  For a very simple example, let $A = k \times k$ with $1 = e_1 + e_2$ and let $U = ke_1(\ell_1) \oplus ke_2(\ell_2)$ for any integers $\ell_1, \ell_2$.  Clearly $U$ is graded invertible with inverse $V = ke_1(-\ell_1) \oplus ke_2(-\ell_2)$.  In this example $\mu$ is trivial.

More interestingly, if the quiver associated to a locally finite elementary algebra $A$ does not have arrows of weight $1$, then even if $A$ is indecomposable, the shifts $\ell_i$ associated to a graded bimodule $U$ may be different and $\mu$ may not be a graded automorphism.
We will work out a detailed example in the next section (Example~\ref{ex:switch}).

\section{Growth of graded twisted Calabi-Yau algebras}
\label{sec:CY}

For the rest of the paper, we will specialize to graded twisted Calabi-Yau algebras that are locally finite and elementary.
So we first review the definition and some facts about such algebras.

\begin{definition}\label{def:graded twisted CY}
Let $A$ be an $\N$-graded $k$-algebra. We say that $A$ is \emph{graded twisted Calabi-Yau} of dimension $d$ if
\begin{enumerate}[label=(\roman*)]
\item $A$ is graded homologically smooth; and
\item there is a graded invertible $(A,A)$-bimodule $U$ such that
\[
\Ext^i_{A^e}(A,A^e) \cong
\begin{cases}
0, & i \neq d \\
U, & i = d
\end{cases}
\]
as graded right $A^e$-modules.
\end{enumerate}
This $U$ is called the \emph{Nakayama bimodule} of $A$. We say that $A$ is \emph{Calabi-Yau} if $U \cong A$.
\end{definition}

For a not necessarily graded algebra $A$, the twisted Calabi-Yau property is defined similarly, with all instances of the
word ``graded'' removed from the definition.  In \cite[Theorem 4.2]{RR1}, we show that an algebra which is $\mb{N}$-graded and twisted Calabi-Yau is automatically graded twisted Calabi-Yau in the sense of the above definition.  Thus in this paper, in which we are primarily interested in graded algebras, we will just refer to them as twisted Calabi-Yau rather than graded twisted Calabi-Yau.

Suppose that $A$ is a locally finite  graded twisted Calabi-Yau $k$-algebra, with graded Jacobson radical $J = J(A)$ and
semisimple factor $S = A/J = A_0/J(A_0)$. We know from Lemma~\ref{lem:smooth separable} that $S$ must
be separable. This allows us to reduce to the elementary case in the study of graded twisted Calabi-Yau algebras.

\begin{lemma}
\label{lem:cy-reduce-elem}
If $A$ is a locally finite twisted Calabi-Yau $k$-algebra, then there is a finite separable extension $k \subseteq L$ and a
full idempotent $e \in A' = A \otimes_k L$ such that $A'' = e A' e$ is an elementary graded twisted Calabi-Yau $L$-algebra, with
$\GKdim_k A = \GKdim_L A''$.
\end{lemma}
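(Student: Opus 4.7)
The plan is to combine Theorem~\ref{thm:reduce-elem}, which already handles the reduction to the elementary case for locally finite graded algebras with separable semisimple factor, with the stability of the twisted Calabi-Yau property under base field extension and Morita equivalence (results established in the companion paper \cite{RR1}).

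First, since $A$ is twisted Calabi-Yau it is in particular graded homologically smooth, so by Lemma~\ref{lem:smooth separable} the semisimple factor $S = A/J(A)$ is separable over $k$. This puts us in the hypothesis of Theorem~\ref{thm:reduce-elem}. Applying part~(1) of that theorem, we obtain a finite separable field extension $k \subseteq L$ such that $S \otimes_k L$ is a product of matrix rings over $L$, and $A' = A \otimes_k L$ is locally finite with $\GKdim_L(A') = \GKdim_k(A)$. Applying part~(2), we obtain a full idempotent $e \in A'_0$ such that $A'' = eA'e$ is an elementary locally finite graded $L$-algebra that is graded Morita equivalent to $A'$, and with $\GKdim_L(A'') = \GKdim_L(A')$. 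Together, these equalities give $\GKdim_k(A) = \GKdim_L(A'')$.

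It then remains to verify that $A''$ is twisted Calabi-Yau. This requires two stability facts. First, the twisted Calabi-Yau property is preserved under the base field extension $A \mapsto A \otimes_k L$ when $L/k$ is finite and separable; this is established in \cite{RR1} as part of the general package of stability results (and it is what makes the separability of $L/k$ crucial, since separability guarantees that $L \otimes_k L$ remains semisimple and the analogue of graded homological smoothness is preserved). Second, the twisted Calabi-Yau property is preserved under graded Morita equivalence, and in particular under the operation $B \mapsto eBe$ when $e$ is a full idempotent; again this is a standard stability result in \cite{RR1}. Applying these two facts in succession to $A \mapsto A' \mapsto A''$ yields that $A''$ is twisted Calabi-Yau over $L$, completing the proof.

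The main obstacle is not conceptual but bookkeeping: each cited stability statement in \cite{RR1} must be applied with the correct hypotheses (in particular, the separability of $L/k$ is essential for the base change step to preserve homological smoothness, and fullness of $e$ is essential for Morita equivalence). Since Theorem~\ref{thm:reduce-elem} already packages the growth/GK-dimension bookkeeping, and \cite{RR1} packages the twisted Calabi-Yau bookkeeping, the present lemma amounts to splicing these two results together and verifying their hypotheses chain correctly.
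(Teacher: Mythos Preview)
Your proposal is correct and follows essentially the same approach as the paper: invoke Lemma~\ref{lem:smooth separable} to get separability of $S$, apply Theorem~\ref{thm:reduce-elem} for the GK-dimension bookkeeping, and then cite the stability of the twisted Calabi-Yau property under base field extension and Morita equivalence from \cite{RR1}. One small point: the paper cites \cite[Proposition~4.7]{RR1} for stability under arbitrary base field extension, so the separability of $L/k$ is not actually what makes the base-change step go through for the twisted Calabi-Yau property; rather, separability is needed earlier (in Theorem~\ref{thm:reduce-elem}) to split $S \otimes_k L$ into matrix rings over $L$.
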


\begin{proof}
By Lemma~\ref{lem:smooth separable}, $S = A/J$ is separable over $k$.  Thus Theorem~\ref{thm:reduce-elem} applies.  The algebra
$A' = A \otimes_k L$ remains twisted Calabi-Yau over $L$ since this property is stable under base field extension \cite[Proposition 4.7]{RR1}. Similarly, the twisted Calabi-Yau property is stable under Morita equivalence  \cite[Proposition 4.11]{RR1}, so
$A''$ is still twisted Calabi-Yau as well.
\end{proof}
\noindent Since our main subject in this paper is the GK-dimension of locally finite twisted Calabi-Yau algebras, we will
focus on the elementary case from now on, as justified by the previous result.

In the companion paper \cite{RR1} we also show that for locally finite graded $k$-algebras, the twisted Calabi-Yau condition
can be characterized in various ways via homological conditions on one-sided modules rather than bimodules.  In the elementary case, the statement along these lines that we will use in this paper is the following.
\begin{theorem}
\label{thm:cy-vers-asreg}
Suppose that $A$ is a locally finite graded elementary $k$-algebra, where $1 = e_1 + e_2 + \cdots + e_n$ is an expression of $1$ as a sum of primitive orthogonal idempotents.  Let $S = A/J$.
Then the following are equivalent:
\begin{enumerate}
\item $A$ is twisted Calabi-Yau of dimension $d$.
\item $A$ is \emph{generalized AS regular} in the following sense:  The ring $A$ has graded global dimension $d$, and there is a
permutation $\pi$ of $\{1, \dots, n \}$ and integers $\ell_i$  such that for all $i$ there are isomorphisms of right $A$-modules
\[
\Ext^i_A(S e_i, A) \cong \begin{cases} e_{\pi(i)}S (\ell_i)  & i = d \\ 0 & i \neq d \end{cases}.
\]
\item $A$ has graded global dimension~$d$, and there is an invertible $(S,S)$-bimodule $V$ such that there are isomorphisms
of $(S,S)$-bimodules
\[
\Ext^i_A(S, A) \cong \begin{cases} V & i = d \\ 0 & i \neq d \end{cases}.
\]
\end{enumerate}
\end{theorem}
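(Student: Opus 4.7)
The theorem is essentially a restatement of the equivalences established in \cite[Theorems 5.2, 5.15]{RR1} specialized to the elementary case. My plan is to prove the cycle (1) $\Rightarrow$ (2) $\Rightarrow$ (3) $\Rightarrow$ (1), making essential use of the bimodule structure theorem from Proposition~\ref{prop:graded bimodule} and the characterization of homological smoothness in Lemma~\ref{lem:smooth separable}.

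For (1) $\Rightarrow$ (2), assume $A$ is twisted Calabi-Yau of dimension $d$ with Nakayama bimodule $U$. Since $A$ is homologically smooth, Lemma~\ref{lem:smooth separable} gives that each simple module $Se_j$ is perfect, hence $\gldim A \leq d$. Applying Proposition~\ref{prop:graded bimodule} to $U$ produces a permutation $\mu$ of $\{1,\dots,n\}$ and a unique vector of shifts $(\ell_1,\dots,\ell_n)$ with $U \otimes_A Se_j \cong Se_{\mu(j)}(\ell_{\mu(j)})$. To pass from bimodule Ext to one-sided Ext, I would use the Van den Bergh-style identification $\RHom_A(M, A) \simeq \RHom_{A^e}(A, A^e) \otimes^L_A M$ that holds for any finitely presented $M$ over a homologically smooth algebra. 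Taking $M = Se_j$ and applying the twisted CY hypothesis, the right-hand side collapses to a shift of $U \otimes^L_A Se_j$, which by the preceding identification is $Se_{\mu(j)}(\ell_{\mu(j)})[-d]$. Reading this as a right module gives $\Ext^i_A(Se_j, A) = 0$ for $i \neq d$ and $\Ext^d_A(Se_j, A) \cong e_{\mu(j)} S(\ell_{\mu(j)})$, with $\pi := \mu$.

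The implication (2) $\Rightarrow$ (3) is essentially bookkeeping: summing over $j$ yields $\Ext^i_A(S, A) = 0$ for $i \neq d$ and $V := \Ext^d_A(S, A) \cong \bigoplus_j e_{\pi(j)} S(\ell_j)$ as an $(S,S)$-bimodule. Because $\pi$ permutes the primitive idempotents, $V$ is immediately seen to be invertible, with inverse $\bigoplus_j Se_{\pi(j)}(-\ell_j)$ built using the evident pairing of matrix units.

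The main obstacle is (3) $\Rightarrow$ (1), where we must reconstruct an invertible bimodule from the one-sided data. The hypothesis on $\Ext^*_A(S, A)$, combined with local finiteness, shows via a minimal projective resolution argument that $S$ is perfect, so Lemma~\ref{lem:smooth separable} gives homological smoothness. Thus $A$ admits a finite graded resolution $P_\bullet \to A$ by finitely generated projective $A^e$-modules, and $\Ext^i_{A^e}(A, A^e)$ can be computed from the complex $\Hom_{A^e}(P_\bullet, A^e)$. The hard step is to show this complex has cohomology concentrated in degree $d$ and that $U := \Ext^d_{A^e}(A, A^e)$ is graded invertible. The strategy is to apply $- \otimes^L_A S$ to the bimodule computation and match the result against the given $V$: the same Van den Bergh identification used above becomes, after tensoring, $\RHom_{A^e}(A, A^e) \otimes^L_A S \simeq \RHom_A(S, A)$, which by hypothesis is $V[-d]$. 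A graded Nakayama-style argument (using $J \otimes A + A \otimes J$ as the bimodule radical) then propagates vanishing of lower Ext groups from the ``mod radical'' level back to the bimodule. Finally, the resulting $U$ has $U \otimes_A S \cong V$ as left $S$-modules, which forces $U$ to decompose as $\bigoplus_i Ae_i(\ell_i)$ just as in Proposition~\ref{prop:graded bimodule}; this together with the symmetric right-module computation promotes $U$ to a graded invertible bimodule, completing the proof.
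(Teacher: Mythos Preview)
The paper's own proof is a one-line citation: since $A$ is elementary, $S$ is separable, so the equivalence is imported wholesale from \cite[Theorem~5.2, Theorem~5.15]{RR1}. Your proposal correctly identifies this and then goes further, sketching what the cited argument should look like. The directions $(1)\Rightarrow(2)\Rightarrow(3)$ are handled well; the Van den Bergh identification and Proposition~\ref{prop:graded bimodule} do exactly the work you describe.

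The one place that needs more care is the end of $(3)\Rightarrow(1)$. Your spectral-sequence/Nakayama argument correctly gives $\Ext^i_{A^e}(A,A^e)=0$ for $i\neq d$ and shows that $U=\Ext^d_{A^e}(A,A^e)$ is projective as a \emph{right} $A$-module (via $U\otimes^L_A S\simeq V$). But hypothesis~(3) is one-sided: it controls $\Ext^*_A(S,A)$, not $\Ext^*_{A^{\op}}(S,A)$, so the ``symmetric right-module computation'' you invoke to get \emph{left}-projectivity of $U$ is not immediately available and must be earned. The missing step is this: applying $\Hom_A(-,A)$ to the minimal left projective resolution of $S$ produces, under~(3), an exact complex of finitely generated \emph{right} projectives resolving $V$. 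Because $V$ is an \emph{invertible} $(S,S)$-bimodule, every right simple $e_jS$ occurs as a summand of $V$, so this single complex furnishes minimal right projective resolutions of all $e_jS$. Dualizing once more recovers the original complex and yields $\Ext^i_{A^{\op}}(V,A)\cong S$ concentrated in degree $d$; this is the right-sided analogue of~(3). Only now does the opposite Van den Bergh identification $S\otimes^L_A U\simeq \RHom_{A^{\op}}(S,A)[d]$ give $\Tor^A_j(S,U)=0$ for $j>0$, hence left-projectivity of $U$, and invertibility follows. This double-dual maneuver is precisely the substance of the cited results in \cite{RR1}; without it, knowing $U$ decomposes nicely on each side separately does not by itself force $U$ to be invertible as a bimodule.
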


\begin{proof}
Because $A$ is elementary, $S$ is automatically separable over $k$.  Thus this follows from~\cite[Theorem 5.15]{RR1} and the characterizations of generalized AS~regular algebras given in \cite[Theorem 5.2(a,d$'$)]{RR1}.
\end{proof}

Let $A$ be a locally finite elementary twisted Calabi-Yau algebra, and write $1 = e_1 + \cdots + e_n$
where the $e_i$ are primitive and orthogonal.  The Nakayama bimodule $U$ of $A$ is
graded invertible, and hence its structure is given by Proposition~\ref{prop:graded bimodule}.
Thus $U \cong {}^1 A^{\mu}$, as ungraded bimodules, for some not necessarily graded automorphism
$\mu$ of $A$ we call a \emph{Nakayama automorphism} of $A$ (the automorphism $\mu$ is determined only up to composition with an inner automorphism of $A$).  With the choice of $\mu$ found in Proposition~\ref{prop:graded bimodule},
there is an associated action $\mu$ on the vertex labels $\{1, \dots, n \}$ such that $\mu(e_i) = e_{\mu(i)}$; in particular, $\mu$ restricts to an automorphism of the associated vertex space $S = \bigoplus ke_i$.  The vector $\underline{\ell}$ such that $U \cong \sum_{i=1}^n Ae_i(\ell_i)$ as graded left modules is called the (left) \emph{AS-index} or \emph{Gorenstein parameter} of $A$.  Example~\ref{ex:switch} below will illustrate a case where a vector of distinct integers is indeed required for the AS-index and where the Nakayama automorphism is not graded.

We note that a non-elementary finitely graded twisted Calabi-Yau algebra $A$ need not have a Nakayama automorphism at all;
that is, the Nakayama bimodule $U$ need not be of the form $^1 A^{\mu}$.  See \cite[Example 7.2]{RR1} for an explicit example.

Note that any locally finite graded algebra $A$ can be written as
\[
A \cong A_1 \times \cdots \times A_m
\]
where each $A_i$ is a graded indecomposable algebra (corresponding to a block decomposition~\cite[Section~22]{FC}
of $A_0$), and it was shown in~\cite[Proposition~4.6]{RR1} that $A$ is twisted Calabi-Yau of dimension~$d$ if and only if
each $A_i$ is twisted Calabi-Yau of dimension~$d$. In this way, the study of locally finite twisted Calabi-Yau algebras can
be reduced to the study of \emph{indecomposable} such algebras. In the case where $A$ and the $A_i$ are elementary,
Lemma~\ref{lem:quiver facts}(3) shows that we may restrict our attention to homomorphic images of path algebras of
weighted quivers that are connected.

We now prove our main results on the growth of graded twisted Calabi-Yau algebras and their modules, as an application of the general results of the previous sections and the structure theory of \cite{RR1}.

\begin{proposition}
\label{prop:cygk}
Let $A$ be a locally finite elementary twisted Calabi-Yau $k$-algebra of dimension $d$, with fixed decomposition $1 = e_1 + \cdots + e_n$ of $1$ as a sum of primitive pairwise orthogonal idempotents.   Let $S = A/J$.   Let $\mu$ be the Nakayama automorphism of $A$, and let $\underline{\ell} = (\ell_1, \dots, \ell_n)$ be the AS-index of $A$ and $L = \diag(\ell_1, \dots, \ell_n)$.  By Proposition~\ref{prop:perfect-hs}, the matrix Hilbert series of $A$ is of the form $h_A(t) = q(t)^{-1}$ for some matrix polynomial $q(t) \in M_n(\mb{Z}[t])$.
\begin{enumerate}
 \item  In the notation of Proposition~\ref{prop:resolution form1}, we have
\[
n(S e_r, i, j, m) = n(S e_{\mu(j)}, (d-i),  r , \ell_{\mu(j)}-m)
\]
for all $r, i,j,m$. As a consequence, $q(t)$ satisfies the functional equation
\[
q(t) = (-1)^d P t^L q(t^{-1})^T,
\]
where $P$ is the permutation matrix associated to the action of the Nakayama automorphism of $A$ on the vertices $\{1, 2, \dots n\}$ with $P_{ij} = \delta_{\mu(i) j}$ and $t^L = \operatorname{diag}(t^{\ell_1}, \dots, t^{\ell_n})$ as described in Section~\ref{sec:intro}.
\item $q(t)$ commutes with $Pt^L$.  As a consequence, for each $j$
there is an equality of vector Hilbert series
\[
t^{\ell_j} h_{Ae_{\mu^{-1}(j)}}(t) = P t^L h_{Ae_j}(t),
\] and
$\GKdim Ae_j = \GKdim A e_{\mu(j)}$.
\end{enumerate}
\end{proposition}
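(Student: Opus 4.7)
The plan is to first establish the pointwise identity
\[
n(Se_r, i, j, m) = n(Se_{\mu(j)}, d-i, r, \ell_{\mu(j)}-m)
\]
of part~(1); the functional equation $q(t) = (-1)^d Pt^L q(t^{-1})^T$ will then follow by direct bookkeeping from the formula $q(t)_{jr} = \sum_{i,m}(-1)^i n(Se_r, i, j, m)t^m$ derived in the proof of Proposition~\ref{prop:perfect-hs}(1). Part~(2) will then be extracted from the functional equation by an elementary algebraic manipulation.

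To prove the pointwise identity, I would invoke a graded Serre-type duality for twisted Calabi-Yau algebras, namely
\[
\Ext^i_A(M, N) \cong D\Ext^{d-i}_A(U \otimes_A N, M)
\]
for finite-dimensional graded left $A$-modules $M$ and $N$, where $D$ denotes the graded $k$-dual. This duality follows from the defining property $\Ext^d_{A^e}(A, A^e) \cong U$ (with the other Ext groups vanishing) via standard derived-category manipulations, using the same minimal $A^e$-projective resolution of $A$ to simultaneously compute both sides of the proposed isomorphism. Applying the duality with $M = Se_r$ and $N = Se_j$, invoking $U \otimes_A Se_j \cong Se_{\mu(j)}(\ell_{\mu(j)})$ from Proposition~\ref{prop:graded bimodule}(4), and reading off graded dimensions via Proposition~\ref{prop:resolution form1} then yields the identity after unwinding the grading shift by $\ell_{\mu(j)}$.

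For part~(2), I would derive the commutation $q(t) \cdot Pt^L = Pt^L \cdot q(t)$ by iterating the functional equation. Substituting $t \mapsto t^{-1}$ and transposing produces an expression for $q(t^{-1})^T$ in terms of $q(t)$, which fed back into the original functional equation gives $q(t) = Pt^L\, q(t)\, t^{-L} P^T$; this rearranges (using $P^T P = I$ for permutation matrices) to the desired commutation. Taking inverses shows that $h_A(t) = q(t)^{-1}$ also commutes with $Pt^L$, and the vector Hilbert series equation $t^{\ell_j} h_{Ae_{\mu^{-1}(j)}}(t) = Pt^L h_{Ae_j}(t)$ is then simply the $j$-th column of the matrix identity $h_A(t) \cdot Pt^L = Pt^L \cdot h_A(t)$. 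The equality $\GKdim(Ae_j) = \GKdim(Ae_{\mu(j)})$ is then immediate, since permuting entries of a vector Hilbert series and multiplying by powers of $t$ leave its GK-dimension unchanged.

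The main obstacle will be setting up the Serre duality cleanly and carefully tracking the shifts and the permutation $\mu$ through the argument; in particular, one must confirm the identification $\Ext^d_A(Se_r, A) \cong e_{\mu^{-1}(r)}S(\ell_r)$ as graded right $A$-modules with exactly the conventions of Proposition~\ref{prop:graded bimodule}. Once this identification is settled, the rest is mechanical.
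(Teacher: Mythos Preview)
Your proposal is correct and follows essentially the same approach as the paper. The paper cites the Serre-type duality $\Ext^i_A(M,N)^* \cong \Ext^{d-i}_A(U \otimes_A N, M)$ as \cite[Corollary~4.13]{RR1} rather than deriving it from scratch, but otherwise the argument is identical: apply the duality with $M = Se_r$, $N = Se_j$, use Proposition~\ref{prop:graded bimodule}(4), read off dimensions via Proposition~\ref{prop:resolution form1}, and then for part~(2) iterate the functional equation exactly as you describe. Your remark about needing to confirm $\Ext^d_A(Se_r, A) \cong e_{\mu^{-1}(r)}S(\ell_r)$ is a slight red herring---this identification is not used in the proof, which proceeds entirely through the Ext groups between simples.
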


\begin{proof}
(1)
By \cite[Corollary~4.13]{RR1}, we have $\Ext^i_A(M,N)^* \cong \Ext^{d-i}_A(U \otimes_A N, M)$ as graded $k$-spaces, for all finite-dimensional modules $M$ and $N$, where $U$ is the Nakayama bimodule of $A$. Taking $M = Se_r$ and $N = Se_j$ and applying Proposition~\ref{prop:graded bimodule}(4),
we get
\[
\Ext^i_A(Se_r, Se_j)^* \cong  \Ext^{d-i}_A(Se_{\mu(j)}(\ell_{\mu(j)}), Se_r) \cong \Ext^{d-i}_A(Se_{\mu(j)}, Se_r)(-\ell_{\mu(j)}).
\]
Here $(-)^*$ refers to the graded dual of a locally finite $\mb{Z}$-graded vector space $V$, which satisfies $\dim_k (V^*)_m = \dim_k V_{-m}$,
and so
\begin{align*}
\dim_k \Ext^i_A(Se_r, Se_j)_{-m} &= \dim_k \Ext^{d-i}_A(Se_{\mu(j)}, Se_r)(-\ell_{\mu(j)})_m \\
&= \dim_k \Ext^{d-i}_A(Se_{\mu(j)}, Se_r)_{m-\ell_{\mu(j)}},
\end{align*}
implying by Proposition~\ref{prop:resolution form1} that $n(Se_r, i,j,m) =n(Se_{\mu(j)}, d-i, r, \ell_{\mu(j)}-m)$.

For the functional equation, we begin with the following calculation, where the third equality
below follows by the reindexing $i \mapsto d-i$, $m \mapsto \ell_{\mu(j)}-m$:
\begin{align*}
q_{jr}(t)  & = \sum_{i, m} (-1)^i n(Se_r, i, j, m)t^m \\
& = \sum_{i, m} (-1)^i n(Se_{\mu(j)}, d-i, r, \ell_{\mu(j)}-m)t^m \\
& = \sum_{i, m} (-1)^{d-i} n(Se_{\mu(j)}, i, r, m)t^{\ell_{\mu(j)}-m} \\
& = (-1)^d t^{\ell_{\mu(j)}} \sum_{i,m} (-1)^i n(Se_{\mu(j)}, i, r, m) t^{-m} \\
& = (-1)^d t^{\ell_{\mu(j)}} q_{r, \mu(j)}(t^{-1}).
\end{align*}
By definition, the permutation matrix $P$ has $P_{ij} = \delta_{\mu(i) j}$.  Thus $(P^{-1})_{ij} = (P^T)_{ij} = \delta_{i \mu(j)}$.
Then $q_{r, \mu(j)}(t^{-1}) = (q(t^{-1}) P^{-1})_{rj}$ so that
\[
t^{\ell_{\mu(j)}} q_{r, \mu(j)}(t^{-1})=(q(t^{-1}) P^{-1} t^{L'})_{rj},
\]
where $L' = \operatorname{diag}(\ell_{\mu(1)}, \dots, \ell_{\mu(n)})$.  Finally it is easy to see that
$ t^{L'} P= P t^L$, where $L = \operatorname{diag}(\ell_1, \dots, \ell_n)$.
Thus we get
\[
q(t) = (-1)^d [q(t^{-1}) P^{-1} t^{L'}]^T = (-1)^d t^{L'} P q(t^{-1})^T = (-1)^d P t^L q(t^{-1})^T
\]
as stated.

(2) By part (1), we have $q(t) = (-1)^d P t^L q(t^{-1})^T$.  Replacing $t$ by $t^{-1}$ in this equation yields
\[
q(t^{-1}) = (-1)^d P t^{-L} q(t)^T,
\] since $(t^{-1})^L= t^{-L}$.
Taking the transpose gives
\[
q(t^{-1})^T = (-1)^d q(t) t^{-L} P^{-1}.
\]
Substituting this expression in the original formula yields
\[
q(t) = (-1)^d P t^L (-1)^d q(t) t^{-L} P^{-1} = P t^L q(t) (P t^L)^{-1}.
\]
So $q(t)$ commutes with $P t^L$ as claimed.   Let $\mathbf{e}_1, \dots, \mathbf{e}_n$ be the standard basis vectors for $\mb{Z}^n$,
so that $q(t) \mathbf{e}_j$ is the $j$th column of $q(t)$, which is the vector Hilbert series $h_{Ae_j}(t)$.  Then applying both sides of the equality
$q(t) P t^L = P t^L q(t)$ to $\mathbf{e}_j$, we have
\[
q(t) P t^L \mathbf{e}_j = q(t) P t^{\ell_j} \mathbf{e}_j = t^{\ell_j} q(t) P \mathbf{e}_j = t^{\ell_j} q(t) \mathbf{e}_{\mu^{-1}(j)} = t^{\ell_j} h_{Ae_{\mu^{-1}(j)}}(t)
\]
and
\[
P t^L q(t) \mathbf{e}_j = P t^L h_{Ae_j}(t)
\]
from which the equality of vector Hilbert series is now clear.

The equality $\GKdim Ae_{\mu^{-1}(j)} = \GKdim Ae_j$ easily follows from this equality of Hilbert series, since the adjustment by $t^{\ell_j}$  or  $P t^L$ on the left does not change the overall growth.  Since this is true for all $j$, it is equivalent to
$\GKdim Ae_{\mu(j)} = \GKdim Ae_j$ for all $j$.
\end{proof}

To close this section, we will briefly describe the structure of those locally finite elementary twisted
Calabi-Yau algebras of dimension $d =0,1$. The structure of these algebras is relatively simple, and it
was discussed in~\cite{RR1} without requiring the algebras to be elementary. We specialize those
results to the case where $S \cong k^n$. The case $d = 0$ is particularly trivial.

\begin{remark}
The locally finite elementary twisted Calabi-Yau algebras $A$ of dimension $d = 0$ are those algebras
of the form $A = S \cong k^n$. Indeed, it was shown in~\cite[Theorem~4.19]{RR1} that such an algebra
is a finite-dimensional separable algebra, which implies that $A = S$, and is also Calabi-Yau.
But if $A$ is elementary, then $S \cong k^n$ for some integer $n \geq 1$. Thus the underlying quiver of
$A$ consists of $n$ vertices and no arrows.
\end{remark}

Now we describe the locally finite elementary algebras that are twisted Calabi-Yau of dimension~1 as
certain quiver algebras. Recall that we may reduce to the case of an indecomposable algebra as
described earlier in this section.

\begin{proposition}\label{prop:twisted CY 1}
Let $A$ be an indecomposable elementary locally finite graded algebra.
Then $A$ is twisted Calabi-Yau of dimension~1 if and only if $A \cong kQ$ for a weighted quiver
$Q$ such that $Q$ is a directed cycle with at least one arrow of positive weight.
If the vertices $\{1,\dots,n\}$ of $Q$ are indexed such that each arrow
is of the form $i-1 \to i \pmod{n}$ and has weight $\ell_i$, then this graded path algebra $kQ$ has
AS index $(\ell_1,\dots,\ell_n)$.  There is a choice of Nakayama automorphism $\mu$ of $kQ$ which
satisfies $\mu(e_i) = e_{i+1}$ and $\mu(\alpha_i) = \alpha_{i+1}$, where indices are computed modulo~$n$.
These algebras are noetherian and have GK-dimension~1.
\end{proposition}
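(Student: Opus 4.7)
The plan is to use Theorem~\ref{thm:cy-vers-asreg}(2) as the main bridge between the twisted Calabi-Yau condition and an explicit description of the minimal projective resolutions of the simple modules $Se_i$. These resolutions then constrain (or construct) the underlying weighted quiver via Proposition~\ref{prop:resolution form1} and Lemma~\ref{lem:minres}.

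For the ``only if'' direction, assume $A$ is twisted Calabi-Yau of dimension~$1$. Theorem~\ref{thm:cy-vers-asreg}(2) gives a permutation $\pi$ and integers $\ell_i$ such that $\Ext^1_A(Se_i,A) \cong e_{\pi(i)}S(\ell_i)$, which is one-dimensional and concentrated in degree $-\ell_i$, and all other $\Ext^j_A(Se_i,A)$ vanish. Feeding these dimensions into Proposition~\ref{prop:resolution form1} shows that the minimal graded projective resolution of $Se_i$ has the simple form
\[
0 \to Ae_{\pi(i)}(-\ell_i) \to Ae_i \to Se_i \to 0,
\]
with no higher projectives. By Lemma~\ref{lem:minres}(2), this translates into: vertex $i$ of the underlying quiver $Q$ has a unique incoming arrow, which originates at $\pi(i)$ with weight $\ell_i$, and there are no relations ending at $i$. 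Since $\pi$ is a permutation, each vertex also has a unique outgoing arrow, so $Q$ is a disjoint union of directed cycles; indecomposability of $A$ combined with Lemma~\ref{lem:quiver facts}(3) forces $Q$ to be a single cycle, and the absence of relations gives $A \cong kQ$. Local finiteness of $A$ combined with Lemma~\ref{lem:path algebra}(2) forces the weight-zero subquiver $Q_0$ to be acyclic, which in a cycle means at least one arrow must have positive weight. Re-indexing vertices so that the unique arrow into $i$ runs from $i-1$, we obtain $\pi(i)=i-1$ and AS-index $(\ell_1,\dots,\ell_n)$.

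For the converse, take $A = kQ$ for such a cycle. Local finiteness follows immediately from Lemma~\ref{lem:path algebra}(2), and a direct calculation inside $kQ$ produces the minimal graded projective resolution
\[
0 \to Ae_{i-1}(-\ell_i) \xrightarrow{\,\cdot\,\alpha_i\,} Ae_i \to Se_i \to 0
\]
for each simple module, whose kernel is zero because the path algebra of a cycle has no left-annihilators of arrows. Thus the graded global dimension is $1$. Applying $\Hom_A(-,A)$ and computing the cokernel yields $\Ext^1_A(Se_i,A) \cong e_{i-1}S(\ell_i)$ as graded right $A$-modules, while $\Hom_A(Se_i,A)=0$ since the path algebra of a cycle has no nonzero simple submodules. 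Theorem~\ref{thm:cy-vers-asreg}(2) then certifies that $A$ is twisted Calabi-Yau of dimension~$1$ with AS-index $(\ell_1,\dots,\ell_n)$ and $\pi(i)=i-1$.

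Finally, for the Nakayama automorphism and the growth statement: matching the numerical data from the explicit resolution above against Proposition~\ref{prop:cygk}(1) with $d=1$, the identity $n(Se_r,1,j,m) = n(Se_{\mu(j)},0,r,\ell_{\mu(j)}-m)$ forces $\mu(i)=i+1$ on vertices. The ``cyclic rotation'' map $\mu\colon kQ \to kQ$ defined by $\mu(e_i)=e_{i+1}$ and $\mu(\alpha_i)=\alpha_{i+1}$ is a well-defined graded automorphism of $A$ (since there are no relations), and I would verify that ${}^1 A^\mu$, together with the AS-index shifts predicted by Proposition~\ref{prop:graded bimodule}(2), realizes the Nakayama bimodule by checking $U\otimes_A Se_j \cong Se_{\mu(j)}(\ell_{\mu(j)})$ from Proposition~\ref{prop:graded bimodule}(4). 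Noetherianness and $\GKdim(A)=1$ are straightforward: each $e_i A$ has a single path of each length (going around the cycle), giving linear growth; and $A$ is Morita equivalent to $k[x]$ (with $x$ of degree $\sum_i \ell_i$), hence noetherian. The main obstacle I anticipate is precisely the last identification of the Nakayama automorphism on arrows, since $\mu$ is only determined up to an inner automorphism and care is needed to pin down the natural representative claimed in the statement.
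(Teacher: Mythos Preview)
Your approach to the main equivalence is genuinely different from the paper's and largely correct. The paper invokes \cite[Theorem~6.11]{RR1}, which characterizes twisted Calabi-Yau algebras of dimension~1 as tensor algebras $T_S(V)$ with $V$ an invertible $(S,S)$-bimodule, and then analyzes which quivers give invertible arrow spaces. You instead work directly through the generalized AS~regular condition and the shape of minimal resolutions, which is more self-contained within this paper. One small imprecision: Proposition~\ref{prop:resolution form1} computes $n(M,i,j,m)$ via $\Ext^i_A(M,Se_j)$, not $\Ext^i_A(M,A)$; to extract $P_1 \cong Ae_{\pi(i)}(-\ell_i)$ from $\Ext^1_A(Se_i,A)\cong e_{\pi(i)}S(\ell_i)$ you should instead dualize the length-1 minimal resolution and observe that the dual complex is again minimal (matrix entries remain in $J$), forcing $\Hom_A(P_1,A)\cong e_{\pi(i)}A(\ell_i)$.

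There are two errors in the final paragraph. First, your rotation map $\mu$ is \emph{not} a graded automorphism unless all $\ell_i$ are equal: $\deg\alpha_i=\ell_i$ while $\deg\alpha_{i+1}=\ell_{i+1}$ (cf.\ Example~\ref{ex:switch}). It is only an ungraded algebra automorphism. Second, $kQ$ is not Morita equivalent to $k[x]$: the idempotent $e_1$ is not full, since short paths avoiding vertex~1 do not lie in $kQe_1kQ$. What is true (and suffices for noetherianity and $\GKdim=1$) is that $c=\sum_i c_i$, the sum of the length-$n$ cycles at each vertex, is central, and $kQ$ is a free $k[c]$-module of rank $n^2$ on the paths of length $<n$. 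The paper instead cites \cite[Corollary~6.4]{RR1} for noetherianity.

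The gap you anticipate is real. Your argument, together with the degree constraint in Proposition~\ref{prop:graded bimodule}(3), shows only that any Nakayama automorphism satisfies $\mu(\alpha_i)=c_i\alpha_{i+1}$ for some scalars $c_i\in k^\times$; composing with an inner automorphism (conjugation by $\sum\lambda_je_j$) replaces $c_i$ by $c_i\lambda_i\lambda_{i+1}^{-1}$, and one can normalize all $c_i$ to $1$ if and only if $\prod_i c_i=1$. Nothing in your outline establishes this product condition. The paper sidesteps this entirely: after reweighting so all arrows have weight~1, it identifies $kQ$ with the skew group algebra $k[x]\#k\mathbb{Z}_n$ via the McKay quiver, and then reads off the Nakayama automorphism from the homological determinant using \cite[Theorem~4.1]{RRZ1}. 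This is a substantially different and more computational route.
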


\begin{proof}
It is shown in~\cite[Theorem~6.11]{RR1} that $A$ is twisted Calabi-Yau of dimension~1 if and only if
$A \cong T_S(V)$, where $S \cong A/J(A)$ is separable and $V$ is an invertible $(S,S)$-bimodule.
Because $A$ is elementary, this means that $S$ is a vertex space and $V$ is an arrow space for $A$,
so that $A \cong T_S(V) \cong kQ$ where $Q$ is the underlying quiver of $A$. Thus we may
identify $A = kQ = T_S(V)$ where $S = ke_1 \oplus \cdots \oplus ke_n$ is the canonical vertex space
of $kQ$ and $V$ is its canonical arrow space.

It remains to determine those conditions that make $V$ into an invertible $(S,S)$-bimodule.
First, suppose that $V$ is invertible, with $W = V^{-1}$. For each primitive idempotent $e_i \in S$,
we have $e_i V \otimes_S W \cong e_i S = ke_i$, which is a 1-dimensional vector space. But note that
\[
e_i V \otimes_S W = \bigoplus_j e_i V e_j \otimes e_j W.
\]
Since $W$ is invertible, $e_j W \neq 0$ for each $j$.
As the above vector space is 1-dimensional, it follows that for each $i$, there exists a unique $j$
such that $e_i V e_j \neq 0$ and that this $e_i V e_j$ is spanned by a single arrow from $i$ to $j$.
A similar argument beginning with the isomorphism $W \otimes_S V e_i \cong Se_i = ke_i$ shows
that each vertex is the target of a unique arrow.  Thus $Q$ is a union of finitely many directed cycles.
By Lemma~\ref{lem:quiver facts}, since $A$ is indecomposable the quiver $Q$ is connected and hence is a single directed cycle.
Thus, up to relabeling the vertices, we may assume that each vertex $i$ is the source of a single
arrow whose target is $i + 1 \pmod{n}$.  Lemma~\ref{lem:path algebra} shows that this path algebra is locally finite if and only if at least one arrow of $Q$ has positive weight.

Conversely, suppose that $A = kQ$ for a weighted quiver $Q$ which is a single directed cycle, with vertices labeled as
above so that it has a unique arrow $\alpha_i \colon i-1 \to i \pmod{n}$ of weight $\ell_i$, and where $\ell_i > 0$ for at least one $i$.  Notice that the $n$th tensor power $V^{\otimes n} = V \otimes_S \cdots \otimes_S V$, which corresponds in $T_S(V) \cong kQ$ to the paths of length~$n$, has basis consisting of a single cycle for each vertex $i$ that
begins and ends at $i$ and has total degree $\ell = \sum \ell_i > 0$. It follows that
$V^{\otimes n} \cong S(\ell)$, so that $V$ is invertible with graded inverse
$V^{-1} \cong V^{\otimes (n-1)}(-\ell)$.  Thus $A = kQ$ is twisted Calabi-Yau of dimension~1.
By \cite[Corollary~6.4]{RR1}, $A$ is noetherian.

Note that $k \alpha_i$ is a one-dimensional right $S$-module in degree $\ell_i$ that
is annihilated by $1-e_i$, while $e_i$ acts as the identity.
Thus $k \alpha_i \cong e_i S(-\ell_i)$,
so that $V \cong \bigoplus e_i S(-\ell_i)$ as graded right $S$-modules.
Now we must have $V^{-1} = \Hom_S(V_S, S_S) \cong \bigoplus Se_i(\ell_i)$ as graded left $S$-modules.
This forces $U \cong \bigoplus Ae_i(\ell_i)$ as graded left $A$-modules, and hence the left AS-index of $A$ is
$\underline{\ell} = (\ell_1, \dots, \ell_n)$.

For the rest of the proof, we reweight the quiver $Q$ so all arrows have weight $1$.  This is still a locally finite algebra
and does not change the path algebra $kQ$ up to ungraded isomorphism, and so will not affect the calculation of the 
Nakayama automorphism or the verification of the GK-dimension.  
Let $G = \langle g \rangle$ be a cyclic group of order $n$, and let $G$ act on a polynomial ring $k[x]$ by automorphisms where $g(x) = \zeta x$ for a primitive $n$th root of unity $\zeta$.  Then
$Q$ is the McKay quiver of this group action and so it is standard that $kQ$ is isomorphic to the skew group algebra $B = k[x] \# kG$ \cite[Corollary 4.1]{BSW}; under this isomorphism $e_i$ corresponds to $1 \# f_i$ and $\alpha_i$ corresponds to $x \# f_i$, where $1 = f_1 + \cdots + f_n$ is
the decomposition of $1$ as a sum of primitive idempotents in $kG$.  Explicitly, we may take $f_i = \sum_{j=1}^n (\zeta^{i-1})^j g^j$.
Now it is well known that $k[x]$ is Calabi-Yau of dimension $1$, so by \cite[Theorem 4.1]{RRZ1} the Nakayama automorphism
$\mu_B$ of $k[x] \# kG$ is equal to $1 \# \Xi^l_{\hdet}$, where $\Xi^l_{\hdet}$ is the left  winding automorphism of $kQ$ associated to the homological determinant.   In this case, since $k[x]$ is commutative, $\hdet(g)$ is
simply the determinant $\zeta$ of the action of $g$ on $kx$, and so by definition $\Xi^l_{\hdet}(g^i) =   \hdet(g^i) g^i = \zeta^i g^i$ for all $i$.  Now we see that $\mu_B(f_i) = f_{i+1}$ and $\mu_B(x \# f_i) = x \# f_{i+1}$.  Transferring this back to the algebra $A$ we get $\mu_A(e_i) = e_{i+1}$ and $\mu_A(\alpha_i) = \alpha_{i+1}$ as claimed.

Finally, in this unweighted case it is clear that the total Hilbert series of $kQ$ is $h\tot_{KQ}(t) = n/(1-t)$, since there are precisely $n$ paths in $Q$ of length $d$, for each $d$.  Then $\GKdim kQ = 1$ by Lemma~\ref{lem:rational series}.
\end{proof}

\begin{example}
\label{ex:switch}
Let $Q$ be the quiver with vertices $\{1, \dots, n\}$ and whose only arrows are $\alpha_i \colon i-1 \to i \pmod{n}$
where $\alpha_i$ has weight~$\ell_i$. Let $A = kQ$ and assume that at least one $\ell_i \neq 0$.
Then the path algebra $A = kQ$ is twisted Calabi-Yau of dimension $1$ by Proposition~\ref{prop:twisted CY 1}.
As shown in the proof above, the Nakayama automorphism $\mu$ satisfies $\mu(e_i) = e_{i+1}$ (with indices
computed modulo~$n$), so by part~(3) of Proposition~\ref{prop:graded bimodule}, the restriction
$\mu: e_{i-1} A e_i \to e_i A e_{i+1}$ is homogeneous of degree $\ell_{i+1}-\ell_i$.
Thus the Nakayama automorphism $\mu$ of $A$ is graded if and only if all of the $\ell_i$ are equal.
\end{example}

\section{Twisted CY algebras of global dimension $2$}
\label{sec:dim2}

In this section, we specialize to the case of twisted Calabi-Yau algebras with global dimension $2$.
There is much about twisted Calabi-Yau algebras of global dimension 2 that is known, but these
algebras have not typically been studied in the generality we consider here of elementary finitely
graded algebras, with no restrictions on the degrees of the generators.

There are several important precursors to the work below. In~\cite{Z}, Zhang investigated the
structure of AS~regular algebras of dimension~2, which are the locally finite twisted Calabi-Yau algebras
that are connected. In~\cite[Theorem~3.2]{Bo}, Bocklandt characterized the Calabi-Yau algebras
of the form $kQ/I$ (with $Q$ having all arrows of weight~1) as preprojective algebras on certain quivers.
A similar analysis was carried out for twisted Calabi-Yau algebras of the form $kQ/I$ (again, with
all arrows of $Q$ having weight~1) in~\cite[Section~6]{BSW}, although there was no
corresponding investigation into which quivers $Q$ occur.

Throughout this section, let $A$ be a locally finite graded elementary twisted Calabi-Yau algebra of dimension $2$.
Fix a vertex basis $\{e_1, \dots, e_n \}$ for $A$, so that $1 = e_1 + \cdots + e_n$  of $1$ as a sum of primitive orthogonal idempotents $e_i \in A_0$, and let $S = ke_1 + \cdots + ke_n$.   Similarly, fix an arrow basis
and thus an arrow space in $A$.  Let $Q$ be the underlying weighted quiver and fix the corresponding presentation
$kQ/I \cong A$ given by Lemma~\ref{lem:quiver facts}.  We write the trivial path at vertex $i$ in $kQ$ also as $e_i$.
We write the image of $x \in kQ$ in $A$ as $\overline{x}$.
We let $\mc{B}$ be the basis of arrows in $kQ$ and $V$ the $k$-span of $\mc{B}$,
so the images in $A$ of the arrows in $\mc{B}$ are the given fixed arrow basis.
Since $A$ is twisted Calabi-Yau, as described in the previous section we also have a (not necessarily graded) Nakayama automorphism $\mu: A \to A$, where $\mu$ also acts on the vertices $\{1, \dots, n \}$.  We also have the left AS-index $(\ell_1, \dots, \ell_n)$.

Let $Q$ be a quiver with vertices $\{1,\dots,n\}$ and let $\mu$ be an algebra automorphism of the path algebra $kQ$ that restricts to an automorphism of the canonical arrow space $S = \bigoplus ke_i$, so that $\mu(e_i) = e_{\mu(i)}$
for an associated automorphism of the vertices that we also denote by $\mu$.
Following~\cite[Section~2]{BSW}, an element $\omega \in kQ$ will be called a \emph{$\mu$-twisted weak potential} if
it satisfies the following conditions, which are readily seen to be equivalent:
\begin{itemize}
\item $\omega s = \mu(s) \omega$ for all $s \in S$;
\item $\omega e_i = e_{\mu^{-1}(i)} \omega$ for $i = 1,\dots,n$;
\item $\omega = \sum \omega_i$ for some $\omega_i \in e_{\mu^{-1}(i)} kQ e_i$.
\end{itemize}

\begin{proposition}
\label{prop:gldim2}
Let $A$ be a locally finite elementary graded twisted Calabi-Yau algebra of dimension~2, and
keep all of the notation above.
\begin{enumerate}
\item
There exists an injective $k$-linear map $\tau: V \to kQ$ such that $W = \tau(V)$  is also an arrow space for $kQ$;
$\tau( e_i  V_d e_r) \subseteq e_{\mu^{-1}(r)} (kQ)_{\ell_r-d} e_i$ for all $i,d,r$; and
where defining
\[
h_r = \sum_{x \in \mc{B} \cap kQe_r } \tau(x) x \in e_{\mu^{-1}(r)} (kQ)_{\ell_r} e_r
\]
for each $r$,
$\{h_1, \dots, h_n\}$ is an $S$-compatible minimal generating set for the ideal $I$ of relations for $A$.
Furthermore, $h = \sum h_r$ is a $\mu$-twisted weak potential that also generates $I$.

\item  The minimal graded projective resolution of the left simple $A$-module $S e_r$ has the form
\[
\xymatrix{
0 \to A e_{\mu^{-1}(r)}(-\ell_r) \ar[rrr]^-{( \overline{\tau(x_1)}, \dots, \overline{\tau(x_p)})}
&&& \displaystyle{\bigoplus_{i=1}^p Ae_{k_i}(-d_i)} \ar[r]^-{ \left(\begin{smallmatrix}\overline{x_1}\\ \vdots \\ \overline{x_p}\end{smallmatrix}\right)}
& Ae_r \to Se_r \to 0,
}
\]
where the $x_i$ range over $\mc{B} \cap kQe_r$ (that is, over the arrows whose target is $r$).

\item The matrix-valued Hilbert series of $A$ is given by
\[
h_A(t) = q(t)^{-1} \quad \text{for} \quad \ q(t) = I - N(t) + P t^L,
\]
where $P$ is the permutation matrix with $P_{ij} = \delta_{\mu(i) j}$, $N =N(t)$ is the weighted
incidence matrix of $Q$, and $L = \operatorname{diag}(\ell_1, \dots, \ell_n)$.
Furthermore, the equation
\[
N(t) = P t^L (N^T(t^{-1}))
\]
is satisfied, and the matrices $Pt^L, N$,  and $N^T(t^{-1})$ pairwise commute.
\end{enumerate}
\end{proposition}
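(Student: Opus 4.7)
The plan is to first use Theorem~\ref{thm:cy-vers-asreg} together with the symmetry formula $n(Se_r, i, j, m) = n(Se_{\mu(j)}, 2-i, r, \ell_{\mu(j)} - m)$ of Proposition~\ref{prop:cygk}(1) to pin down the minimal projective resolution of each simple left module $Se_r$. At $i = 0$, the standard minimal cover gives $P_0^{(r)} = Ae_r$. At $i = 2$, the formula forces the unique contribution $j = \mu^{-1}(r)$, $m = \ell_r$, so $P_2^{(r)} = Ae_{\mu^{-1}(r)}(-\ell_r)$. At $i = 1$, Lemma~\ref{lem:quiver facts}(2) shows that $P_1^{(r)} = \bigoplus_{x \colon t(x) = r} Ae_{s(x)}(-d_x)$, with one summand per arrow pointing into $r$. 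Lemma~\ref{lem:minres} then identifies $\delta_1$ as right-multiplication by the column vector with entries $\overline{x}$, while $\delta_2$ is right-multiplication by a column vector whose entries lie in the appropriate graded pieces $e_{\mu^{-1}(r)} A_{\ell_r - d_x} e_{s(x)}$; lifting those entries to $kQ$ produces the elements $\tau(x) \in e_{\mu^{-1}(r)}(kQ)_{\ell_r - d_x} e_{s(x)}$. This establishes part~(2) together with the degree and endpoint conditions on $\tau$ in part~(1).

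The condition $\delta_1 \delta_2 = 0$ immediately yields $h_r = \sum_x \tau(x) x \in I$, and a second application of Lemma~\ref{lem:minres}(2) ensures that $\{h_1, \dots, h_n\}$ is an $S$-compatible minimal generating set for $I$. Moreover $h = \sum h_r$ is a $\mu$-twisted weak potential, because each $h_r$ lies in $e_{\mu^{-1}(r)}(kQ) e_r$, matching the third characterization in the definition. The main obstacle is the remaining claim that $W = \tau(V)$ is an arrow space for $kQ$, i.e.\ that $W \oplus J(kQ)^2 = J(kQ)$. The dimension symmetry from Proposition~\ref{prop:cygk}(1) yields $\dim e_i V_d e_r = \dim e_{\mu^{-1}(r)} V_{\ell_r - d} e_i$, so the graded $(S,S)$-components of $W$ have the correct total dimensions to match $V$; what is less obvious is that the projection $\tau(V) \to J(kQ)/J(kQ)^2 \cong V$ is an isomorphism rather than having some $\tau(x)$ collapse into $J(kQ)^2$. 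I would address this by invoking the Calabi-Yau duality: applying $\RHom_A(-, A)$ to the minimal resolution of $S$, together with the description of the Nakayama bimodule in Proposition~\ref{prop:graded bimodule}, produces a minimal projective resolution of $U$ as a right $A$-module in which the transpose of the matrix of $\delta_2$ plays the role of the right-arrow-multiplication map. The entries of that transpose automatically form an arrow basis for the right-module structure, which forces $\tau(V)$ to be an arrow space on the left. Equivalently, one may appeal to non-degeneracy of the Yoneda pairing $\Ext^1_A(S, S) \otimes_S \Ext^1_A(S, S) \to \Ext^2_A(S, S)$ implied by the twisted Calabi-Yau property.

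For part~(3), Proposition~\ref{prop:perfect-hs} gives $h_A(t) = q(t)^{-1}$ with $q_{jr}(t) = \sum_{i,m} (-1)^i n(Se_r, i, j, m) t^m$. Substituting the three nonzero contributions from $P_0^{(r)}$, $P_1^{(r)}$, $P_2^{(r)}$ identified above yields immediately $q(t) = I - N(t) + P t^L$. The identity $N(t) = P t^L N^T(t^{-1})$ then falls out of the functional equation $q(t) = P t^L q(t^{-1})^T$ of Proposition~\ref{prop:cygk}(1) by expanding and simplifying using $P P^T = I$. Finally, Proposition~\ref{prop:cygk}(2) says that $P t^L$ commutes with $q(t)$, hence with $N(t)$; combining this with the functional equation gives $N^T(t^{-1}) = (P t^L)^{-1} N(t)$, from which the remaining pairwise commutativity statements follow.
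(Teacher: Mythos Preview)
Your proposal is correct and follows essentially the same route as the paper: pin down $P_2^{(r)}$ via the symmetry formula in Proposition~\ref{prop:cygk}(1), read off $\delta_1,\delta_2$ via Lemma~\ref{lem:minres}, and then dualize the resolution to a right-module resolution whose first differential must be right multiplication by an arrow basis, forcing $W=\tau(V)$ to be an arrow space. One small imprecision: when you apply $\Hom_A(-,A)$ to the resolution of $Se_r$ you obtain a (shifted) minimal projective resolution of the \emph{right simple} $e_{\mu^{-1}(r)}S$, not of the Nakayama bimodule $U$ itself; the paper invokes generalized AS~regularity (Theorem~\ref{thm:cy-vers-asreg}) at exactly this point, which is the cleanest way to justify that the dual complex is again exact and minimal.
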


\begin{proof}
By Lemma~\ref{lem:minres}(2) and the fact that $\gldim(A) = 2$, the minimal projective resolution of $Se_r$ has the form
\[
\xymatrix{
0 \to \displaystyle{\bigoplus_{i=1}^b Ae_{m_i}(-s_i)} \ar[r]^-{ \left(\overline{g_{ij}}\right)}
& \displaystyle{\bigoplus_{j=1}^p Ae_{k_j}(-d_j)} \ar[r]^-{ \left(\begin{smallmatrix}\overline{x_1}\\ \vdots \\ \overline{x_p}\end{smallmatrix}\right)}
& Ae_r \to Se_r \to 0
}
\]
where $d_i = \deg x_i$, the $k$-span $V_r$ of $x_1, \dots, x_p$ satisfies $V_r \oplus J^2(kQ)e_r = J(kQ)e_r$ as $k$-spaces, and where
if $g_i = \sum_j g_{ij} x_j$ and $G = \{g_1, \dots, g_b\}$, then there is a $S$-compatible minimal set of generators $H$ for the ideal
$I$ such that $G = H \cap kQe_r$.  If we write this resolution as $M_{\bullet}$ with
$M_i = \bigoplus_{j,m} [ Ae_j(-m)]^{n(Se_r, i, j, m)}$, then by Proposition~\ref{prop:cygk},
\[
n(S e_r, 2, j, m) = n(S e_{\mu(j)}, 0,  r , \ell_{\mu(j)}-m)
\]
 and so we get $M_2 = Ae_{\mu^{-1}(r)}(-\ell_r)$; in particular, $b = 1$.
We define a linear map $V_r \to kQ$ by $x_i \mapsto g_{1i}$ and extend this linearly to
obtain $\tau \colon V = \bigoplus_{r=1}^n V_r \to kQ$.
The minimal projective resolution of $Se_r$ now looks like
\[
\xymatrix{
0 \to A e_{\mu^{-1}(r)}(-\ell_r) \ar[rr]^-{( \overline{\tau(x_1)}, \dots, \overline{\tau(x_p)})}
&& \displaystyle{\bigoplus_{j=1}^p Ae_{k_j}(-d_j)}
	\ar[r]^-{\left(\begin{smallmatrix}\overline{x_1} \\ \vdots \\ \overline{x_p}\end{smallmatrix}\right)}
& Ae_r \to Se_r \to 0.
}
\]
We conclude that $h_r = \sum_{j=1}^p \tau(x_j) x_j \in e_{\mu^{-1}(r)} (kQ)_{\ell_r} e_r$
is the unique relation ending at vertex $r$ in a $S$-compatible minimal set of generators $\{ h_1, \dots, h_n \}$ for $I$.
Since $x_j$ is an arrow of degree $d_j$ from $k_j$ to $r$, clearly $\tau(x_j) \in  e_{\mu^{-1}(r)} (kQ)_{\ell_r - d_j} e_{k_j}$.
Thus $\tau$ satisfies $\tau(e_i V_d e_r) \in e_{\mu^{-1}(r)} kQ_{\ell_r - d} e_i$ for all $i, r, d$.
Note also that since each $h_r \in e_{\mu^{-1}(r)}kQe_r$, the element $h = \sum h_r$ is a
$\mu$-twisted weak potential by construction, where $\mu$ is the unique lift of $\mu_A$ to an automorphism
of $kQ$.  And because $h_r = he_r$ and $h = h1 = \sum he_r = \sum h_r$, we have $I = (h_1,\dots,h_n) = (h)$.

Since A is twisted Calabi-Yau, it is generalized AS regular in the sense of Theorem~\ref{thm:cy-vers-asreg}.
Applying $\Hom_A(-, A)$ to the deleted minimal projective resolution of $Se_r$ yields
\[
\xymatrix{
0  \leftarrow e_{\mu^{-1}(r)}A(\ell_r) && \ar[ll]_-{( \overline{\tau(x_1)}, \dots, \overline{\tau(x_p)})} \bigoplus_{i=1}^p e_{k_i}A(d_i) & \ar[l]_-{\left(\begin{smallmatrix}\overline{x_1} \\ \vdots \\ \overline{x_p}\end{smallmatrix}\right)} e_rA \leftarrow 0,
}
\]
where the free modules are now
column vectors and the maps are left multiplication by the indicated matrices.   By generalized AS~regularity, there must be a permutation $\pi$ of $\{1, \dots, n \}$ and integers $m_r$ such that this is a deleted minimal projective resolution of the right module $e_{\pi(r)}S(m_r)$ for all $r$.  Clearly this forces $\pi = \mu^{-1}$ and $m_r = \ell_r$, and shifting by $-\ell_r$ we get that
\[
\xymatrix{
0 \leftarrow  e_{\mu^{-1}(r)}S \leftarrow e_{\mu^{-1}(r)}A
&& \ar[ll]_-{( \overline{\tau(x_1)}, \dots, \overline{\tau(x_p)})} \displaystyle{\bigoplus_{i=1}^p e_{k_i}A(d_i-\ell_r)}
& \ar[l]_-{ \left(\begin{smallmatrix}\overline{x_1} \\ \vdots \\ \overline{x_p}\end{smallmatrix}\right)}\ar[l] e_rA(-\ell_r) \leftarrow 0
}
\]
is the minimal projective resolution of the right module $e_{\mu^{-1}(r)}S$.
By a right-sided version of Lemma~\ref{lem:minres}, this means that $W_r = \tau(V_r)$, the $k$-span of $\tau(x_1), \dots, \tau(x_p)$,
satisfies $e_{\mu^{-1}(r)} J^2(kQ) \oplus W_r = e_{\mu^{-1}(r)} J(kQ)$ as $k$-spaces.  Summing over $r$, we see that
$J^2(kQ) \oplus W = J(kQ)$ as $k$-spaces as required, where $W = \bigoplus_r W_r = \tau(V)$, and so $W$ is also an arrow space for $kQ$.  In addition, since the number of arrows in an arrow space is an invariant of $Q$, we must have
$\dim_k W = \dim_k V$, and this forces $\tau$ to be an injective linear map.  This finishes the proof of parts (1) and (2).

For (3), we saw in Proposition~\ref{prop:cygk} that the matrix Hilbert series of $A$ is $h_A(t) = q(t)^{-1}$, where
$q_{jr}(t) = \sum_{i, m} (-1)^i n(Se_r, i, j, m)t^m \in \mb{Z}[t]$.   In this global dimension $2$
case, by the form of the resolution calculated above we have $q(t) = H_0 - H_1 + H_2$ where $H_0 = I$, $H_2 = Pt^L$,
with $L = \operatorname{diag}(\ell_1, \dots, \ell_n)$.  We also see that $n(Se_r, 1, j, m)$ is the number of arrows in $Q$ of weight $m$ from $j$ to $r$  (or the dimension of $\Ext^1(Se_r, Se_j)_{-m}$, as we saw earlier in Lemma~\ref{lem:quiver facts}), and so
$H_1 = N(t)$, the weighted incidence matrix of $Q$.
The Hilbert series formula follows.

Now from the functional equation $q(t) =  (-1)^d P t^L q(t^{-1})^T$  given in Proposition~\ref{prop:cygk}, where
$d = 2$, we
get
\begin{align*}
q(t) &= P t^L(I - N(t^{-1}) + Pt^{-L})^T \\
&= Pt^L (I - N^T(t^{-1}) + t^{-L} P^{-1}) \\
&= Pt^L - Pt^L N^T(t^{-1}) + I,
\end{align*}
which implies that $N(t) = Pt^L N^T(t^{-1})$ as claimed.  Then $N(t^{-1})^T = (Pt^{-L}N^T)^T = N t^{-L}P^{-1}$ and so
$N = Pt^L N t^{-L}P^{-1} = (Pt^L)N(Pt^L)^{-1}$.  Thus $N$ commutes with $Pt^L$, and since $N = (Pt^L)N^T(t^{-1})$ we also get that $N$ commutes with $N^T(t^{-1})$.  Finally, since $N$ commutes with $Pt^L$,
taking the transpose and replacing $t$ by $t^{-1}$ we get that $N^T(t^{-1})$ commutes with
$(Pt^{-L})^T = t^{-L}P^{-1} = (Pt^L)^{-1}$, so $N^T(t^{-1})$ also commutes with $Pt^L$.
\end{proof}

\begin{example}
Consider $A = k \langle x, y \rangle/(yx -xy - x^{m+1})$ where $y$ has degree $m$ and $x$ has degree $1$.  The underlying weighted
quiver $Q$ of $A$ is one vertex with two loops of degree $1$ and $m$.  Using the graded lexicographic ordering with $x < y$,
the relation has leading term $yx$ which
does not overlap itself, and so by the diamond lemma $\{ x^i y^j \mid i, j \geq 0 \}$ is a $k$-basis for $A$, and $A$ has Hilbert
series $1/(1-t)(1-t^m) = 1/(1- (t + t^m) + t^{m+1})$.  It is well known that $A$ is AS regular, which is
equivalent to twisted Calabi-Yau in this connected graded case \cite[Lemma 1.2]{RRZ1}.  (One can also use Lemma~\ref{lem:cy2-criterion} below to verify that $A$ is twisted Calabi-Yau.)
In the notation of Proposition~\ref{prop:gldim2}, we have $\tau(x) =y - x^m$, $\tau(y) = -x$.  Thus $W = \tau(V)$ really can be a different arrow space for $kQ$ than $V = kx + ky$.
\end{example}

As a first consequence of the structure of twisted Calabi-Yau algebras of dimension~2, we obtain
the following information about the underlying quiver and GK-dimension of indecomposable projective
modules.

\begin{lemma}
\label{lem:strongconnected}
Let $A$ be a locally finite graded elementary twisted Calabi-Yau algebra of global dimension 2.
Let $1 = e_1 + \cdots + e_n$ be an expression of $1$ as a sum of primitive orthogonal idempotents.
Assume that $A$ is indecomposable, and let $Q$ be the underlying weighted quiver of $A$.
\begin{enumerate}
\item$Q$ is strongly connected; that is, given any two vertices $i$ and $j$ there is a directed path from $i$ to $j$.
\item Suppose that $\GK(A) < \infty$.   Then all indecomposable graded left projectives $A e_i$ satisfy $\GKdim (Ae_i) = \GKdim(A)$.
\end{enumerate}
\end{lemma}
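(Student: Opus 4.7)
For part~(1), the key input is the functional equation $N(t) = Pt^L N^T(t^{-1})$ from Proposition~\ref{prop:gldim2}(3). Specializing at $t = 1$ and writing $M = N(1)$ for the unweighted incidence matrix of $Q$ gives $M = PM^T$, or equivalently $M_{ij} = M_{j,\mu(i)}$: arrows from $i$ to $j$ are in bijection (with multiplicity) with arrows from $j$ to $\mu(i)$. First I would observe that every vertex has an incoming arrow, since generalized AS regularity (Theorem~\ref{thm:cy-vers-asreg}) forces the projective dimension of each simple $Se_r$ to equal $2$, making the middle term $\bigoplus_i Ae_{k_i}(-d_i)$ of the minimal resolution in Proposition~\ref{prop:gldim2}(2) nonzero; summing $M_{ij} = M_{j,\mu(i)}$ over $j$ then shows that every vertex has an outgoing arrow as well. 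Now assume for contradiction that $Q$ is not strongly connected, and let $C$ be a sink strongly connected component, so every arrow leaving $C$ returns to $C$. For each $i \in C$, pick an outgoing arrow $i \to j$ (so $j \in C$); then $M_{j,\mu(i)} > 0$ produces an arrow from $j \in C$, which must land back in $C$, giving $\mu(i) \in C$. Thus $\mu(C) \subseteq C$, and by bijectivity $\mu(C) = C$. Symmetrically, if $M_{ij} > 0$ with $j \in C$, then $M_{j,\mu(i)} > 0$ exhibits an arrow out of $C$, so $\mu(i) \in C$ and hence $i \in \mu^{-1}(C) = C$; thus $C$ receives no arrows from outside. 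It follows that $C$ is a union of undirected connected components of $Q$; since $A$ is indecomposable, $Q$ is connected by Lemma~\ref{lem:quiver facts}(3), so $C$ is all of $Q$, contradicting the assumption.

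For part~(2), the plan is to derive a recurrence for $d_r := \GKdim Ae_r$ and combine it with the strong connectivity of part~(1). Split the resolution of $Se_r$ from Proposition~\ref{prop:gldim2}(2) into
\[
0 \to Ae_{\mu^{-1}(r)}(-\ell_r) \to \bigoplus_i Ae_{k_i}(-d_i) \to K \to 0
\qquad\text{and}\qquad
0 \to K \to Ae_r \to Se_r \to 0,
\]
where $K$ denotes the image of the first differential. All terms are perfect graded $A$-modules of finite GK-dimension: the indecomposable projectives are perfect with GK-dimension bounded above by $\GKdim A < \infty$; $Se_r$ is finite-dimensional; and $K$ inherits perfection from the first short exact sequence. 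Applying Proposition~\ref{prop:perfect-hs}(4) to the second sequence gives $\GKdim K = d_r$ (since $\GKdim Se_r = 0$), and applied to the first yields $\max_i d_{k_i} = \max(\GKdim Ae_{\mu^{-1}(r)}, d_r)$. Invoking Proposition~\ref{prop:cygk}(2) to identify $\GKdim Ae_{\mu^{-1}(r)} = d_r$, this collapses to
\[
d_r = \max\{d_s : \text{there is an arrow } s \to r \text{ in } Q\}.
\]
Thus $s \mapsto d_s$ is non-decreasing along every directed path in $Q$. For any two vertices $u,v$, part~(1) furnishes directed paths in both directions, forcing $d_u = d_v$. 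Hence all $d_r$ coincide, and each must equal $\GKdim A = \max_r d_r$.

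The main subtlety is setting up the recurrence in part~(2): one must carefully track perfection and finite GK-dimension for the syzygy module $K$ in order to invoke Proposition~\ref{prop:perfect-hs}(4), and then use the $\mu$-invariance from Proposition~\ref{prop:cygk}(2) to absorb the contribution of the Nakayama-shifted term $Ae_{\mu^{-1}(r)}(-\ell_r)$ into $d_r$ itself. Once the monotonicity $d_r = \max_{s\to r} d_s$ is established, the rest is a straightforward combinatorial consequence of the strong connectivity proved in part~(1).
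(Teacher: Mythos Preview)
Your proof is correct. Part~(2) follows essentially the same argument as the paper: split the minimal resolution of $Se_r$ into two short exact sequences, apply GK-exactness (Proposition~\ref{prop:perfect-hs}(4)), use Proposition~\ref{prop:cygk}(2) to identify $\GKdim Ae_{\mu^{-1}(r)}$ with $d_r$, and obtain the monotonicity $d_s \leq d_r$ along every arrow $s \to r$; strong connectivity then forces all $d_r$ to coincide. One small omission: before concluding $\GKdim K = d_r$ from the second short exact sequence you should note that $K \neq 0$, which follows from $\mathrm{pd}\, Se_r = 2$ (an argument you already made in part~(1)).

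For part~(1) your route is genuinely different from the paper's. The paper works directly with the map $\tau$ from Proposition~\ref{prop:gldim2}(1): an arrow $x \colon i \to r$ yields a nonzero $\tau(x) \in e_{\mu^{-1}(r)} kQ\, e_i$, hence an arrow $\mu^{-1}(r) \to i$; iterating gives paths $\mu^{-m}(r) \to \mu^{-m+1}(r)$, and since $\mu$ has finite order one eventually obtains a path from $r$ back to $i$. You instead specialize the functional equation $N(t) = Pt^L N^T(t^{-1})$ at $t=1$ to get the incidence-matrix identity $M = PM^T$ (equivalently $M_{ij} = M_{j,\mu(i)}$), and then run a clean combinatorial argument on a sink strongly connected component. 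Both arguments exploit the same underlying duality in the resolution coming from the twisted Calabi-Yau condition, but yours is purely matrix-theoretic and avoids any reference to $\tau$ or the relation structure. The paper's version has the advantage that it generalizes verbatim to dimension~$3$ (see Lemma~\ref{lem:strongconnected3}), where the middle terms of $q(t)$ no longer separate cleanly and no analogue of your identity $M = PM^T$ is available.
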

\noindent Of course, in part (2) of the theorem one expects that $\GK(A) = 2$.  We will show this in Theorem~\ref{thm:GK2}
below, in the case where $Q$ has arrows of weight 1.
\begin{proof}
\noindent
(1) Since $A$ is indecomposable, the quiver $Q$ is connected by Lemma~\ref{lem:quiver facts}(3).
To begin the proof, we will show that if $Q$ has an arrow $i \to r$, then $Q$ also has a path from $r$ to $i$.
By Proposition~\ref{prop:gldim2}, the minimal graded projective resolution of the left simple $A$-module $S e_r$ has the form
\[
0 \to A e_{\mu^{-1}(r)}(-\ell_r) \overset{\delta_2}{\lra} M \overset{\delta_1}{\lra}  Ae_r \to Se_r \to 0,
\]
where $M$ is a direct sum of those projectives $Ae_i$ such that there is an arrow from $i$  to $r$ in $Q$.
It also follows from Proposition~\ref{prop:gldim2} that if there is an arrow $x \in e_i A e_r$, then there is a nonzero element
$\tau(x) \in e_{\mu^{-1}(r)} A e_i$ which is an element of some arrow space for $kQ$, so there is also an arrow in $Q$ from $\mu^{-1}(r)$ to $i$.  Thus there is some path
(of length 2) from $\mu^{-1}(r)$ to $r$ in $Q$.  Since $r$ is arbitrary, we get similarly a path from $\mu^{-n}(r) \to \mu^{-n+1}(r)$
for all $n \geq 1$.  Composing these we get a path from $\mu^{-n}(r)$ to $\mu^{-1}(r)$ for all $n \geq 1$, and following with the path
from $\mu^{-1}(r)$ to $i$ we get a path from $\mu^{-n}(r)$ to $i$.  Since $\mu$ is acting on a  finite set of vertices,
we have $\mu^{-n}(r) = r$ for some $n$ and thus there is a path from $r \to i$ as desired.

Now given any vertex $r$ of $Q$, let $Q'$
be the largest strongly connected subquiver of $Q$ containing $r$.  Suppose that $Q' \neq Q$.  Since $Q$ is connected, there is
a there is a vertex $h \in Q'$ and a vertex $j \in Q \setminus Q'$ such that $h$ and $j$ are joined by an arrow (in some direction).
Then there must be paths both from $h$ to $j$ and $j$ to $h$ and so clearly $Q' \cup \{ j \}$ is also strongly connected,
contradicting the choice of $Q'$.  It follows that $Q$ is strongly connected.

(2)  Consider the minimal projective resolution of $Se_r$ as in part (1).
Let $K = \coker \delta_2$.  Then there is an exact sequence
\[
0 \to A e_{\mu^{-1}(r)}(-\ell_r) \overset{\delta_2}{\lra} M \to K \to 0
\]
in which all terms are perfect, so
$\GKdim(M) = \max(\GKdim(A e_{\mu^{-1}(r)}), \GKdim K)$ by Proposition~\ref{prop:perfect-hs}(4).
Similarly, there is an exact sequence $0 \to K \to Ae_r \to Se_r \to 0$ where all terms are perfect;
note that $K \neq 0$ since every simple $Se_r$ must have projective dimension~$2$. Because
$\GKdim Se_r = 0$, we have $\GKdim Ae_r = \max(\GKdim K, 0) = \GKdim(K)$.
By Proposition~\ref{prop:cygk}(2), we have $\GKdim Ae_{\mu^{-1}(r)} = \GKdim Ae_r$.
It follows that
\[
\GKdim(M) = \max(\GKdim(A e_{\mu^{-1}(r)}), \GKdim K) = \GKdim Ae_r.
\]
Then $\GKdim Ae_i \leq \GKdim Ae_r$ for all $i$ such that there is an arrow from $i$ to $r$,
because these $Ae_i$ are direct summands of $M$ as described in the proof of part~(1).
By induction, $\GKdim Ae_i \leq \GKdim Ae_r$ for all $i$ such that there is a directed path from $i$ to $r$.
By part~(1), this is every vertex~$i$.  Letting $r$ vary we conclude that $\GKdim Ae_i = \GKdim Ae_j$
for all $i,j$.  Then since $A = \bigoplus_{i = 1}^n Ae_i$, we also have $\GKdim Ae_i = \GKdim A$ for all $i$.
\end{proof}

The next result is required for the companion paper~\cite{RR1}.  It is used in a
proof~\cite[Theorem~6.6]{RR1} that locally finite twisted Calabi-Yau algebras of dimension $2$ with finite
GK-dimension are automatically noetherian.

\begin{proposition}
\label{prop:forRR1}
Let $A$ be a graded locally finite twisted Calabi-Yau algebra of dimension $2$.  Suppose that $\GKdim(A) < \infty$.
Let $P_1 \overset{d_1}{\to} P_2 \overset{d_2}{\to} P_3$ be an exact sequence of graded projective $A$-modules.
If $P_1$ and $P_3$ are finitely generated, then so is $P_2$.
\end{proposition}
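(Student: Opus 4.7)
The plan is to reduce to the case that $A$ is elementary and indecomposable, write $P_2$ explicitly as a direct sum of shifts of indecomposable projectives, and then derive a contradiction from the assumption that there are infinitely many summands by means of a growth estimate that uses the finite GK-dimension. First I would apply Lemma~\ref{lem:cy-reduce-elem} to reduce to $A$ elementary, since both base change to a finite separable extension and passage to a Morita equivalent ring preserve the twisted Calabi--Yau property, graded projectivity, finite generation, and exactness of three-term complexes. Using the block decomposition $A \cong A_1 \times \cdots \times A_m$ and \cite[Proposition~4.6]{RR1}, the given exact sequence decomposes compatibly and the twisted Calabi--Yau property passes to each block, so I may further assume that $A$ is indecomposable. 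The case of finite-dimensional $A$ is trivial by noetherianity, so I may also assume $d := \GKdim(A) \geq 1$.

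In this setting, the exact sequence $0 \to \im d_1 \to P_2 \to \im d_2 \to 0$ exhibits $P_2$ as an extension of a quotient of the finitely generated $P_1$ by a submodule of the finitely generated $P_3$; hence $P_2$ is left-bounded and locally finite. Because $A$ is elementary, a standard graded Nakayama argument shows that $P_2$ decomposes as $P_2 \cong \bigoplus_{\alpha \in \Gamma} A e_{j_\alpha}(n_\alpha)$ for some (a priori possibly infinite) index set $\Gamma$, and the goal becomes to show that $\Gamma$ is finite. Suppose for contradiction that $\Gamma$ is infinite, and set $F(M) := \#\{\alpha \in \Gamma : -n_\alpha \leq M\}$; this is finite for each $M$ by local finiteness of $P_2$ but tends to $\infty$ with $M$, since $n_\alpha$ is bounded above by the left-boundedness of $P_2$.

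The key is a sandwich estimate on the partial sum $T(M_0) := \sum_{M=0}^{M_0} \dim_k (P_2)_M$. By Lemma~\ref{lem:strongconnected}(2), $\GKdim(Ae_j) = d$ for every vertex $j$, and by Proposition~\ref{prop:perfect-hs}(3), the multiplicity $\eps(h\tot_{Ae_j})$ is positive. These combine to give $\sum_{L \leq M} \dim_k (Ae_j)_L \sim c_j M^d$ with $c_j > 0$ for every $j$, and since there are only finitely many vertices, $c := \min_j c_j > 0$. A direct calculation, split on whether $n_\alpha$ is positive or negative, shows that each $\alpha$ with $-n_\alpha \leq M_0/2$ contributes at least $c' M_0^d$ to $T(M_0)$ for some fixed $c' > 0$ independent of $\alpha$, yielding the lower bound $T(M_0) \geq c' F(M_0/2) M_0^d$. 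On the other hand, the exact sequence implies $\dim_k (P_2)_M \leq \dim_k (P_1)_M + \dim_k (P_3)_M$, and finite generation of $P_1$ and $P_3$ (with $\GKdim \leq d$) gives the upper bound $T(M_0) \leq C M_0^d$ for some constant $C$. Combining yields $F(M_0/2) \leq C/c'$, a fixed constant, contradicting $F(M) \to \infty$. Hence $\Gamma$ is finite and $P_2$ is finitely generated.

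The main obstacle is the lower bound in the third paragraph: it requires the indecomposability reduction to invoke Lemma~\ref{lem:strongconnected} and obtain $\GKdim(Ae_j) = d$ at every vertex, combined with the uniform positivity of the multiplicities $c_j$ coming from Proposition~\ref{prop:perfect-hs}(3), together with a careful case split on the signs of the shifts $n_\alpha$ to produce a constant $c'$ that does not depend on $\alpha$.
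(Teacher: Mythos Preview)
Your proposal is correct and follows essentially the same strategy as the paper: reduce to the elementary case, use Lemma~\ref{lem:strongconnected}(2) to ensure that every indecomposable projective $Ae_j$ has GK-dimension equal to $d = \GKdim(A)$, and then argue that infinitely many such summands in $P_2$ would force too much growth. The two arguments differ only in bookkeeping. The paper observes that $d_1(P_1)$ lands in finitely many summands of $P_2$, so $d_2$ embeds the remaining infinitely many summands into $P_3$; it then invokes the multiplicity machinery of Proposition~\ref{prop:perfect-hs}(3),(4) to get $\sum_{i=1}^r \eps(M_i) \leq \eps(P_3)$ with each $\eps(M_i)$ a positive multiple of a fixed $\delta$, contradicting $r \to \infty$. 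You instead bound $\dim_k (P_2)_M \leq \dim_k (P_1)_M + \dim_k (P_3)_M$ directly from the short exact sequence $0 \to \im d_1 \to P_2 \to \im d_2 \to 0$ and compare the partial sums by hand. Your route is slightly more elementary (it avoids the formal multiplicity $\eps$ and the additivity statement of Proposition~\ref{prop:perfect-hs}(4)), and you are explicit about the reduction to the indecomposable case, which the paper uses implicitly when invoking Lemma~\ref{lem:strongconnected}. The paper's route is cleaner once that machinery is in place. Either way, the content is the same: every $Ae_j$ contributes a definite positive ``size'' in dimension $d$, so only finitely many can fit.
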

\begin{proof}
We reduce to the elementary case.  Suppose the result fails and consider an exact sequence
$P_1 \overset{d_1}{\to} P_2 \overset{d_2}{\to} P_3$ where $P_1$ and $P_3$ are finitely generated and $P_2$ is not.
Using Theorem~\ref{thm:reduce-elem}, we have an an elementary $L$-algebra $A'' = e(A \otimes_k L)e$ for
some finite extension of fields $k \subseteq L$.  Applying the exact functor from graded $A$-modules to graded $A''$-modules
given by $M \to e(M \otimes_k L)$, we produce an exact sequence of projective $A''$-modules
$P''_1 \to P''_2 \to P''_3$ where $P''_1$ and $P''_3$ are finitely generated and $P''_2$ is not.
Thus we can assume that $A$ is elementary from now on.

Now by Lemma~\ref{lem:strongconnected}, we get that writing $1 = e_1 + \cdots + e_n$ where the $e_i$ are primitive orthogonal idempotents, then $\GKdim(Ae_i) = \GKdim(A)$ for all $i$.  Suppose that $\{M_i\}_{i = 1}^{\infty}$ is a sequence
of nonzero finitely generated graded projective $A$-modules and $N$ is another finitely graded projective module such that
$\GKdim M_i = \GKdim N$ for all $i$.  We claim that there does not exist an injective graded $A$-module homomorphism
$\bigoplus_{i=1}^{\infty} M_i \to N$.  Each finitely generated graded projective module is trivially perfect.  If there is a graded embedding $\bigoplus_{i=1}^{\infty} M_i \to N$, then applying Proposition~\ref{prop:perfect-hs}(4), since all projectives in question have the same finite GK-dimension, we have in particular that $\eps(\bigoplus_{i=1}^{r} M_i) = \sum_{i=1}^r \eps(M_i) \leq \eps(N)$, for all $r$.
By Proposition~\ref{prop:perfect-hs}(3), any perfect graded $A$-module $M$ of finite GK-dimension
has multiplicity $\eps(M) > 0$ which is an integer multiple of some fixed number $\delta = \eps(D(t))^{-1}$ depending only on $A$.  This forces $r \leq \eps(N)/\delta$, contradicting the assumption that $r$ can be chosen arbitrarily large.  This proves the claim.

Finally, suppose that $P_2$ is infinitely generated, so it is a direct sum of infinitely many indecomposable graded projective $A$-modules. Since $P_1$ is finitely generated, $d_1(P_1)$ lies in finitely many of the summands of $P_2$.  Thus by exactness, $d_2$ gives an embedding of infinitely many summands of $P_2$ into $P_3$.  This contradicts the claim of the previous paragraph.
\end{proof}

Proposition~\ref{prop:gldim2} shows that a finitely graded elementary twisted Calabi-Yau algebra of global dimension 2 must have a very special form.   We can turn this around and ask which algebras of that special form are in fact twisted Calabi-Yau.

\begin{definition}
Let $Q$ be a finite weighted quiver with vertices $\{1, \dots, n \}$ and canonical arrow basis
$\mc{B}$ with
arrow space $V$ given by the span of $\mc{B}$ in $kQ$.  Suppose there is a permutation $\mu$ of $\{1, \dots, n \}$, an injective  $k$-linear graded map $\tau: V \to kQ$ such that $W = \tau(V)$ is again an arrow space for $kQ$, and a vector of integers
$(\ell_1, \dots, \ell_n )$ such that $\tau( e_i  V_d e_r) \subseteq e_{\mu^{-1}(r)} (kQ)_{\ell_r-d} e_i$ for all $i,d,r$.
Then the element $h = \sum_{x \in \mc{B}} \tau(x) x$ is a $\mu$-twisted weak potential, and we form the factor algebra
\[
A_2(Q, \tau) = kQ/(h) = kQ/(h_1, \dots, h_n)
\]
where the elements
\[
h_r = he_r = e_{\mu^{-1}(r)} h e_r = \sum_{x \in \mc{B} \cap kQe_r} \tau(x) x
\]
form a minimal $S$-compatible set of generators $\{ h_1, \dots, h_n \}$ for the ideal $(h)$.
Note that $\mu$ and the numbers $\ell_i$ are determined by $\tau$, so $A_2(Q, \tau)$ is determined just by $Q$ and $\tau$ as indicated by the notation.  In the case where all arrows of $Q$ have weight $1$ and $\tau$ is induced by a permutation of the arrows, a quiver $Q$ with such a map $\tau$ is known as a \emph{translation quiver}, and the relations $h_i$ are known as \emph{mesh relations}.
\end{definition}

In our general setting we will continue to refer to $A_2(Q, \tau)$ as the path algebra of a translation quiver with mesh relations.

Given a locally finite graded $k$-algebra $A$, we let $\lsoc(A)$ be the \emph{graded left socle} of $A$,  that is
$\{ x \in A \mid J(A) x \} = 0$, where $J(A)$ is the graded Jacobson radical as usual.  This is the same as the sum of all simple graded left ideals.  Similarly $\rsoc(A) = \{x \in A \mid xJ(A) = 0 \}$ denotes the graded right socle.
The following result can be useful in determining when the algebra $A_2(Q, \tau)$ is twisted Calabi-Yau.

\begin{lemma}
\label{lem:cy2-criterion}
Let $A = A_2(Q, \tau)$ as above with its natural grading.  Let $\mu$ be the associated permutation of $\{1, \dots, n \}$, with corresponding
permutation matrix $P$ such that $P_{ij} = \delta_{\mu(i)j}$, let $(\ell_1, \dots, \ell_n)$ be the associated
set of integers, and let $L = \operatorname{diag}(\ell_1, \dots, \ell_n)$.  Let $N = N(t)$ be the weighted incidence matrix of $Q$.  Then the following
are equivalent:
\begin{enumerate}
\item $A$ is twisted Calabi-Yau of dimension $2$.
\item  The matrix Hilbert series of $A$ satisfies
$h_A(t) = (I - N + Pt^L)^{-1}$.
\item $A$ has trivial graded right socle.
\item $A$ has trivial graded left socle.
\end{enumerate}
\end{lemma}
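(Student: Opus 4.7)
The plan is to derive $(1) \Rightarrow (2), (3), (4)$ from the structure theorem (Proposition~\ref{prop:gldim2}) and then obtain the converses $(3) \Rightarrow (1)$, $(4) \Rightarrow (1)$, and $(2) \Rightarrow (1)$ by analyzing a natural candidate resolution of $S$. Implication $(1) \Rightarrow (2)$ is immediate from Proposition~\ref{prop:gldim2}(3). For $(1) \Rightarrow (3)$: Proposition~\ref{prop:gldim2}(2) provides a minimal graded projective resolution of $Se_r$ whose leftmost nontrivial map is $\delta_2\colon Ae_{\mu^{-1}(r)}(-\ell_r) \to P_1(r)$, sending $a \mapsto (a\,\overline{\tau(x_i)})_i$ (with $x_i$ running over arrows ending at $r$), and $\delta_2$ is injective. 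Since $W = \tau(V)$ is an arrow space for $kQ$, the elements $\{\overline{\tau(x_i)}\}$ (taken over all $r$) form a $k$-basis of $\overline{W}$, and $\overline{W}$ generates $J(A)$; hence $a \in Ae_{\mu^{-1}(r)}$ annihilates all such $\overline{\tau(x_i)}$ on the right if and only if $a \in \rsoc(A) \cap Ae_{\mu^{-1}(r)}$. Collecting over $r$, the joint injectivity of the $\delta_2$ is equivalent to $\rsoc(A) = 0$, establishing $(3)$. By left-right symmetry of the twisted Calabi-Yau property, $A^{op}$ is also twisted CY of dimension 2; applying the same argument to $A^{op}$ gives $\lsoc(A) = 0$, i.e., $(4)$.

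For the converses, consider the natural candidate complex
\[
K_\bullet\colon \; 0 \to P_2 \xrightarrow{d_2} P_1 \xrightarrow{d_1} A \xrightarrow{\pi} S \to 0,
\]
with $P_2 = \bigoplus_r Ae_{\mu^{-1}(r)}(-\ell_r)$, $P_1 = \bigoplus_{x \in \mathcal{B}} Ae_{s(x)}(-\deg x)$, and maps built from $\tau$ as in Proposition~\ref{prop:gldim2}(2). This is a complex since $d_1 d_2$ evaluates on each summand to $h_r = 0$ in $A$, it is exact at $A$ because $\im d_1 = J(A) = \ker\pi$, and trivially exact at $S$. The socle analysis already identifies $\ker d_2$ with the direct sum of the shifted $e_{\mu^{-1}(r)}$-components of $\rsoc(A)$, so assumption $(3)$ gives $\ker d_2 = 0$. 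Supposing in addition that $\ker d_1 = \im d_2$, the complex $K_\bullet$ becomes a minimal graded projective resolution of $S$ of length 2, so $A$ is homologically smooth with $\gldim A = 2$. Applying $\Hom_A(-,A)$ to the deleted resolution and computing the cokernel as in Proposition~\ref{prop:gldim2} then gives $\Ext^2_A(Se_r, A) \cong e_{\mu^{-1}(r)} S(\ell_r)$ and $\Ext^i_A(Se_r, A) = 0$ for $i \neq 2$, so by Theorem~\ref{thm:cy-vers-asreg} $A$ is twisted Calabi-Yau of dimension 2. The symmetric argument for $A^{op}$ handles $(4) \Rightarrow (1)$. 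For $(2) \Rightarrow (1)$, the identity $h_A(I - N + Pt^L) = I$ combined with the alternating-Hilbert-series computation for $K_\bullet$ yields $h_{H_1(K_\bullet)} = h_{H_2(K_\bullet)}$; a further argument (tied to the socle identification of $H_2$) forces both to vanish, again producing the resolution.

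The central obstacle is thus establishing middle exactness $\ker d_1 = \im d_2$ under $(3)$. The plan is a direct computation in $kQ$: given a tuple $(\bar a_i) \in \ker d_1$, lift to $\sum a_i x_i \in (h) \subset kQ$ and write this as a finite sum $\sum_k b_k h_{r_k} c_k$. Each $c_k \in e_{r_k} kQ$ decomposes as $\lambda_k e_{r_k}$ plus a sum of paths of positive length, each of which can be written as $d_k^{\alpha} \cdot \alpha$ for an arrow $\alpha$; expanding $h_{r_k} c_k$ accordingly, every term takes the form $(\text{element of } kQ) \cdot x_i$ for some arrow $x_i$. Matching coefficients on each $x_i$ using the right linear independence of the arrows in $kQ$ noted in the proof of Proposition~\ref{prop:gldim2} (namely, $\bigoplus kQe_{k_i}(-d_i) \to kQ$, $(a_i) \mapsto \sum a_i x_i$, is injective) shows that $(\bar a_i)$ modulo $(h)$ lies in $\im d_2$. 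The $\mu$-twisted weak potential property $h_r \in e_{\mu^{-1}(r)} kQ e_r$ is essential for this matching to close up correctly. This direct computation in $kQ$ is the one technically delicate step, and with it in hand the rest of the equivalences follow by the bookkeeping sketched above.
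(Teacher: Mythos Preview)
Your overall strategy is sound and the socle identification of $\ker d_2$ is exactly what the paper does, but you have misidentified the ``central obstacle.'' Middle exactness of your candidate complex $K_\bullet$ is not something that needs a direct computation in $kQ$: it is immediate from Lemma~\ref{lem:minres}. By the very definition of $A_2(Q,\tau)$, the elements $\{h_1,\dots,h_n\}$ form an $S$-compatible minimal generating set for $I$ and the $x_j$ are the canonical arrow basis, so Lemma~\ref{lem:minres}(1) says that $P_2 \xrightarrow{d_2} P_1 \xrightarrow{d_1} A \to S \to 0$ is already the beginning of the minimal projective resolution of $S$. In particular $\ker d_1 = \im d_2$ holds unconditionally, and the only possible failure of exactness is $K_r := \ker(d_2|_{Ae_{\mu^{-1}(r)}(-\ell_r)})$. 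Your last paragraph is essentially a reproof of the relevant implication of Lemma~\ref{lem:minres} (the equivalence of conditions (i)--(iv) in its proof), so it is correct but redundant.

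Once this is observed, the paper's argument is shorter and cleaner than your outline. One always has $h_A(t)(I - N + Pt^L) = I + E(t)$ where $E(t)$ records the Hilbert series of the $K_r$; thus $(2)$ is equivalent to $E(t)=0$, i.e.\ all $K_r=0$, which gives $\gldim A = 2$, and then the dual complex is handled exactly as you say to obtain generalized AS regularity. There is no need for your ``$h_{H_1} = h_{H_2}$ plus a further argument'' step, which you left vague: $H_1=0$ is automatic, so $(2)$ gives $H_2=0$ directly. For $(3)\Leftrightarrow(1)$ the paper uses the same socle identification $K_r = \rsoc(A)e_{\mu^{-1}(r)}$ that you found, and $(4)\Leftrightarrow(1)$ follows by applying $(3)\Leftrightarrow(1)$ to $A\op$. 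Finally, a small point: the ``right linear independence of the arrows'' you cite is not stated in the proof of Proposition~\ref{prop:gldim2}; it is the tensor-algebra structure of $kQ$ (equivalently, the fact used inside the proof of Lemma~\ref{lem:minres}).
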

\begin{proof}
Keep the notation set up before the statement of the lemma.
By Lemma~\ref{lem:minres}, the minimal projective resolution of the
simple module $Se_r$ of $A$ begins with
\[
\xymatrix{
0 \to K_r \to A e_{\mu^{-1}(r)}(-\ell_r) \ar[r]^-{\delta_2}
& \displaystyle{\bigoplus_{i=1}^p Ae_{k_i}(- d_i)}
	\ar[rr]^-{ \delta_1 = \left(\begin{smallmatrix}\overline{x_1} \\ \vdots \\ \overline{x_p}\end{smallmatrix}\right)}
&& Ae_r \to Se_r \to 0,
}
\]
where $x_1, \dots, x_p$ is a basis of $\mc{B} \cap kQe_r$ and $K_r$ is the kernel of
the map 
\[
\delta_2 = ( \overline{\tau(x_1)}, \dots, \overline{\tau(x_p)}).  
\]
As usual, $\overline{x}$ indicates the image of $x \in kQ$ in $A = kQ/(h)$.

Following the argument in the proof of Proposition~\ref{prop:perfect-hs}(1),
for each $r$ and corresponding standard basis vector $\mathbf{e}_r = (0, \dots, 1, \dots, 0)^T$ we obtain
an equality of vector Hilbert series
\[
h_{K_r}(t) + \mathbf{e}_r = \sum_i (-1)^i \sum_{j,m} n(Se_r, i, j, m) t^m h_{Ae_j}(t) = \sum_j q_{jr}(t) h_{Ae_j}(t),
\]
where in this case $q(t) = I - N(t) + Pt^L$ by the calculation in  Proposition~\ref{prop:gldim2}.
 Thus if $E(t)$ is the matrix polynomial with $E(t)_{mr} = h_{e_m K_r}(t)$,
then putting these vector Hilbert series into a single matrix equation gives
 $h_A(t)(I - N(t) + Pt^L) = I + E(t)$.

If $A$ is twisted Calabi-Yau of dimension $2$, then every $K_r = 0$ and $h_A(t) = (I - N + Pt^L)^{-1}$.
Thus $(1) \implies (2)$ (as was already proved in Proposition~\ref{prop:gldim2}).
Conversely, if $h_A(t) = (I - N + Pt^L)^{-1}$, then $E(t) = 0$ and hence every $K_r = 0$,
so that every $Se_r$ has a minimal projective resolution of length $2$.  It follows that $A$ has graded global dimension $2$, by
\cite[Proposition 3.18]{RR1}.  Applying $\Hom_A(-, A)$ to the (deleted) projective resolution of $Se_r$ and shifting by $-\ell_r$ gives
a complex of right modules
\begin{equation}
\label{eq:afterhom}
\xymatrix{
 e_{\mu^{-1}(r)}A
& & \ar[ll]_-{( \overline{\tau(x_1)}, \dots, \overline{\tau(x_p)})} \displaystyle{\bigoplus_{i=1}^p e_{k_i}A(d_i -\ell_r)}
& \ar[l]_-{ \left(\begin{smallmatrix}\overline{x_1} \\ \vdots \\ \overline{x_p}\end{smallmatrix}\right)}
e_r A(-\ell_r) \leftarrow 0
}
\end{equation}
where the free modules are column vectors and the maps are now left multiplications.
Since the $x_1, \dots, x_p$ span $Ve_r$, where $V$ is the span of the canonical basis of arrows, and $\tau$ satisfies
$\tau( e_i  V e_r) \subseteq e_{\mu^{-1}(r)} (kQ) e_i$,  the span of $\tau(x_1), \dots, \tau(x_p)$ is clearly equal to $e_{\mu^{-1}(r)} W$, where $W = \tau(V)$. Because $W$ is an arrow space by assumption, we have
\[
e_{\mu^{-1}(r)}W \oplus e_{\mu^{-1}(r)}J^2(kQ) = e_{\mu^{-1}(r)}J(kQ),
\]
and thus $h_r = \sum_{i=1}^p \tau(x_i)x_i$ is only relation in the $S$-compatible minimal generating set of relations which begins at $\mu^{-1}(r)$, by the definition of $A_2(Q, \tau)$.
So by a right sided version of Lemma~\ref{lem:minres}(2), and the fact that we already know that $\gldim(A) = 2$,
\eqref{eq:afterhom} must be the (deleted) minimal projective resolution of
$e_{\mu^{-1}(r)}S$.  This shows that
\[
\Ext^i(Se_r, A) \cong \begin{cases} 0 & i \neq 2 \\ e_{\mu^{-1}(r)}S(\ell_r) & i = 2 \end{cases}
\]
holds for all $r$.  We have shown that $A$ is generalized AS regular, and
this implies that $A$ is twisted Calabi-Yau by Theorem~\ref{thm:cy-vers-asreg}.
Thus $(2) \implies (1)$ and we have shown that (1) and (2) are equivalent.

The arguments above showed that $A_2(Q, \tau)$ is skew Calabi-Yau of dimension $2$ if and only if $K_r = 0$ for all $r$.
Since $\overline{\tau(x_1)}, \dots, \overline{\tau(x_p)}$ span $e_{\mu^{-1}(r)}W$, ignoring shift we have
\begin{align*}
K_r &= \{ x \in Ae_{\mu^{-1}(r)} \mid  x e_{\mu^{-1}(r)}W = 0\} \\
&=\{ x \in A \mid x \in Ae_{\mu^{-1}(r)}\ \text{and}\ x W = 0\}.
\end{align*}
Since $W$ is an arrow space, we have $A = S + W + W^2 + \cdots$  where $J(A) = W + W^2  + \cdots$,
as we noted in Lemma~\ref{lem:quiver facts}.  Since $xW = 0$ if and only if $xW^i = 0$ for all $i$, we see that
\[
K_r = \{ x \in A \mid  x J(A) = 0 \} \cap Ae_{\mu^{-1}(r)} = \rsoc(A) \cap Ae_{\mu^{-1}(r)} = \rsoc(A) e_{\mu^{-1}(r)}.
\]
This implies that $\rsoc(A) \cong K_1 \oplus \cdots \oplus K_n$ as ungraded left $A$-modules, and hence
$\rsoc(A) = 0$ if and only if $K_r = 0$ for all $r$, if and only if $A$ is twisted Calabi-Yau of dimension $2$.
Thus (2) and (3) are equivalent.

Since $A$ is twisted Calabi-Yau if and only if $A^{\op}$ is (see the remark after \cite[Definition 4.1]{RR1}), (2) and (4) are also equivalent.
\end{proof}

Since any locally finite elementary graded twisted Calabi-Yau algebra of dimension~$2$
is isomorphic to an algebra with mesh relations $A_2(Q, \tau)$, we would like to know conversely exactly for which data $(Q, \tau)$ the algebra $A_2(Q, \tau)$ is twisted Calabi-Yau.   The answer to this question is known in many special cases and we plan to answer it fully in future work.

The final result of this section addresses the following problem:  assuming that $A_2(Q, \tau)$ is twisted Calabi-Yau of dimension~2, how does its GK-dimension depend on $(Q, \tau)$?  We will answer this question in the case where all of the arrows in $Q$ have weight $1$.  Note that
when $Q$ has all arrows of weight $1$, its weighted incidence matrix $N$ is of the form $N = Mt$ where $M$ is the usual
incidence matrix.  We will state our results in this section in terms of $M$ rather than $N$.

We need to use a bit of standard matrix analysis.  Recall that for a matrix $M \in M_n(\mb{C})$, its \emph{spectral radius} is
$\rho(M) = \max \{ |\lambda| \mid \lambda\ \text{is an eigenvalue of}\ M \}$.  For integer matrices we have
the Perron-Frobenius theory, from which we will use the following results.  First, an $n \times n$ matrix
$M$ is \emph{irreducible} as long as there is no subset $\emptyset \neq S \subsetneq \{1, \dots, n \}$ such that
$M_{ij} = 0$ for all $i \in S, j \not \in S$.  It is straightforward to see that the incidence matrix $M$ of a quiver $Q$ is irreducible if and only if $Q$ is
strongly connected, that is, every two vertices are joined by some directed path.
The Perron-Frobenius theorem states that if $M$ is an irreducible matrix with nonnegative real entries, then
(i) the spectral radius $\rho = \rho(M)$ of $M$ is an eigenvalue of $M$; (ii) The eigenvalue $\rho$ has a corresponding eigenvector
$v \in \mb{R}^n$ whose entries are strictly positive real numbers; (iii) The $\rho$-eigenspace is $1$-dimensional and hence spanned by $v$; and (iv) the only eigenvectors of $M$ whose entries are all positive are multiples of $v$.  See \cite[Theorem 8.3.4, Theorem 8.4.4]{HJ}.

\begin{theorem}
\label{thm:GK2}
Let $A = A_2(Q, \tau)$ be an algebra with mesh relations as defined in the previous section, where $Q$ has all arrows of weight $1$.  Assume that $A$ is indecomposable, and let $M$ be the incidence matrix of $Q$.
Suppose that $A$ is twisted Calabi-Yau of dimension $2$.  Then:
\begin{enumerate}
\item $\rho(M) \geq 2$.
\item If $\GKdim(A) < \infty$, then $\rho(M) = 2$ and $M$ has an eigenvector $v$ corresponding
to the eigenvalue $2$ such that every entry of $v$ is a positive integer.  Moreover, $\GKdim(A) = 2$ and $A$ is noetherian.
\item If $\GKdim(A) = \infty$ then $\rho(M) > 2$, $A$ has exponential growth and $A$ is not noetherian.
\end{enumerate}
\end{theorem}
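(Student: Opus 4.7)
The approach is to use the matrix Hilbert series formula $h_A(t) = q(t)^{-1}$ with $q(t) = I - Mt + Pt^\ell$ from Proposition~\ref{prop:gldim2}(3), restrict both sides to the Perron eigenvector of $M$, and leverage positivity of coefficients.

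\emph{Setup.} Since $A$ is indecomposable with all arrows of weight $1$, Proposition~\ref{prop:graded bimodule}(5) forces the AS-index to be uniform: $\ell_1 = \cdots = \ell_n = \ell$, so $L = \ell I$. The functional equation $N(t) = Pt^L N^T(t^{-1})$ then reads $Mt = PM^T t^{\ell-1}$; comparing degrees in~$t$ and noting $M \neq 0$ (else $A \cong k^n$ would have dimension~$0$), we obtain $\ell = 2$ and $M = PM^T$. The same proposition also gives $MP = PM$. By Lemma~\ref{lem:strongconnected}(1), $Q$ is strongly connected, so the integer matrix $M$ is irreducible. Writing $\rho := \rho(M)$, Perron--Frobenius provides a strictly positive vector $v \in \mb{R}^n$ with $Mv = \rho v$ spanning the (one-dimensional) $\rho$-eigenspace. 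Since $P$ commutes with $M$ and maps positive vectors to positive vectors, $Pv = cv$ with $c > 0$; the finite order of $P$ forces $c = 1$. Thus $q(t)v = (1 - \rho t + t^2)v$, and so
\[
h_A(t) v = (1 - \rho t + t^2)^{-1} v.
\]

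\emph{Proof of (1).} Reading off coordinate~$i$ of this identity gives $v_i(1 - \rho t + t^2)^{-1} = \sum_{j} v_j [h_A(t)]_{ij}$, a nonnegative real combination of series with nonnegative integer coefficients. Hence $(1 - \rho t + t^2)^{-1}$ has all nonnegative coefficients; a direct analysis of the recursion $c_n = \rho c_{n-1} - c_{n-2}$ (or the closed form $c_n = \sin((n+1)\theta)/\sin\theta$ when $\rho = 2\cos\theta$ with $\theta \in (0,\pi)$) shows this fails whenever $\rho < 2$. Hence $\rho \geq 2$.

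\emph{Proof of (2).} Assume $\GKdim(A) < \infty$. By Proposition~\ref{prop:growth}(3), every root of $\det q(t)$ lies on the unit circle. Since the two roots $(\rho \pm \sqrt{\rho^2 - 4})/2$ of $1 - \rho t + t^2$ are roots of $\det q(t)$ (by the eigenvalue identity above), and lie on the unit circle only when $\rho \leq 2$, combined with~(1) we conclude $\rho = 2$. The one-dimensional $2$-eigenspace of~$M$ is defined over~$\mb{Q}$ (as $M$ has integer entries), so contains a rational vector, which rescales by Perron positivity to a positive integer eigenvector. For the equality $\GKdim(A) = 2$: the identity becomes $\sum_j v_j [h_A(t)]_{ij} = v_i(1-t)^{-2}$, and by coefficient-wise nonnegativity $v_j [h_A(t)]_{ij} \leq v_i(1-t)^{-2}$; comparing coefficients of $t^n$ yields $\dim_k (e_i A_n e_j) \leq (v_i/v_j)(n+1) = O(n)$, so $\GKdim(A) \leq 2$. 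The reverse inequality follows since $v_i(1-t)^{-2}$ has GK-dimension~$2$, forcing some $[h_A(t)]_{ij}$ to attain it. The noetherian property is then \cite[Theorem 6.6]{RR1}.

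\emph{Proof of (3).} Assume $\GKdim(A) = \infty$. The coefficient-wise bound just derived used \emph{only} $\rho = 2$, not the finiteness of GK-dimension; so if $\rho = 2$ we would already obtain $\GKdim(A) \leq 2 < \infty$, a contradiction. Combined with~(1), this forces $\rho > 2$. Exponential growth then follows from Proposition~\ref{prop:exact}(1), and the failure of noetherianness from Proposition~\ref{prop:exact}(2) (via Stephenson--Zhang). The only genuinely delicate step is the implication $\rho = 2 \Rightarrow \GKdim(A) < \infty$ underlying~(3): it appears at first to require controlling all roots of the polynomial $\det q(t)$, which would demand a detailed spectral analysis of the possibly non-diagonalizable pencil $I - Mt + Pt^2$. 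The positivity trick sidesteps this altogether by bounding each matrix entry of $h_A(t)$ against the single scalar series $v_i(1-t)^{-2}$ whose polynomial growth is transparent.
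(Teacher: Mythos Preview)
Your proof is correct and takes a genuinely different route from the paper's.

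The paper observes that $M = PM^T$ together with $MP = PM$ forces $M$ to be \emph{normal}, so $M$, $M^T$, and $P$ can be simultaneously unitarily diagonalized. After conjugation, $q(t)$ becomes $\operatorname{diag}(f_1,\dots,f_n)$ with $f_i(t) = 1 - \delta_i t + \zeta_i t^2$; a change of variable $u = \sqrt{\zeta_i}\,t$ reduces everything to deciding when $1 - au + u^2$ (with $a = \pm|\delta_i|$ real) has both roots on the unit circle, namely $|\delta_i|\leq 2$. This yields the two-sided equivalence $\GKdim(A)<\infty \iff \rho(M)\leq 2$, from which (3) and the ``$\rho(M)\leq 2$'' half of (2) follow; the positive-integer eigenvector is produced by a separate multiplicity argument, setting $d_i = \eps(Ae_i)/|\eps(\det q(t))^{-1}|$ and using additivity of $\eps$ along the resolution of $Se_i$ to obtain $M^T v = 2v$. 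Part~(1) is then deduced from (2) and (3).

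Your approach sidesteps diagonalization entirely by restricting $h_A(t)q(t)=I$ to the single Perron eigenvector, collapsing the problem to the scalar series $(1-\rho t+t^2)^{-1}$. This gives a self-contained proof of (1) via positivity (whereas the paper derives (1) only after establishing (2) and (3)), a quicker integer eigenvector via rationality of the one-dimensional $2$-eigenspace, and the pleasant observation that the coefficientwise bound $\dim_k(e_iA_ne_j)=O(n)$ needs only $\rho=2$, so that $\rho=2\Rightarrow\GKdim(A)\leq 2$ drops out with no further spectral input. What the paper's diagonalization buys in exchange is the full equivalence $\GKdim(A)<\infty\iff\rho(M)\leq 2$ and an eigenvector whose entries are intrinsic module-theoretic invariants of $A$.
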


\begin{proof}
By assumption, $\tau$ is a vector space map with $\tau(e_i V_d e_r) = e_{\mu^{-1}(r)} V_{\ell_r - d} e_i$ for all $i,d,r$,
where $V$ is the span of arrows in $kQ$, and where $W = \tau(V)$ is also an arrow space.  We are assuming that $V = V_1 = kQ_1$, and this forces $W = V$ and $\ell_r = 2$ for all $r$.  Now by Proposition~\ref{prop:gldim2}, $A$ has matrix Hilbert series $h_A(t) = q(t)^{-1}$, where $q(t) = (I - Mt + Pt^2)^{-1}$, with $P$ the permutation matrix associated to the action of the Nakayama automorphism $\mu$ on the vertices of $Q$.  Moreover, that result shows that $M = P M^T$, and that $M$, $P$ and $M^T$ pairwise commute.

By Proposition~\ref{prop:growth}, $A$ has finite GK-dimension if and only if all roots of the matrix polynomial $q(t)$ lie on the unit circle.  Since $M$ commutes with its transpose, it is a normal matrix, and the permutation matrix $P$ is also normal as it commutes with $P^T = P^{-1}$.  Commuting normal matrices can be simultaneously diagonalized by a unitary matrix, so there is a unitary $U \in \operatorname{GL}_n(\mb{C})$ (that is, $U^{-1} = \overline{U}^T)$
such that $D = U M U^{-1}$ and $Z = U P U^{-1}$ are diagonal.  Also $U M^T U^{-1} = (\overline{U}^{-1})^T M^T \overline{U}^T = (\overline{U} M \overline{U}^{-1})^T = \overline{D}$, and $M = P M^T$ implies that $D = Z \overline{D}$.
Write $D = \operatorname{diag}(\delta_1, \dots, \delta_n)$ and $Z = \operatorname{diag}(\zeta_1, \dots, \zeta_n)$.
Then $U (I - Mt + Pt^2) U^{-1} =  \operatorname{diag}(f_1(t), \dots, f_n(t))$
where $f_i(t) = 1 - \delta_i t + \zeta_i t^2$.  Conjugation by an invertible scalar matrix does not change the roots of a matrix polynomial, so $A$ will have finite GK-dimension if and only if  the matrix polynomial $\operatorname{diag}(f_1(t), \dots, f_n(t))$ has all of its roots
on the unit circle; in other words, if and only if each $f_i(t)$ has its roots on the unit circle.
Using a change of variable $u = \sqrt{\zeta_i} t$, since $\zeta_i$ and hence  $\sqrt{\zeta_i}$ are roots of unity,
this occurs if and only if $f_i(u) = 1 - au + u^2$ has all of its roots on the unit circle, where $a = \delta_i \overline{\sqrt{\zeta_i}}$.
In fact, $a$ is real:  the equation $D = Z \overline{D}$
implies $\delta_i = \zeta_i \overline{\delta_i}$ from which we get
$\delta_i^2 = \zeta_i |\delta_i|^2$, hence $\delta_i = \pm \sqrt{\zeta_i} |\delta_i|$ and thus $a = \pm |\delta_i|$.   Then it is easy to show that $f_i(u)$ has all of its roots on the unit circle if and only if $a \in [-2, 2]$:
\begin{itemize}
\item if $a > 2$, then $(a + \sqrt{a^2 -4})/2 > a/2 > 1$;
\item if $a < -2$ then $(a - \sqrt{a^2-4})/2 < a/2 < -1$; and
\item if $|a| \leq 2$ then $(a \pm \sqrt{a^2 -4})/2 = (a \pm i \sqrt{4 - a^2})/2$ has modulus-squared equal to $(a + i \sqrt{4 - a^2})(a - i \sqrt{4 - a^2})/4 = (a^2 + (4 - a^2))/4 = 1$.
\end{itemize}

We conclude that $A$ has finite GK-dimension if and only if  $|\delta_i| \leq 2$ for all $i$, in other words if and only if $\rho(M) \leq 2$.
When $\GKdim(A) = \infty$, by Proposition~\ref{prop:exact} $A$ has exponential growth and is not noetherian.  Thus (3) is proved.

Suppose now that $A$ has finite GK-dimension.  Then every perfect graded $A$-module $M$ has a multiplicity $\eps(M)$ which is positive and an integer multiple of $\eps(\det q(t))^{-1}$, by Proposition~\ref{prop:perfect-hs}(3).
Let $1 = e_1 + \cdots + e_n$ be the fixed idempotent decomposition,
set $d_i = \eps(Ae_i)/|\eps(\det q(t))^{-1}|$ which is a positive integer for all $i$,
and let $v$ be the column vector $(d_1, \dots, d_n)^T$.  By Proposition~\ref{prop:gldim2},
the minimal projective resolution of the $i$th graded simple $Se_i$ has the form
\begin{equation}
\label{exact}
0 \to  A e_{\mu^{-1}(i)}(-2) \to  \bigoplus_{x} A e_{s(x)}(-1) \overset{\delta_1}{\lra} A e_i \to Se_i \to 0,
\end{equation}
where $x$ ranges over those arrows in $Q$ with target $t(x) = i$, and $s(x)$ is the source of $x$.
By Lemma~\ref{lem:strongconnected}(2), for all $i$ we have $\GKdim Ae_i = \GKdim A = c$, say.
By Lemma~\ref{lem:rational series}, $c$ is the order of the pole at $t = 1$ of $h\tot_{Ae_i}(t)$, so
we can write its Laurent expansion in powers of $(1-t)$ as
\[
h\tot_{Ae_i}(t) = \eps(Ae_i) (1-t)^{-c} + a_{-c+1} (1-t)^{-c+1} + \cdots.
\]
In our setting where the weights of the arrows in $Q$ are all $1$, by Proposition~\ref{prop:cygk}(2)
we get $h_{Ae_{\mu(i)}}(t) = P h_{Ae_i}(t)$ as vector Hilbert series, and thus
$h\tot_{Ae_{\mu(i)}}(t) = h\tot_{Ae_i}(t)$.
Thus $d_i = d_{\mu(i)}$ for all $i$.  This shows that $Pv = v$.

Suppose that $c = 0$, so $A$ has GK-dimension $0$ and thus $\dim_k A < \infty$.  If $A = A_0 \cong k^n$ then $A$ does not have global dimension $2$, a contradiction.  Thus $A$ has elements of positive degree, and so $A$ has a nonzero socle, contradicting Lemma~\ref{lem:cy2-criterion}.  Thus  $c \geq 1$.  Then if $K = \ker(\delta_0)$, we see that $\eps(K) = \eps(Ae_i)$,
and by Proposition~\ref{prop:perfect-hs}(4),
\[
\eps(A e_{\mu^{-1}(i)}(-2)) + \eps(K) = \eps(\bigoplus_{x} A e_{s(x)}(-1)).
\]
Also, it is easy to check that for any perfect module $M$, $\eps(M) = \eps(M(m))$ for any shift $m$.
Thus we obtain the equation
\[
\sum_{\{x \mid t(x) = i\}} d_{s(x)} = d_{\mu^{-1}(i)} + d_i = 2d_i.
\]
Since there are $m_{ij}$ arrows from $i$ to $j$ in $Q$ where $M = (m_{ij})$, we have
$(M^T v)_i = \sum_j m_{ji} d_j  = 2d_i = 2v_i$ by the equation above,  and so $M^T v = 2v$.  Since $Pv = v$,  $Mv = PM^Tv = 2v$ as well.   This implies that in the diagonalized matrix polynomial
$U (q(t)) U^{-1} = \operatorname{diag}(f_1(t), \dots, f_n(t))$,
one of the polynomials $f_i(t)$ is equal to $1 - 2t + t^2 = (1-t)^2$.
By Proposition~\ref{prop:growth}, the GK-dimension of $A$ is the maximal order of $1$ as a pole of the rational
functions $(q(t)^{-1})_{ij}$.  Since $U$ is matrix of scalars, this is the same as the maximal order of $1$ as a pole of the rational functions
$(U (q(t)^{-1}) U^{-1})_{ij}$.  This is the same as the maximal order of $1$ as a pole of the functions $f_i(t)^{-1}$; since
the $f_i(t)$ are quadratic and we showed above that one of them vanishes twice at $1$, this maximal order is $2$.  Thus $c = \GKdim(A) = 2$.

We have now shown that if $\GKdim(A) < \infty$, then $\GKdim(A) = 2$ and in fact $M$ has an eigenvector $v$ whose
entries are all positive integers, corresponding to the eigenvalue $2$.  Since $v$ has all positive entries, it follows from the Perron-Frobenius theory that $\rho(M) = 2$.  (This also follows from the first part of the proof, which shows that $\rho(M) \leq 2$, so as soon as $2$ is an eigenvalue we have $\rho(M) = 2$).  It is shown in \cite[Theorem 6.6]{RR1} that locally finite
graded Calabi-Yau algebras of dimension $2$ with finite GK-dimension are noetherian.  Thus part (2) is proved.

Finally, part~(1) follows directly from both~(2) and~(3).
\end{proof}

\section{Twisted CY algebras of global dimension 3}
\label{sec:dim3}

In this last section, we attempt to extend the ideas of the previous sections to the case of twisted
Calabi-Yau algebras of dimension $3$.   Unsurprisingly, our results will be more partial.  In particular,
we will be able to give an explicit condition on the underlying quiver which is equivalent to the finite GK-dimension of
the algebra only in special cases.

First, we study the basic form of the generators and relations for such algebras.
Recall that it was shown in~\cite{Bo, BSW} that graded (twisted) Calabi-Yau algebras of dimension~$3$
that are generated in degree~1 are derivation-quotient algebras of (twisted) superpotentials.
In our general setting where generators and relations may occur in arbitrary degrees, it is not even clear
what the best definition of a ``twisted superpotential'' should be. However, we will show that a general
locally finite elementary twisted Calabi-Yau algebra of dimension~3 has a presentation that is similar to
a derivation-quotient algebra, with relations coming from an element that is similar to a twisted superpotential.

\begin{definition}
Let $Q$ be a weighted quiver, let $\mc{B} = \{x_j \}$ denote its standard arrow basis, and let $S = \bigoplus ke_i$
denote its standard vertex space.
An element $\omega \in kQ$ is said to be a \emph{twisted semipotential} if it satisfies the following properties:
\begin{itemize}
\item[(i)] there is a permutation $\mu$ of the vertices of $Q$ and integers $\ell_r$ such
that $\omega = \sum \omega_r$ with $\omega_r \in e_{\mu^{-1}(r)} kQ_{\ell_r} e_r$, or
equivalently, $\omega$ is a $\mu$-twisted weak potential for the corresponding automorphism $\mu$ of $S$;
\item[(ii)] there is another arrow basis $\mc{B}' = \{y_i \}$ such that $\omega = \sum_{i,j} y_i g_{ij} x_j$
for some homogeneous elements $g_{ij} \in kQ$;
\item[(iii)] the elements $h'_j = \sum_i y_i g_{ij}$ and $h_i = \sum_j g_{ij} x_j$ form two $S$-compatible minimal generating sets $\{ h'_j \}$ and $\{ h_i \}$ of the same ideal $I$.
\end{itemize}
We denote $A_3(Q, \omega) = kQ/I$, which is an elementary graded algebra.
\end{definition}

We begin by showing that every elementary locally finite graded twisted Calabi-Yau algebra of dimension~$3$ is of the form $A_3(Q, \omega)$
as above. The following proof closely follows the method of the corresponding result for dimension $2$ twisted Calabi-Yau algebras
in Proposition~\ref{prop:gldim2}.

\begin{proposition}
\label{prop:gldim3}
Let $A = kQ/I$ be a locally finite graded elementary twisted Calabi-Yau algebra of dimension $3$, and assume
all of the conventions and notation as in the paragraph before Proposition~\ref{prop:gldim2}.  In particular, let
$\mu$ be the action of the Nakayama automorphism of $A$ on the vertices of $Q$ and let
$\underline{\ell} = (\ell_1, \dots, \ell_n)$ be the AS-index. Let $\mc{B} = \{x_i \}$ be the standard arrow basis of $kQ$ with span $V$.
\begin{enumerate}
\item There exists another arrow basis $\mc{B}' = \{y_i \}$ with span $W$, and homogeneous elements $\{ g_{ij} \}$ in $kQ$
such that $\omega = \sum_{i,j} y_i g_{ij} x_j$ is a twisted semipotential and $A = A_3(Q,\omega)$.

\item The minimal graded projective resolution of the left simple module $S e_r$ has the form
\begin{align}
\begin{split} %single equation number
\label{eq:res3}
0 \to  Ae_{\mu^{-1}(r)}(-\ell_r) \overset{(\overline{y_i})}{\lra} \bigoplus_{i = 1}^b Ae_{m_i}(-s_i) 
&\overset{(\overline{g_{ij}})}{\lra} \bigoplus_{j = 1}^p Ae_{k_j}(-d_j) \cdots \\
\cdots &\overset{(\overline{x_i})^T}{\lra} Ae_r \to Se_r \to 0,
\end{split}
\end{align}
where $\{x_j \}$ ranges over $\mc{B}_r = \mc{B} \cap kQe_r$, $\{y_i \}$ ranges over $\mc{B}'_r = \mc{B}' \cap e_{\mu^{-1}(r)} kQ$, and $\sum_{i,j}  y_i g_{ij} x_j = \omega_r$.

\item The matrix-valued Hilbert series of $A$ is given by $h_A(t) = q(t)^{-1}$,
where
\[
q(t) = I - N(t) + Pt^L N(t^{-1})^T  - Pt^L
\]
with $N(t)$ the weighted incidence matrix of $Q$, $P$ the permutation matrix corresponding to $\mu$, and $L = \on{diag}(\ell_1, \dots, \ell_n)$.
Furthermore, $Pt^L$ commutes with $- N(t) + Pt^L N(t^{-1})^T$.
\end{enumerate}
\end{proposition}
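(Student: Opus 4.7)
The plan is to closely parallel the proof of Proposition~\ref{prop:gldim2}, replacing the two-step resolution by a three-step one. First I would apply Proposition~\ref{prop:cygk}(1) with $d = 3$ to force the shape of the minimal projective resolution of $Se_r$: the identity $n(Se_r, i, j, m) = n(Se_{\mu(j)}, 3-i, r, \ell_{\mu(j)} - m)$ specialized at $i = 0$ shows that the single rank-one top term $P_0 = Ae_r$ produces a single rank-one bottom term $P_3 = Ae_{\mu^{-1}(r)}(-\ell_r)$. By Lemma~\ref{lem:minres}(2), the resolution then takes the form displayed in~\eqref{eq:res3}, with $\{x_j\} = \mc{B} \cap kQe_r$ the arrow basis at $r$, with $h_i = \sum_j g_{ij}x_j$ a minimal $S$-compatible set of relations ending at $r$, and with the top differential given by right multiplication by some row vector $(\overline{y_1}, \dots, \overline{y_b})$, where the $y_i \in e_{\mu^{-1}(r)}(kQ)e_{m_i}$ are chosen homogeneous lifts of the appropriate degrees.

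Next I would apply $\Hom_A(-, A)$ to the deleted resolution and shift by $-\ell_r$. Exactly as in the proof of Proposition~\ref{prop:gldim2}(1), generalized AS regularity (Theorem~\ref{thm:cy-vers-asreg}) identifies the resulting complex with the minimal projective resolution of the right simple $e_{\mu^{-1}(r)}S$. The right-sided version of Lemma~\ref{lem:minres} then delivers two conclusions simultaneously: the collection $\{y_i\}$, as $r$ varies, spans a right-sided arrow space, hence forms a second arrow basis $\mc{B}'$ of $kQ$; and the elements $h'_j = \sum_i y_i g_{ij}$ form a minimal $S$-compatible generating set for the two-sided ideal $I$. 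Since the complex~\eqref{eq:res3} is a differential, we have $\sum_i \overline{y_i}\,\overline{g_{ij}} = 0$ in $A$, i.e.\ $h'_j \in I$, so $\omega_r := \sum_{i,j} y_i g_{ij} x_j = \sum_j h'_j x_j = \sum_i y_i h_i$ is a well-defined element of $I$. Summing over $r$ and tracking vertex and degree labels, $\omega = \sum_r \omega_r$ satisfies $\omega_r \in e_{\mu^{-1}(r)} (kQ)_{\ell_r} e_r$ and is therefore a twisted semipotential with $A_3(Q, \omega) = A$, proving parts~(1) and~(2).

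For part (3), Proposition~\ref{prop:perfect-hs} gives $h_A(t) = q(t)^{-1}$ with $q(t) = \sum_i (-1)^i H_i$, where $(H_i)_{jr} = \sum_m n(Se_r, i, j, m) t^m$. From the shape of the resolution one reads off $H_0 = I$, $H_1 = N(t)$, and $H_3 = Pt^L$; for the middle term, the duality applied at $i = 2$ gives $n(Se_r, 2, j, m) = n(Se_{\mu(j)}, 1, r, \ell_{\mu(j)} - m)$, and a short computation using $(Pt^L)_{jk} = \delta_{k,\mu(j)} t^{\ell_k}$ identifies $H_2 = Pt^L N(t^{-1})^T$. The final commuting assertion then follows from Proposition~\ref{prop:cygk}(2): since $q(t)$ commutes with $Pt^L$ and the trivial summands $I$ and $Pt^L$ also do, the remaining piece $-N(t) + Pt^L N(t^{-1})^T = q(t) - I + Pt^L$ commutes with $Pt^L$ as well. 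The main obstacle I anticipate is the bookkeeping of vertex, source/target, and degree data needed to confirm the precise indices in $\omega_r \in e_{\mu^{-1}(r)}(kQ)_{\ell_r}e_r$ and to ensure that the $h'_j$ produced by the right-sided Lemma~\ref{lem:minres} match (not merely up to reindexing) the elements $\sum_i y_i g_{ij}$ appearing on the other side of the twisted semipotential.
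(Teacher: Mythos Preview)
Your proposal is correct and follows essentially the same route as the paper's proof: use the duality $n(Se_r,i,j,m)=n(Se_{\mu(j)},3-i,r,\ell_{\mu(j)}-m)$ from Proposition~\ref{prop:cygk}(1) to pin down $P_3$, invoke Lemma~\ref{lem:minres}(2) for the shape of the resolution, dualize and apply generalized AS regularity via Theorem~\ref{thm:cy-vers-asreg} together with the right-sided Lemma~\ref{lem:minres} to produce the second arrow basis $\{y_i\}$ and the dual relations $h'_j$, and then read off $q(t)$ term-by-term with $H_2=Pt^LN(t^{-1})^T$ and deduce the commutation from Proposition~\ref{prop:cygk}(2). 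The only cosmetic difference is that the paper specializes the duality at $i=3$ rather than $i=0$, which is the same relation read in the other direction.
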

\begin{proof}
(1),(2)  As usual, write the minimal projective resolution of $Se_r$ as $M_{\bullet}$ with
$M_i = \bigoplus_{j,m} [ Ae_j(-m)]^{n(Se_r, i, j, m)}$.  By Proposition~\ref{prop:cygk},
\[
n(S e_r, i, j, m) = n(S e_{\mu(j)}, 3-i,  r , \ell_{\mu(j)}-m)
\]
so that taking $i = 3$, $M_3 = Ae_{\mu^{-1}(r)}(-\ell_r)$.
Then by Lemma~\ref{lem:minres}(2), the (deleted) resolution $M_{\bullet}$ looks like
\[
0 \to  Ae_{\mu^{-1}(r)}(-\ell_r) \overset{(\overline{y_i})}{\lra} \bigoplus_{i = 1}^b Ae_{m_i}(-s_i) \overset{(\overline{g_{ij}})}{\lra} \bigoplus_{j = 1}^p Ae_{k_j}(-d_j) \overset{(\overline{x_j})^T}{\lra} Ae_r \to 0,
\]
for some homogeneous elements $\{ y_i \}$ and $\{ g_{ij} \}$ in $kQ$, and where $\{ x_1, \dots, x_p \}= \mc{B} \cap kQe_r$, where the elements $h_i = \sum_j  g_{ij} x_j$ are such that
$\{h_1, \dots,  h_b \} = G \cap kQe_r$ for some $S$-compatible minimal set of relations
$G$ generating $I$.
We have $d_j = \deg(x_j)$ and $s_i = \deg(h_i)$.
Applying $\Hom_A(-, A)$ and shifting by $-\ell_r$ gives
\begin{align}
\begin{split} %single equation number
\label{eq:gldim3}
0 \leftarrow  e_{\mu^{-1}(r)}A \overset{(\overline{y_i})}{\longleftarrow} \bigoplus_{i = 1}^s e_{m_i}A(-\ell_r+s_i) 
&\overset{(\overline{g_{ij}})}{\longleftarrow} \bigoplus_{j = 1}^p e_{k_j}A(-\ell_r+ d_j) \cdots \\
\cdots &\overset{(\overline{x_j})^T}{\longleftarrow} e_rA(-\ell_r) \leftarrow 0
\end{split}
\end{align}
where the free modules are now column vectors and the maps are left multiplication by the matrices.
Since $A$ is twisted Calabi-Yau, it is also generalized AS-regular by Theorem~\ref{thm:cy-vers-asreg}.  Similarly as
in the proof of Proposition~\ref{prop:gldim2}, this implies that \eqref{eq:gldim3} is a deleted minimal graded projective resolution of
the right module $e_{\mu^{-1}(r)}S$.  By the right sided version of Lemma~\ref{lem:minres}, setting $\mc{B}'_r = \{ y_1, \dots, y_b \}$, then $\mc{B}' = \bigcup_r \mc{B}'_r$ is also
an arrow basis for $kQ$, with $\mc{B}'_r = \mc{B}' \cap e_{\mu^{-1}(r)}kQ$.   Also, writing $h'_j = \sum_{i=1}^b y_i g_{ij}$, then there must be a $S$-compatible minimal set $G'$ of relations generating $I$ such that
$G' \cap e_{\mu^{-1}(r)}kQ = \{h'_1, \dots, h'_p \}$.  In particular, the number $p$ of relations beginning at $\mu^{-1}(r)$ in a minimal set of decomposed relations for $I$ is the same as the minimal number of generators for $A$ ending at $r$.   All parts of (1) and (2) are now clear.

(3)  From Proposition~\ref{prop:perfect-hs},  we may write $h_A(t) = q(t)^{-1}$ where
$q(t) = (q_{jr}(t))$ for $q_{jr}(t) = \sum_{i,m} (-1)^i n(Se_r, i,j,m) t^m$.
Setting 
\[
H_i = (h^i_{jr}) \quad \mbox{where} \quad h^i_{jr} = \sum_{m} (-1)^i n(Se_r, i, j, m)t^m,
\]
we have $q(t) = H_0 - H_1 + H_2 - H_3$.
By the similar arguments as in Proposition~\ref{prop:gldim2}, we have
$H_0 = I$, $H_1 = N(t)$ is the weighted incidence matrix of $Q$, and $H_3 = Pt^L$.
Now using the equations $n(S e_r, 2, j, m) = n(S e_{\mu(j)}, 1,  r , \ell_{\mu(j)}-m)$,
the same method as in Proposition~\ref{prop:cygk} shows that $H_2 = Pt^L H_1(t^{-1})^T = Pt^L N(t^{-1})^T$.
Thus $q(t) = I - N(t) + Pt^L N(t^{-1})^T - Pt^L$ as claimed.  The matrix $Pt^L$ always commutes with $q(t)$
by Proposition~\ref{prop:cygk}(2), so clearly this means that $Pt^L$ commutes with the
sum of the middle terms $- N(t) + Pt^LN(t^{-1})^T$.
\end{proof}

The previous proposition shows that if $A$ is finitely graded twisted Calabi-Yau of global dimension $3$ then $A \cong A_3(Q, \omega)$ for some weighted quiver $Q$ and twisted semipotential $\omega$.

Note that the twisted semipotential $\omega$ above determines the permutation $\mu$
and list of integers $(\ell_1, \dots, \ell_n)$.
In contrast to the case of dimension $2$, given a weighted quiver $Q$, one expects
$A_3(Q, \omega)$ to be twisted Calabi-Yau only for certain semipotentials $\omega$, and the problem
of determining which semipotentials is difficult.
Remarkably, it has been solved in the case of connected graded Calabi-Yau algebras of dimension~3
generated in degree~1 in~\cite{MoSm} and~\cite{MoU}.

We now give analogs of Lemma~\ref{lem:strongconnected} and Lemma~\ref{lem:cy2-criterion}, though the results in the global dimension $3$ case are weaker.

\begin{lemma}
\label{lem:strongconnected3}
Let $A$ be a locally finite graded elementary twisted Calabi-Yau algebra of dimension 3.
Assume that $A$ is indecomposable, and let $Q$ be the underlying weighted quiver.
Then $Q$ is strongly connected.
\end{lemma}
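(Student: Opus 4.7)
The plan is to closely mirror the proof of the dimension~2 case in Lemma~\ref{lem:strongconnected}, using the explicit form of the minimal projective resolution of a simple module given by Proposition~\ref{prop:gldim3}. The core step is to show that whenever $Q$ has an arrow $k \to r$, there is a directed path from $\mu^{-1}(r)$ to $k$ in $Q$; once this is established, a short iteration and a connectedness argument finish the proof.

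For the core step, I would use that by Proposition~\ref{prop:gldim3}, the minimal graded projective resolution of $Se_r$ has the form~\eqref{eq:res3}, where the middle differential is the matrix $(g_{ij})$ appearing in the expansion $\omega_r = \sum_{i,j} y_i g_{ij} x_j$, with $\{x_j\}$ the arrows of $\mc{B}$ ending at $r$ and $\{y_i\}$ the arrows of $\mc{B}'$ starting at $\mu^{-1}(r)$. Moreover, the elements $h'_j = \sum_i y_i g_{ij}$ form part of a minimal $S$-compatible generating set for the ideal $I$, so each $h'_j \neq 0$. Writing $x_j \colon k_j = k \to r$ for the chosen arrow, the nonvanishing of $h'_j$ forces $g_{ij} \neq 0$ for some $i$. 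Since $g_{ij} \in e_{m_i} kQ e_{k_j}$ is a nonzero linear combination of paths, $Q$ contains a path $m_i \to k$; concatenating with the arrow $y_i \colon \mu^{-1}(r) \to m_i$ (which exists in $Q$ because $\mc{B}'$ is another arrow basis and hence has the same underlying quiver as $\mc{B}$) produces the required directed path $\mu^{-1}(r) \to k$.

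Next, I would derive that every arrow $k \to r$ in $Q$ also admits a reverse directed path $r \to k$. Every vertex $v$ has at least one incoming arrow: otherwise $Ae_v = k e_v$ would be simple, contradicting the fact that $Se_v$ has projective dimension exactly~$3$ by Theorem~\ref{thm:cy-vers-asreg}. So for every $v$, the core step applied to an arrow $u \to v$ and composed with that arrow gives a directed path $\mu^{-1}(v) \to v$ in $Q$. Choosing $n \geq 1$ with $\mu^{-n}(r) = r$ (possible since $\mu$ permutes a finite set of vertices) and chaining the paths $\mu^{-i-1}(r) \to \mu^{-i}(r)$ for $0 \leq i \leq n-1$ produces a directed path $r \to \mu^{-1}(r)$, which concatenates with the path $\mu^{-1}(r) \to k$ from the core step to give the desired $r \to k$.

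Finally, I would finish exactly as in the dimension~2 proof. Fix any vertex $r$ and let $Q'$ be the (maximal) strongly connected subquiver of $Q$ containing $r$. If $Q' \neq Q$, then since $Q$ is connected as an undirected graph by indecomposability of $A$ (Lemma~\ref{lem:quiver facts}(3)), there are adjacent vertices $h \in Q'$ and $j \notin Q'$ joined by some arrow of $Q$; whichever direction this arrow points, the previous paragraph supplies the reverse directed path, so $Q' \cup \{j\}$ is strongly connected, contradicting the maximality of $Q'$. Hence $Q' = Q$ is strongly connected. The main obstacle compared to the dimension~2 argument is the core step: in dimension~$2$, the map $\tau$ directly turns each incoming arrow at $r$ into an outgoing arrow at $\mu^{-1}(r)$, whereas here one only obtains a path of length $\geq 2$ via the semipotential $\omega_r$, and one must exploit the minimality of $\{h'_j\}$ to produce a nonzero $g_{ij}$.
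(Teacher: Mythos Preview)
Your proof is correct and follows essentially the same approach as the paper. The paper is slightly more streamlined in the core step---it simply notes that $h'_j \in e_{\mu^{-1}(r)} kQ e_k$ is a nonzero linear combination of paths, bypassing your decomposition via $g_{ij}$ and $y_i$---and then states that the remainder of the argument of Lemma~\ref{lem:strongconnected} goes through verbatim, which is exactly what you unpack in detail.
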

\begin{proof}
Fix a vertex $r$ and consider \eqref{eq:res3}.  Suppose that there is an arrow from $u$ to $r$
in $Q$, say the arrow $x_j$ in the fixed arrow space, so that $u = k_j$.  Then in the notation of Proposition~\ref{prop:gldim3},
$h'_j = \sum_i y_i g_{ij}$ is part of a minimal generating set of relations for $A$, and so it is a nonzero sum of paths from
$\mu^{-1}(r)$ to $u$.  In particular there is a path from $\mu^{-1}(r)$ to $u$, and composing with the original arrow we
get a path from $\mu^{-1}(r)$ to $r$ which goes through $u$.
The rest of the proof of Lemma~\ref{lem:strongconnected} now goes through verbatim.
\end{proof}

\begin{lemma}
\label{lem:cy3-criterion}
Let $A = A_3(Q, \omega)$ with its natural grading.  Let $\mu$ be the associated permutation of $\{1, \dots, n \}$, with corresponding
permutation matrix $P$ such that $P_{ij} = \delta_{\mu(i) j}$.  Let $(\ell_1, \dots, \ell_n)$ be the associated
set of integers, and let $L = \operatorname{diag}(\ell_1, \dots, \ell_n)$.  Let $N = N(t)$ be the weighted incidence matrix of $Q$.  Then the following
are equivalent:
\begin{enumerate}
\item $A$ is twisted Calabi-Yau of global dimension $3$;
\item  The matrix Hilbert series of $A$ satisfies
$h_A(t) = (I - N(t) + Pt^L N(t^{-1})^T  - Pt^L)^{-1}$, and $A$ has trivial
graded left and right Socle.
\end{enumerate}
\end{lemma}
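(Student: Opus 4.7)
The strategy closely parallels the proof of Lemma~\ref{lem:cy2-criterion}, using the candidate four-term complex \eqref{eq:res3} from Proposition~\ref{prop:gldim3} in place of the three-term complex there. The key insight will be to identify the kernel of the leftmost map of \eqref{eq:res3} with a piece of the right socle of $A$, and to use an Euler characteristic argument driven by the matrix Hilbert series to control the remaining homology.

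For (1)$\Rightarrow$(2), the Hilbert series identity is exactly Proposition~\ref{prop:gldim3}(3). To deduce $\rsoc(A) = 0$, I would use that the twisted Calabi-Yau hypothesis makes \eqref{eq:res3} a genuine minimal projective resolution, so its leftmost differential $\delta_3 = (\overline{y_i})$ is injective. The elements $\overline{y_i}$ span $e_{\mu^{-1}(r)}\overline{W}$, where $\overline{W}$ is an arrow space for $A$ (and thus generates $J(A)$ as a right ideal), yielding
\[
0 = \ker \delta_3 = \{c \in Ae_{\mu^{-1}(r)} \mid cJ(A) = 0\} = \rsoc(A) e_{\mu^{-1}(r)}.
\]
Varying $r$ gives $\rsoc(A) = 0$, and the symmetric argument applied to $A\op$ (which is elementary and twisted Calabi-Yau of dimension 3) yields $\lsoc(A) = 0$.

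For (2)$\Rightarrow$(1), I would show the candidate complex \eqref{eq:res3}---a complex by the definition of twisted semipotential---is an exact minimal graded projective resolution of $Se_r$. Lemma~\ref{lem:minres}(2) immediately forces $H_0 = Se_r$ and $H_1 = 0$, and the socle computation above forces $H_3 = \rsoc(A) e_{\mu^{-1}(r)} = 0$. Then, assembling the vector Hilbert series Euler characteristic equations for all $r$ into a single matrix identity and invoking the hypothesis $h_A(t)(I - N(t) + Pt^L N(t^{-1})^T - Pt^L) = I$, the matrix Hilbert series of $H_2$ and of $H_3$ must agree; combined with $H_3 = 0$, this forces $H_2 = 0$. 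Hence \eqref{eq:res3} is a length-3 minimal projective resolution, so $A$ has graded global dimension 3 and each $Se_r$ is perfect. Since $S \cong k^n$ is automatically separable, Lemma~\ref{lem:smooth separable} then gives that $A$ is homologically smooth.

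To conclude that $A$ is twisted Calabi-Yau, I would invoke the generalized AS regular characterization of Theorem~\ref{thm:cy-vers-asreg}(2) by applying $\Hom_A(-,A)$ to \eqref{eq:res3} and reading off the Ext groups. The resulting complex of graded right $A$-modules, after shifting by $-\ell_r$, matches the candidate minimal projective resolution of the right simple $e_{\mu^{-1}(r)}S$ built from the arrow basis $\mc{B}'$ and the $S$-compatible minimal generating set $\{h'_j\}$. The same analysis applied on the right (using $\lsoc(A) = 0$ in place of $\rsoc(A) = 0$ and the identity $q(t)h_A(t) = I$) shows this dual complex is also exact, giving $\Ext^i_A(Se_r, A) = 0$ for $i \neq 3$ and $\Ext^3_A(Se_r, A) \cong e_{\mu^{-1}(r)}S(\ell_r)$. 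The main technical hurdle will be the precise identification of the shifted dual complex with the right-sided candidate resolution, which requires carefully matching degrees using both the constraint $\deg(y_i) + \deg(h_i) = \ell_r$ and the relation $\ell'_j = \ell_{\mu(j)}$ between left and right AS-indices from Proposition~\ref{prop:graded bimodule}(2).
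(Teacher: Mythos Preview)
Your proposal is correct and follows essentially the same approach as the paper: set up the candidate complex, identify $\ker\delta_3$ with a piece of $\rsoc(A)$, use an Euler-characteristic matrix identity to kill the remaining homology, then dualize and repeat on the right to verify generalized AS~regularity. One small remark: your citation of Proposition~\ref{prop:graded bimodule}(2) for the degree-matching in the dual complex is misplaced, since in the (2)$\Rightarrow$(1) direction there is no Nakayama bimodule yet; the needed identity $\deg(y_i)+\deg(h_i)=\ell_r$ (and its analogue for $x_j,h'_j$) comes directly from the semipotential condition $\omega_r\in e_{\mu^{-1}(r)}(kQ)_{\ell_r}e_r$, which is all the paper uses.
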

\begin{proof}
By definition, there is another arrow basis $\mc{B}'$ such that $\omega  = \sum_{x_j \in \mc{B}} h_j x_j = \sum_{y_i \in \mc{B}'} y_i h'_i$
with both $\{h_j \}$ and $\{h'_i \}$ minimal sets of generators for the ideal of relations $I$ where $A = kQ/I$.
Then for each $r$ there a complex
\begin{align}
\begin{split}
\label{eq:gldim3res}
0  \to  Ae_{\mu^{-1}(r)}(-\ell_r) \overset{\delta_3 = (y_i)}{\lra} \bigoplus_{i = 1}^b Ae_{m_i}(-s_i) &\overset{(g_{ij})}{\lra} \bigoplus_{j = 1}^p Ae_{k_j}(-d_j) \cdots \\
\cdots &\overset{(x_j)^T}{\lra} Ae_r \to Se_r \to 0,
\end{split}
\end{align}
where $x_j$ runs over $\mc{B} \cap kQe_r$, $y_i$ runs over $\mc{B}' \cap e_{\mu^{-1}(r)}kQ$, and $\omega_r = \sum_{i,j} y_i g_{ij} x_j$.
By Lemma~\ref{lem:minres}, since the $h'_i = \sum_j g_{ij} x_j$ are the part ending at $r$ of a minimal generating set of relations,
the complex is exact except perhaps at the last two spots.  The same argument as in Lemma~\ref{lem:cy2-criterion}
shows that $\Ker \delta_3 = \rsoc(A) \cap Ae_{\mu^{-1}(r)}$ (up to shift).  If $A$ is twisted Calabi-Yau of global dimension~$3$, then Proposition~\ref{prop:gldim3} shows that $h_A(t) = (I - N(t) + Pt^L N(t^{-1})^T  - Pt^L)^{-1}$
and that \eqref{eq:gldim3res} is exact for all $t$, which forces $\rsoc(A) = 0$.  By symmetry we also have $\lsoc(A) = 0$, so
that $(1) \implies (2)$.

Conversely, if $(2)$ holds, then $\delta_3$ is injective in \eqref{eq:gldim3res} since $\rsoc(A) = 0$.  Let $H_r$ be the homology
at the $\bigoplus_{i = 1}^b Ae_{m_i}(-s_i)$ spot.   Let $E(t)$ be the matrix polynomial with $E(t)_{ij} = h_{e_i H_j}(t)$.  Then a similar argument as in Lemma~\ref{lem:cy2-criterion} shows that the matrix Hilbert series of $A$ satisfies
 $h_A(t)(I - N(t) + Pt^L N(t^{-1})^T  - Pt^L) = I + E(t)$.  Since we assume that $h_A(t) = (I - N(t) + Pt^L N(t^{-1})^T  - Pt^L)^{-1}$
holds, this forces $E(t) = 0$.  Thus the sequence \eqref{eq:gldim3res} is exact for each $t$.  Thus every $Se_r$ has a minimal projective resolution of length $3$, and it follows that $A$ has global dimension $3$, by \cite[Proposition 3.18]{RR1}.
Applying $\Hom_A(-, A)$ to the (deleted) projective resolution of $Se_r$ and shifting by $-\ell_r$ gives
a complex
\[
0 \leftarrow  e_{\mu^{-1}(r)}A \overset{(y_i)}{\longleftarrow} \bigoplus_{i = 1}^b e_{m_i}A(-\ell_r+s_i) \overset{(g_{ij})}{\longleftarrow} \bigoplus_{j = 1}^p e_{k_j}A(-\ell_r+ d_j) \overset{(x_j)^T}{\longleftarrow} e_rA(-\ell_r) \leftarrow 0
\]
where the free modules are column vectors and the maps are now left multiplications.  Using that the $\{ y_i \}$ are the span of
$\mc{B}' \cap e_{\mu^{-1}(r)}kQ$ for an arrow basis $\mc{B}'$, and that the $h_j = \sum_i y_i g_{ij}$ are the part of a minimal generating set of relations beginning at $\mu^{-1}(r)$, we see that
\begin{align*}
\begin{split}
0 \leftarrow e_{\mu^{-1}(r)}S \leftarrow  e_{\mu^{-1}(r)}A \overset{(y_i)}{\longleftarrow} 
 \bigoplus_{i = 1}^b e_{m_i}A(-\ell_r+s_i) \cdots \\
 \cdots \overset{(g_{ij})}{\longleftarrow}  \bigoplus_{j = 1}^p e_{k_j}A(-\ell_r+ d_j) \overset{(x_j)^T}{\longleftarrow} e_rA(-\ell_r) \leftarrow 0
\end{split}
\end{align*}
is exact by Lemma~\ref{lem:minres} except possibly at the final two terms; an analogous argument as above using that $\lsoc(A) = 0$ shows that the complex is exact.
This shows that
\[
\Ext^i(Se_r, A) \cong \begin{cases} 0 & i \neq 3 \\ e_{\mu^{-1}(r)}S(\ell_r) & i = 3 \end{cases}
\]
holds for all $r$.  By \cite[Theorem 5.2]{RR1}, this means that $A$ is generalized AS-regular, which is equivalent to the twisted Calabi-Yau condition in this elementary case by Theorem~\ref{thm:cy-vers-asreg}.
Thus $(2) \implies (1)$.
\end{proof}

To close, we turn our attention to results on the growth of twisted Calabi-Yau algebras of dimension~$3$.
The analysis is related to some interesting geometry of the complex plane.  Thus to start this section we review some preliminary results about the locations of the roots of certain special polynomials, using some results of Petersen and Sinclair \cite{PS}.

We first define some special regions of the complex plane.    Fix an integer $k \geq 3$ and define a function $h_k: \mb{R} \to \mb{C}$
by
\begin{align*}
h_k(\theta) &= [(k-1) \cos (\theta) + \cos((k-1)\theta)] + [(k-1) \sin(\theta) - \sin((k-1) \theta)] i \\
&= (k-1)e^{i \theta} + e^{-i (k-1)  \theta}.
\end{align*}
Let $H_k = h_k(\mb{R})$ be the image of $h_k$.  Then $H_k$ is a closed curve called a \emph{hypocycloid}. It can be formed by rolling a circle of radius 1 along the inside of a circle of radius $k$, and taking the curve traced out by a fixed point on the moving circle.  It is simple closed curve with $k$ cusps and $k$-fold rotational symmetry.  When $k = 3$, the curve is also known as a \emph{deltoid}, and when $k = 4$, an \emph{astroid}.  Let $\Omega_k$ be the closed region of $\mb{C}$ consisting of $H_k$, together with all points in the simply connected region inside of it.  See \cite[Figure 1]{PS} for a picture of $\Omega_3$.   The region $\Omega_k$
is clearly a star-domain in the sense that it is the union of all rays joining the origin to points of $H_k$.

To justify the claim that $\Omega_k$ has $k$-fold rotational symmetry, notice for all $\theta$ that
\begin{align*}
h(\theta + 2\pi/k) &= (k-1)e^{i \theta}e^{2 \pi i/k} + e^{-i (k-1) \theta}e^{-i2(k-1)\pi/k} \\
&= (k-1)e^{i \theta}e^{2 \pi i/k} + e^{-i (k-1) \theta}e^{2\pi i/k} \\
&= h(\theta) e^{2 \pi i/k}.
\end{align*}
Thus the image $H_k = h_k(\mb{R})$ of $h_k$ is invariant under rotation about the origin by $2 \pi/k$ radians, so the same is true of the star domain $\Omega_k$ which it bounds.

The results of \cite{PS} are not quite in the form we require. The next proposition extracts the information we need.
Given a real number $r > 0$, we write $r \Omega_k = \{ rz \mid z \in \Omega_k \}$ for the
dilation of the region $\Omega_k$ by $r$.

\begin{remark}\label{rem:boundary}
In the following proof, we will need to verify that two regions of the complex plane are equal using the following method.  Consider two closed subsets $X \subseteq Y \subseteq \C$ such that $\partial X = \partial Y$ and the interior of $Y$ is path-connected.
(Note that this hypothesis is satisfied for the regions $Y = \Omega_k$ above that are bounded by hypocycloids.)
If $X$ contains a point $p$ in the interior of $Y$, then $X = Y$. Indeed, because $X$ contains $\partial X = \partial Y$, to show that $X = Y$
it suffices to let $q \in \on{int}(Y)$ and deduce that $q \in X$. Assume toward a contradiction that $q \notin X$.
Let $f \colon [0,1] \to \on{int}(Y)$ be a path with $f(0) = p$ and $f(1) = q$. Then for $t = \inf\{a \in [0,1] \mid f(a) \notin X\}$,
one may show that $f(t) \in \partial X = \partial Y$. But this contradicts the fact that $f(t) \in \on{int}(Y)$.
\end{remark}

\begin{proposition}
\label{prop:hypo}
Let $a \in \C$.
\begin{enumerate}
\item If $f(x) = 1 - ax + \ol{a} x^2 - x^3$, then all of the roots of $f$ in $\mb{C}$ are on the unit circle if and only if $a \in \Omega_3$.  In particular if $a \in \mb{R}$, this happens if and only if
$a \in \Omega_3 \cap \mb{R} = [-1, 3]$.

\item If $f(x) = 1 - ax + \ol{a} x^3 - x^4$, then all roots of $f$ in $\mb{C}$ are on the unit
circle if and only if $a \in (1/2)\Omega_4$.    In particular if $a \in \mb{R}$, this happens if and only if
$a \in (1/2)\Omega_4 \cap \mb{R} = [-2, 2]$.
\end{enumerate}
\end{proposition}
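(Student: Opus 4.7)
The overall plan is to view the hypothesis ``all roots on the unit circle'' as a parametrization $\alpha_j = e^{i\theta_j}$ with prescribed product, thereby converting the relation between $a$ and the roots of $f$ into a trace-type sum over a constrained torus. The images of these constrained trace maps are precisely the hypocycloid regions studied by Petersen and Sinclair~\cite{PS}, which generalize Kac's classical theorem that the trace of $SU(n)$ fills out $\Omega_n$. Two ambient facts I will use throughout are that each $\Omega_k$ is invariant under complex conjugation (since $\overline{h_k(\theta)} = h_k(-\theta)$), and that $\Omega_k$ is a star domain with respect to the origin, so its real section is the segment between the extreme real points of $H_k$.

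For part~(1), the monic companion $x^3 - \overline{a}x^2 + ax - 1$ of $f$ is conjugate-reciprocal with $e_3(\alpha) = 1$. If all three roots lie on the unit circle, then writing $\alpha_j = e^{i\theta_j}$ with $\theta_1+\theta_2+\theta_3 \equiv 0 \pmod{2\pi}$, Vieta's formulas give $\overline{a} = \sum_j e^{i\theta_j}$, which is exactly the trace of a diagonal element of $SU(3)$. By Petersen--Sinclair this trace map has image $\Omega_3$, so $\overline{a} \in \Omega_3$, and $\overline{\Omega_3}=\Omega_3$ forces $a \in \Omega_3$; the converse is immediate by reversing the parametrization. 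The real section follows from $\mathrm{Im}\,h_3(\theta) = 2\sin\theta\,(1-\cos\theta)$, which vanishes only at $\theta=0,\pi$, so $H_3 \cap \mb{R} = \{3,-1\}$ and the star-domain property yields $\Omega_3 \cap \mb{R} = [-1,3]$. Part~(2) follows the same template: the monic companion $x^4 - \overline{a}x^3 + 0\cdot x^2 + ax - 1$ imposes $e_4(\alpha) = -1$ together with the extra condition $e_2(\alpha) = 0$. The analogous trace result from~\cite{PS}, applied to the resulting subvariety of the $SU(4)$-torus, identifies the image of the trace map with $(1/2)\Omega_4$; and the real section is computed from $\mathrm{Im}\,h_4(\theta) = 3\sin\theta - \sin(3\theta) = 4\sin^3\theta$, giving $H_4 \cap \mb{R} = \{4,-4\}$ and hence $(1/2)\Omega_4 \cap \mb{R} = [-2,2]$.

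The principal obstacle is in part~(2), namely establishing that the sharp region is $(1/2)\Omega_4$ rather than some larger necessary-condition region. The natural squaring reduction $\beta_j := \alpha_j^2$ uses $e_2(\alpha) = 0$ to identify $e_1(\alpha)^2 = \sum \beta_j$ together with $\prod \beta_j = 1$, which by Kac applied to $SU(4)$ yields only the necessary condition $a^2 \in \Omega_4$. But this is strictly weaker than $a \in (1/2)\Omega_4$ for $a$ of nontrivial phase: for example, at $\arg a = \pi/4$ the value $a^2$ points towards the cusp $4i$ of $\Omega_4$, so $a^2 \in \Omega_4$ permits $|a|\le 2$, whereas $(1/2)\Omega_4$ forces $|a|\le 1$ since a direct computation gives $|h_4(\pi/4)| = 2$. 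The discrepancy arises because the lift from $(\beta_j)$ back to $(\alpha_j)$ must simultaneously realize $\prod \alpha_j = -1$ and the specific value $e_1(\alpha) = \overline{a}$ (not merely $e_1(\alpha)^2 = \overline{a}^2$), and generic choices of square-root signs will fail this. Thus the sharp characterization demands the direct hypocycloid computation of~\cite{PS} rather than a reduction through the ordinary $SU(4)$ trace map.
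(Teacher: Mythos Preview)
Your treatment of part~(1) is correct and is actually more direct than the paper's. The paper performs a change of variable to reduce to a conjugate-reciprocal cubic, then uses \cite[Proposition~1.6]{PS} to identify the boundary of the admissible region with the multiple-root locus, computes that locus explicitly as $H_3$, and finally invokes a topological lemma (Remark~\ref{rem:boundary}) to pass from ``same boundary'' to ``same region''. You instead recognize immediately that the monic companion has $e_3=1$, so $e_1$ ranges exactly over the trace image of diagonal $SU(3)$, which is classically $\Omega_3$; the constraint $e_2=\overline{e_1}$ is automatic once the roots are unimodular with product~$1$, so both implications follow. This is a cleaner argument provided one is willing to cite the $SU(n)$ trace/hypocycloid theorem as a black box.

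Part~(2), however, has a genuine gap. You correctly set up the constraints $e_4=-1$ and $e_2=0$ and correctly diagnose that the squaring reduction only yields the weaker necessary condition $a^2\in\Omega_4$. But your resolution is to assert that ``the analogous trace result from~\cite{PS}, applied to the resulting subvariety of the $SU(4)$-torus, identifies the image of the trace map with $(1/2)\Omega_4$.'' That result is not in \cite{PS} in this form. What \cite{PS} provides is the statement that the full coefficient region $W_4\subset\mathbb{R}^3$ for conjugate-reciprocal quartics is closed and path-connected with boundary equal to the multiple-root locus. The extra constraint $e_2=0$ cuts $W_4$ by a plane, and one must then separately compute the multiple-root locus \emph{within that slice}. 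The paper does exactly this: it parametrizes the slice boundary by $(z-r)^2(z-s)(z-\overline{r^2 s})$ with the side condition $2rs+r^2\in\mathbb{R}i$ forcing the $z^2$-coefficient to vanish, solves for $s$, and verifies that the resulting curve in $b$ is $(\zeta/2)H_4$; only then does Remark~\ref{rem:boundary} give $X_4=(\zeta/2)\Omega_4$. Your third paragraph shows you understand precisely why a shortcut fails, but you have not supplied the computation that replaces it; the phrase ``the direct hypocycloid computation of~\cite{PS}'' points to the right method without carrying it out.
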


\begin{proof}
(1) A polynomial $h(x) = \sum_{i = 0}^n a_i x^i \in \mb{C}[x]$ of degree $n$ is called \emph{conjugate reciprocal} if
$h(x) = x^n \ol{h(1/\ol{x})}$, or equivalently if $a_i = \ol{a_{n-i}}$ for all $i$.  Although $f(x)$ is not conjugate reciprocal, it is after a certain change of variable.  Let $x = \zeta z$, where $\zeta$ is a primitive $6$th root of $1$.  Then
$f(x) = f_a(x) = 1 - a \zeta z + \ol{a} \zeta^2 z^2 - \zeta^3 z^3 = 1 + b z + \ol{b} z^2 + z^3$, where
$b = - a \zeta = a \zeta^4$.  Thus if $g(z) = g_b(z) = 1 + b z + \ol{b} z^2 + z^3$, then $g$ is conjugate reciprocal and
has all roots on the unit
circle if and only if $f$ does.  Let $X_3 = \{ b \in \mb{C} \mid g_b(z)\ \text{has all its roots on the unit circle} \}$.
Then $f_a(x)$ has all of its roots on the unit circle if and only if $a \zeta^4 \in X_3$, or $a \in \zeta^2 X_3$.
We will show that $X_3 = \Omega_3$.  Then since $\Omega_3$ is invariant under rotation by an angle of $2\pi/3$ radians, this
will show that $f_a(x)$ has all of its roots on the unit circle if and only if $a \in \zeta^2 \Omega_3 = \Omega_3$, as desired.

In \cite{PS}, the authors write $b = \rho w_1 + \rho w_2 i$ where $\rho = \sqrt{2}/2$ is a normalization factor and $w_1, w_2 \in \mb{R}$.   Then
\[
g_b(z) = g_{w_1, w_2}(z) = 1 + bz + \ol{b} z^2 + z^3 = 1 + (\rho w_1 + \rho w_2 i) z + (\rho w_1 - \rho w_2 i)z^2 + z^3.
\]
The set $W_3 = \{ (w_1, w_2) \in \mb{R}^2 \mid g_{w_1, w_2}\ \text{has all its roots on the unit circle} \}$ is certainly bounded, since the coefficients of $g_{w_1, w_2}$ are polynomial functions of its roots.    In fact $W_3$ is closed and path connected, and its
boundary is given by those $(w_1, w_2) \in W_3$ such that $g_{w_1, w_2}(z)$ has a
multiple root \cite[Proposition 1.6]{PS}.  Identifying $\mb{R}^2$ with $\mb{C}$, this implies that $X_3$ is closed and path-connected and its boundary is given by those $b \in \mb{C}$ such that $g_b(z)$ has all of its roots on the unit circle and a multiple root.   This happens if and only if
\[
1 + bz + \ol{b} z^2 + z^3 = (z-r)(z-r)(z + \ol{r^2})
\]
for some $r$ on the unit circle, and so $b = -2\ol{r} + r^2$.  Setting $r = e^{i \theta}$, this gives
$b = e^{2i \theta} - 2 e^{-i \theta}$.  Reparametrizing by setting $\theta' = -\theta + \pi$, we have
$b = e^{-2i \theta'} + 2 e^{i \theta'}$, which traces out the deltoid $H_3$ as $\theta'$ varies.
Thus $X_3$ is a closed bounded set with boundary $H_3$; also notice that $0 \in X_3$,
so $X_3$ contains at least one point in
the interior of $\Omega_3$.  It follows as in Remark~\ref{rem:boundary} that $X_3 = \Omega_3$,
as claimed.  Finally, it is easy to see that $\Omega \cap \mb{R} = [1, 3]$.

(2) This is a similar argument as in (1).  We perform a change of variable to the polynomial
$f_a(x) = 1 - ax + \ol{a} x^3 - x^4$ as follows; letting $\zeta$ be a primitive $8$th root of unity
and setting $x = \zeta z$, $f_a(x)$ becomes $g_b(z) = 1 + bz + \ol{b} z^3 + z^4$,
where $b = - a \zeta$.  Let $X_4 = \{ b \in \mb{C} \mid g_b(z)\ \text{has all its roots on the unit circle} \}$.
Then $f_a(x)$ has all of its roots on the unit circle if and only if $- a \zeta \in X_4$, or $a \in \zeta^3 X_4$.
We will show that $X_4 = (\zeta/2) \Omega_4$.  Then since $\Omega_4$ is invariant under rotation by an angle of $\pi/2$ radians,
this will show that $f_a(x)$ has all of its roots on the unit circle if and only if $a \in (\zeta^4/2) \Omega_4 = -(1/2)\Omega_4 = (1/2)\Omega_4$, as desired.

Petersen and Sinclair study conjugate reciprocal
polynomials of degree $4$ of the form
\[
d_{w_1, w_2, w_3}(z) = 1 + (\rho w_1 + \rho w_2 i) z + w_3 z^2 + (\rho w_1  - \rho w_2 i) z^3 + z^4
\]
where $(w_1, w_2, w_3) \in \mb{R}^3$ and again $\rho = \sqrt{2}/2$.  Similarly as in part (1), the set
\[ W_4 = \{(w_1, w_2, w_3) \in \mb{R}^3 \mid d_{w_1, w_2, w_3}(z)\ \text{has all of its roots on the unit circle} \}
\]
is bounded, and it is a closed path connected region with boundary those points of $W_4$ where $d(z)$ has a multiple root \cite[Proposition 1.6]{PS}.
Let $W'_4$ be the intersection of $W_4$ with the plane $w_3 = 0$, which we identify with $\mb{R}^2$ with coordinates
$(w_1, w_2)$ and thus with $\mb{C}$ via $(w_1, w_2) \mapsto w_1 + w_2 i$.   Then clearly $X_4 = \rho W'_4$.
Note that a point $b$ on the boundary of $W'_4 \subseteq \C$ must also be a boundary point of $W_4$, and so corresponds
to a polynomial $g_b(z)$ with a multiple root.    We calculate the set of $b$ such that $g_b(z)$ has all of its roots
on the unit circle and a multiple root.  In this case $g_b(z) = (z- r)^2(z -s)(z - \ol{r^2 s})$ for some $r,s$ on the unit circle,
where $b = -2\overline{r} - \overline{s} - r^2s$, and where the coefficient of $z^2$ is $2rs + r^2 + 2\ol{rs} + \ol{r^2} = 0$, or
equivalently $2rs + r^2 \in \mb{R}i$.  Then $s = (\alpha i \ol{r} - r)/2$ for some $\alpha \in \mb{R}$, and so
\[
b = -2\overline{r} - \overline{s} - r^2s = -2\ol{r} +(\alpha i r)/2 + \ol{r}/2  - (\alpha i r)/2 + r^3/2
= -3 \ol{r}/2 + r^3/2.
\]
On the other hand, the curve $H_4$ is traced out by $3r' + (\ol{r'})^3$ as $r'$ varies over the unit circle.  Thus
$(\zeta/2) H_4$ is traced out by
\[
(3/2)(\zeta r') + (1/2)(\zeta^3 \ol{r'})^3 = (3/2)(\zeta r') - (1/2)(\zeta^{-1} \ol{r'})^3 =
(3/2)(\zeta r') - (1/2)(\ol{\zeta r'})^3.
\]
It is now clear via the substitution $\ol{r} = - \zeta r'$ that as $r$ varies over the unit circle, $b = -3 \ol{r}/2 + r^3/2$ traces out $(\zeta/2) H_4$.  Now $(\zeta/2) \Omega_4$ is equal to the simple closed curve $(\zeta/2) H_4$ together with its interior.  We have seen that $X_4$ is a closed bounded region and that any point on the boundary of $X_4$ must be a point of $(\zeta/2) H_4$.  Since $0 \in X_4$,
Remark~\ref{rem:boundary} forces $X_4 = (\zeta/2) \Omega_4$, as we needed.  Again, it is easy to see that $\Omega_4 \cap \mb{R} = [-4, 4]$
and so $(1/2) \Omega_4 \cap \mb{R} = [-2, 2]$.
\end{proof}

Now suppose that $A$ is a locally finite graded elementary twisted Calabi-Yau algebra of global dimension $3$.  By Proposition~\ref{prop:gldim3}, $A \cong A_3(Q, \omega) = kQ/I$ for some weighted quiver $Q$ and twisted semipotential $\omega$.  We assume now that $A$ is indecomposable as an algebra, and that all arrows in $Q$ have weight one.  Thus by Proposition~\ref{prop:graded bimodule}, the Artin-Schelter index has $\ell_i = \ell$ for all $i$, for some fixed $\ell$.  Thus $\omega$ is homogeneous of degree $\ell$
and $I$ is generated by relations of degree $\ell -1$.  Let $P$ be the permutation matrix associated to the action of the Nakayama automorphism of $A$ on the vertices.   In this case the weighted incidence matrix $N$ of $Q$ has the form $N = Mt$ where $M$ is the usual incidence matrix of the quiver $Q$.  By Proposition~\ref{prop:gldim3},
\[
h_A(t) = (I + Mt + PM^Tt^{\ell-1} + Pt^{\ell})^{-1},
\]
and $P$ commutes with  $Mt + PM^Tt^{\ell-1}$.  Note that $\ell \geq 3$, since $\ell = \deg \omega$ where $\omega$ is of the form $\sum_{i,j} y_i g_{ij} x_j$.  Thus $P$ commutes separately with $Mt$
and $PM^Tt^{\ell-1}$, and it follows that $P$ commutes with both $M$ and with $M^T$.   However, there is no obvious reason
that $M$ must be a \emph{normal} matrix, that is, that $M$ commutes with $M^T$.

We would like to have a criterion for when $\GK(A) < \infty$ based on the eigenvalues of $M$, as we found in the dimension $2$ case
(where the criterion was $\rho(M) = 2$).  In the next result we achieve this in case $M$ is normal.   We are not sure how to characterize the GK-dimension of $A$ in terms of the matrix $M$ in the non-normal case.    The assumption in the next result that $\GKdim(A) \geq 3$ should not be necessary, but we don't see how to remove it.  In any case,  it is  conjectured that a locally finite graded twisted Calabi-Yau algebra of dimension $d$ with finite GK-dimension should have GK-dimension $d$.
\begin{proposition}
\label{prop:diagonalize}
Let $A$ be an indecomposable locally finite elementary graded twisted Calabi-Yau algebra of dimension $3$ whose underlying quiver $Q$ has arrows of weight $1$.  Retain the notation above, and assume that the incidence matrix $M$ of $Q$ is normal, so that $M$, $M^T$, and $P$
are pairwise commuting normal matrices.  Then $M$, $M^T$, and $P$ are simultaneously unitarily diagonalizable, say $UMU^{-1} = D, U M^T U^{-1} = \overline{D}, U P U^{-1} = Z$
for a unitary matrix $U$.  Let $D = \operatorname{diag}(\delta_1, \dots, \delta_n)$ and $Z = \operatorname{diag}(\zeta_1, \dots, \zeta_n)$.
Suppose that $\GKdim(A) \geq 3$. Then
\begin{enumerate}
\item If $\ell = 3$, $\GKdim(A) < \infty$ if and only if $\delta_i \in  (\sqrt[3]{\zeta_i} )\Omega_3$ for all $i$, where $\Omega_3$ is the deltoid described above; if this occurs, then $\GK(A) = 3$ and $\rho(M) = 3$.
\item If $\ell = 4$, then $\GKdim(A) < \infty$ if and only if $\delta_i  \in  (1/2)(\sqrt[4]{\zeta_i})\Omega_4$ for all $i$, where $\Omega_4$ is the astroid described above; if this occurs,
then $\GK(A) = 3$ and $\rho(M) = 2$.
\item If $\ell \geq 5$, then $\GKdim(A) = \infty$.
 \end{enumerate}
\end{proposition}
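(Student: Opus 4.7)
The plan is to exploit the simultaneous diagonalization to reduce the question to an individual analysis of the scalar polynomials
\[
f_i(t) = 1 - \delta_i t + \zeta_i \overline{\delta_i}\, t^{\ell-1} - \zeta_i t^\ell,
\]
which arise from $U q(t) U^{-1} = \diag(f_1(t),\ldots,f_n(t))$ where $q(t) = I - Mt + P M^{T} t^{\ell-1} - P t^{\ell}$ by Proposition~\ref{prop:gldim3}(3). By Proposition~\ref{prop:growth}(3), $\GKdim(A) < \infty$ if and only if every root of $\prod f_i(t)$ lies on the unit circle.

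For parts~(1) and~(2), I would substitute $t = \xi u$ with $|\xi| = 1$ and $\xi^\ell = \zeta_i^{-1}$, transforming $f_i(t)$ into the conjugate-reciprocal polynomial $1 - a u + \overline{a} u^{\ell-1} - u^\ell$ with $a = \delta_i \xi$; the required identity $\zeta_i \overline{\delta_i} \xi^{\ell-1} = \overline{a}$ reduces to $\zeta_i \xi^\ell = 1$, which holds. Proposition~\ref{prop:hypo}(1)--(2) then translates the unit-circle condition into $a \in \Omega_3$ for $\ell = 3$ and $a \in (1/2)\Omega_4$ for $\ell = 4$. Rotating back by $\xi^{-1}$ and using the $3$-fold (resp.\ $4$-fold) rotational symmetry of $\Omega_3$ (resp.\ $\Omega_4$) gives the criteria $\delta_i \in \sqrt[3]{\zeta_i}\,\Omega_3$ and $\delta_i \in (1/2)\sqrt[4]{\zeta_i}\,\Omega_4$, independent of the choice of $\ell$-th root.

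To pin down $\rho(M)$ and $\GKdim(A)$, I would apply Perron-Frobenius to $M$, which is irreducible by Lemma~\ref{lem:strongconnected3}, to obtain a positive eigenvector $v$ with $Mv = \rho v$. Since $M$ is real and normal with $M^T$ sharing its spectrum, $M^T v = \rho v$; and since $P$ commutes with $M$ and preserves the (one-dimensional) $\rho$-eigenline, $Pv = \lambda v$ for some eigenvalue $\lambda$ of $P$. As $P$ is a permutation and both $v$ and $Pv$ have positive real entries, $\lambda$ must be a positive real root of unity, hence $\lambda = 1$. So one diagonal factor is $f(t) = 1 - \rho t + \rho t^{\ell-1} - t^\ell$, and direct computation yields $f(1) = 0$, $f'(1) = (\ell-2)\rho - \ell$, $f''(1) = (\ell-1)f'(1)$, and $f'''(1) = -\ell(\ell-1) \neq 0$ whenever $f'(1) = 0$. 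Thus the multiplicity of $t=1$ as a root of $f$ is exactly $3$ when $\rho = \ell/(\ell-2)$ and exactly $1$ otherwise. More generally, for any $i$ the vanishing $f_i''(1) = (\ell-1)\zeta_i[(\ell-2)\overline{\delta_i} - \ell] = 0$ forces $\overline{\delta_i} = \ell/(\ell-2)$ (real), and then $f_i(1) = -\tfrac{2}{\ell-2}(1-\zeta_i) = 0$ forces $\zeta_i = 1$, returning us to the Perron configuration by uniqueness of the positive eigenvector. Hence every $f_i$ has $t = 1$ as a root of multiplicity at most $3$, with equality only for the Perron diagonal when $\rho = \ell/(\ell-2)$.

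Since each entry of $h_A(t) = U^{-1}\diag(1/f_i(t))U$ is a $\C$-linear combination of the $1/f_i(t)$, Proposition~\ref{prop:growth}(2) gives the bound $\GKdim(A) \leq \max_i m_i$ whenever $\GKdim(A) < \infty$, where $m_i$ is the multiplicity of $1$ in $f_i$. Combined with the hypothesis $\GKdim(A) \geq 3$, this forces $\max_i m_i = 3$ and hence $\rho = \ell/(\ell-2)$; since $\rho$ is an algebraic integer (eigenvalue of the integer matrix $M$), this rational value must be an integer, which occurs iff $\ell - 2 \mid 2$, iff $\ell \in \{3,4\}$. In those cases one obtains $\rho = 6 - \ell$ and $\GKdim(A) = 3$, proving~(1) and~(2); for $\ell \geq 5$, finite GK-dimension together with $\GKdim(A) \geq 3$ is impossible, so the dichotomy of Proposition~\ref{prop:growth}(3) gives $\GKdim(A) = \infty$, proving~(3). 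The most delicate step is the derivative calculation showing that triple vanishing of $f_i$ at $t = 1$ can occur only from the Perron configuration; this is the structural observation that links the algebraic-integer constraint on $\rho$ to the cutoff $\ell \leq 4$.
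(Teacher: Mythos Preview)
Your outline follows the paper's argument almost exactly: diagonalize $q(t)$, reduce to the scalar polynomials $f_i$, apply the change of variable and Proposition~\ref{prop:hypo} for the $\Omega_\ell$ criterion, and then use Perron--Frobenius together with a derivative count at $t=1$ to pin down $\GKdim(A)$ and $\rho(M)$. Your treatment of the derivative analysis is in fact a little cleaner than the paper's, since your computation $f''(1) = (\ell-1)f'(1)$ and $f'''(1) = -\ell(\ell-1)$ (when $f'(1)=0$) handles all $\ell$ uniformly, whereas the paper argues $\ell = 3$ and $\ell = 4$ separately.

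There is, however, one genuine gap. From $\max_i m_i = 3$ you correctly extract an index $i$ with $\delta_i = \ell/(\ell-2)$ and $\zeta_i = 1$, but your claim that this ``returns us to the Perron configuration by uniqueness of the positive eigenvector'' does not establish $\rho = \ell/(\ell-2)$. Perron--Frobenius tells you only that the Perron eigenvector is (up to scalar) the unique eigenvector with positive entries; it says nothing about the sign pattern of the simultaneous eigenvector attached to your $(\delta_i,\zeta_i) = (\ell/(\ell-2),1)$, so there is no a priori reason this eigenvector coincides with $v$. The paper closes this gap differently: once $\GKdim(A) < \infty$ is assumed, the $\Omega_\ell$ criterion you have already proved gives $|\delta_j| \le \ell/(\ell-2)$ for every $j$ (since the maximum modulus on $\Omega_3$ is $3$ and on $(1/2)\Omega_4$ is $2$), hence $\rho(M) \le \ell/(\ell-2)$; combined with the existence of the eigenvalue $\ell/(\ell-2)$, equality follows. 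You have all the ingredients for this fix in hand---you simply need to invoke the modulus bound coming from the hypocycloid region rather than the positive-eigenvector statement.
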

\begin{proof}
By Proposition~\ref{prop:gldim3}, $A$ has matrix-valued Hilbert series $h_A(t) = q(t)^{-1}$ where
$q(t) = I - Mt + PM^T t^{\ell-1} - Pt^\ell$.
Conjugating a matrix polynomial by a unitary matrix $U \in M_r(\mb{C})$ does not change its set of roots.  Thus in all parts we may consider a
diagonalized matrix polynomial $U q(t) U^{-1} = 1 - Dt + Z \overline{D}  t^{\ell-1} - Z t^{\ell} = \operatorname{diag}(f_1(t), \dots, f_n(t))$
where $f_i(t) = 1 - \delta_i t + \overline{\delta_i} \zeta_i t^{\ell-1} - \zeta_i t^{\ell}$.
By Proposition~\ref{prop:growth}(3), $\GKdim(A) < \infty$ if and only if all of the roots of $q(t)$
lie on the unit circle, which occurs if and only if each $f_i(t)$ has all roots on the unit circle.
If this holds, then using Proposition~\ref{prop:growth}(2), $\GKdim(A)$ is the
maximal order of the pole at $t =1$ among the entries of $q(t)^{-1}$.
This is the same as the maximal order of the pole at $t = 1$ among
the entries of $U q(t)^{-1} U^{-1}$, which is the maximal order of the pole at $t = 1$ among the $f_i(t)^{-1}$, or equivalently
the maximal order of vanishing at $t = 1$ among the $f_i(t)$.

For any fixed $\ell$th roots $\sqrt[\ell]{\zeta_i}$, performing the change of variable
$u =\sqrt[\ell]{\zeta_i} t$ to $f_i(t)$ yields $g_i(u) = 1 - au + \overline{a} u^{\ell-1} - u^{\ell}$
where $a =  \delta_i \overline{\sqrt[\ell]{\zeta_i}}$.   Then $f_i(t)$ has all roots on the unit circle if and only if $g_i(u)$ has
the same property.  It now follows from Proposition~\ref{prop:hypo} that
\begin{itemize}
\item if $\ell = 3$, then $g_i(u)$ has all of its roots on the unit circle if and only if
$a \in \Omega_3$, if and only if $\delta_i \in (\sqrt[3]{\zeta_i}) \Omega_3$;
\item if $\ell = 4$, then $g_i(u)$ has all of its roots on the unit circle if and only if $a \in (1/2) \Omega_4$, if and only if $\delta_i \in (\sqrt[4]{\zeta_i}/2) \Omega_4$.
\end{itemize}
(Note that the particular choice of primitive $\ell$th roots of $\zeta_i$ is irrelevant due to the
$\ell$-fold rotational symmetry of $\Omega_{\ell}$.)

Now suppose that $\GKdim(A) < \infty$, so that by Proposition~\ref{prop:growth}, all roots of $q(t)$ must indeed lie on the unit circle.
Since we assume $3 \leq \GK(A)$, some $f_i(t)$ has $1$ as a root to multiplicity at least $3$.  Then
\[
f''_i(t) = \overline{\delta_i} \zeta_i (\ell-1)(\ell-2) t^{\ell-3} - \ell (\ell-1) \zeta_i t^{\ell-2}
\]
also vanishes at $t = 1$, so that $\delta_i  = \overline{\delta_i} = \ell/(\ell-2)$.
But $\delta_i$ is an algebraic integer, since it is an eigenvalue of an integer matrix.
If $\ell \geq 5$, then $\ell/(\ell-2)$ is a rational number that is not an integer, and hence is not
an algebraic integer, a contradiction.  This proves (3).

Now, since $A$ is indecomposable, the underlying quiver $Q$ is strongly connected, by Lemma~\ref{lem:strongconnected3}.
Thus the incidence matrix $M$ of $Q$ is irreducible and the Perron-Frobenius theory applies.
Let $\rho = \rho(M)$ be its spectral radius; then $\rho$ is a positive real eigenvalue of $M$, say $M v= \rho v$, where $v$ is an eigenvector with all positive real entries, and where $v$ spans the $\rho$-eigenspace of $M$.  Since $P$ and $M$ commute, $v$ is also an eigenvector of $P$; since the entries of $v$ are positive real numbers and the eigenvalues of $P$ are roots of $1$, the only possible corresponding eigenvalue is $1$, so that $P v = v$.  Thus one of the polynomials, say $f_1(t)$, is equal to $1 - \rho t + \rho t^{\ell-1} - t^{\ell}$.

Suppose now that $\ell = 3$ and $3 \leq \GK(A) < \infty$.  As we saw above, one of the eigenvalues of $M$ is $\ell/(\ell-2) = 3$.
Also, every eigenvalue $\delta_i$ of $M$ satisfies $\delta_i  \in  (\sqrt[3]{\zeta_i} )\Omega_3$.  Since the maximum value of the
norm of an element in $\Omega_3$ is $3$, $\rho(M) = 3$.  Also $\GKdim(A)$ is the maximum order of vanishing at $t = 1$ among
the polynomials $f_i(t)$; since $3 \leq \GKdim(A)$ and the polynomials have degree $3$, $\GKdim(A) = 3$.
In fact $f_1(t) = 1-3t + 3t^2 - t^3 = (1-t)^3$.  Thus (1) is proved.

Finally, suppose that $\ell = 4$ and again that $3 \leq \GK(A) < \infty$.  We saw that $\ell/(\ell-2) = 2$ is an eigenvalue of $M$
above.   Again, every eigenvalue $\delta_i$ of $M$ satisfies $\delta_i  \in  (1/2)(\sqrt[4]{\zeta_i} )\Omega_4$.  Since the maximum value of the
norm of an element in $\Omega_4$ is $4$, this proves that $\rho(M) = 2$.  None of the $f_i(t)$ can be equal to $(1-t)^4$, which has a nonvanishing $t^2$ term.  Thus the maximal order of vanishing of the $f_i(t)$ at $t = 1$ is $3$, so $\GKdim(A) = 3$.
In fact, $f_1(t)  = 1- \rho t + \rho t^3 - t^4 = 1 - 2t + 2t^3 - t^4 = (1-t)^3(1+t)$.  Part (2) follows.
\end{proof}

An important special case of the result above is the following.

\begin{corollary}
\label{cor:diagonalize}
Keep the hypotheses and notation of Proposition~\ref{prop:diagonalize}.  Suppose that $M$ is symmetric and that $P = I_n$.
Let $\delta_1, \dots, \delta_n$ be the eigenvalues of $M$, which are all real.  Assume that $\GKdim(A) \geq 3$.  If $\ell = 3$,
then $\GKdim(A) < \infty$ if and only if $\delta_i \in [-1, 3]$ for all $i$, and if $\ell = 4$, then $\GKdim(A) < \infty$ if and only if $\delta_i \in [-2, 2]$
for all $i$.
\end{corollary}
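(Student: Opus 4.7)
The plan is to deduce this as a direct specialization of Proposition~\ref{prop:diagonalize}, using Proposition~\ref{prop:hypo} to compute the relevant intersections with the real line. First I would observe that the hypotheses of the corollary place us squarely in the setting of Proposition~\ref{prop:diagonalize}: since $M$ is symmetric it equals $M^T$, so it is automatically normal, and the matrices $M$, $M^T$, $P$ pairwise commute (with $P = I_n$ commuting trivially). Moreover, a real symmetric matrix is orthogonally diagonalizable with real eigenvalues, so the $\delta_i$ are all real. Since $P = I_n$, in any simultaneous unitary diagonalization we have $Z = I_n$, i.e.\ $\zeta_i = 1$ for every $i$.

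For $\ell = 3$, Proposition~\ref{prop:diagonalize}(1) gives $\GKdim(A) < \infty$ if and only if $\delta_i \in (\sqrt[3]{\zeta_i})\,\Omega_3$ for each $i$. With $\zeta_i = 1$, I may take $\sqrt[3]{\zeta_i} = 1$ (the $3$-fold rotational symmetry of $\Omega_3$ noted after its definition makes the choice of cube root irrelevant), so the condition simplifies to $\delta_i \in \Omega_3$. Because $\delta_i$ is real, this is equivalent to $\delta_i \in \Omega_3 \cap \mathbb{R}$, which equals $[-1,3]$ by Proposition~\ref{prop:hypo}(1). For $\ell = 4$, Proposition~\ref{prop:diagonalize}(2) gives $\GKdim(A) < \infty$ if and only if $\delta_i \in (1/2)(\sqrt[4]{\zeta_i})\,\Omega_4$, which with $\zeta_i = 1$ and the $4$-fold rotational symmetry of $\Omega_4$ reduces to $\delta_i \in (1/2)\Omega_4$; intersecting with $\mathbb{R}$ and applying Proposition~\ref{prop:hypo}(2) yields $\delta_i \in [-2,2]$.

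There is essentially no obstacle here beyond verifying that the hypotheses of Proposition~\ref{prop:diagonalize} are met and tracking the roots of unity through the statement; the real content of the corollary is already contained in Proposition~\ref{prop:hypo}, which identifies the intersections of $\Omega_3$ and $(1/2)\Omega_4$ with the real axis. The only thing worth double-checking is that replacing one choice of $\sqrt[\ell]{\zeta_i}$ by another does not change the set $(\sqrt[\ell]{\zeta_i})\,\Omega_\ell$, but this is exactly the $\ell$-fold rotational symmetry of $\Omega_\ell$ recorded in the discussion preceding Proposition~\ref{prop:hypo}.
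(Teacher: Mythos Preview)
Your proof is correct and follows the same approach as the paper: both deduce the result from Proposition~\ref{prop:diagonalize} together with the identification of $\Omega_3 \cap \mathbb{R}$ and $(1/2)\Omega_4 \cap \mathbb{R}$ from Proposition~\ref{prop:hypo}. The paper's proof is a single sentence invoking these intersections, while you have helpfully spelled out why the hypotheses of Proposition~\ref{prop:diagonalize} are met and why the roots of unity $\zeta_i$ may be taken to be $1$.
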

\begin{proof}
This follows from the intersections of $\Omega_3$ and $\Omega_4$ with $\mb{R}$ given in Proposition~\ref{prop:hypo}.
\end{proof}

We close with one final example.  Let $Q$ be a quiver with 2 vertices and three arrows of weight $1$ in each direction, so $Q$ has incidence matrix $M = \begin{psmallmatrix} 0 & 3 \\ 3 & 0 \end{psmallmatrix}$.  Then $M$ has eigenvalues $\{-3, 3\}$ and thus by Corollary~\ref{cor:diagonalize},
there is no locally finite graded Calabi-Yau $A$ of finite GK-dimension with semipotential of degree $3$ and with $A \cong kQ/I$.  This is despite $M$ having the correct spectral radius of $3$.

 %%%%%%%%%%%%%%%%%%%%%%%

\bibliographystyle{amsplain}
\bibliography{twistedcygk}

\end{document}